\documentclass[final,leqno]{siamltex704}

\usepackage{amsmath,amssymb}
\usepackage{amsfonts}
\usepackage{exscale}
\usepackage{graphicx}
\usepackage[pdftex]{thumbpdf}      
\usepackage[pdftex,                
pdfstartview=FitH,%
bookmarks=true,
bookmarksnumbered=true,
bookmarksopen=true,%
hypertexnames=false,
breaklinks=true,
  colorlinks=true,%
  linkcolor=blue,anchorcolor=blue,%
  citecolor=blue,filecolor=blue,%
  menucolor=blue]%
{hyperref}

\usepackage{etoolbox}  
\newtoggle{LONG}
\toggletrue{LONG} 

\makeatletter
\def\@cite#1#2{{\rm [}{{\rm#1}\if@tempswa , #2\fi}{\rm ]}}
\makeatother

\def\inv{^{-1}}

\newtheorem{algorithm}{Algorithm}[section]

\title{
Graph partitioning using matrix values for preconditioning symmetric positive definite systems
\thanks{This
work was supported in
part by Iowa State University under the contract DE-AC02-07CH11358 with
the U.S.  Department of Energy and by the Director, Office of Science,
Division of Mathematical, Information,
and Computational Sciences of the U.S. Department of Energy under
contract number DE-AC02-05CH11231 and by the U.S. Department of
Energy under the grant DE-FG-08ER25841.
The first two authors also benefited from resources from the
Minnesota Supercomputing Institute.}}

\author{
Eugene Vecharynski\thanks{Computational Research Division, 
Lawrence Berkeley National Laboratory, Berkeley, CA 94720
({\tt eugene.vecharynski@gmail.com).}}
\and  Yousef Saad\thanks{
Department of Computer Science and
Engineering, University of Minnesota, 200 Union Street S.E.,
Minneapolis, MN 55455, USA ({\tt saad@cs.umn.edu}).}
\and Masha Sosonkina\thanks{
Department of Modeling, Simulation and Visualization Engineering, Old Dominion University, Norfolk, VA 23529  ({\tt
msosonki@odu.edu}).}
}

\begin{document}
\maketitle

\setcounter{page}{1}


\begin{abstract}
Prior  to  the parallel  solution  of a  large  linear  system, it  is
required to perform a partitioning of its equations/unknowns. Standard
partitioning algorithms  are designed using the  considerations of the
efficiency of the parallel matrix-vector multiplication, and typically
disregard the information on the coefficients of the matrix.
This  information, however,  may have  a significant  impact  on the
quality  of  the  preconditioning  procedure used  within  the  chosen
iterative  scheme.   In  the  present  paper, we  suggest  a  spectral
partitioning algorithm,  which takes  into account the  information on
the matrix coefficients and  constructs partitions with respect to the
objective  of   enhancing  the   quality  of  the nonoverlapping   additive  Schwarz
(block Jacobi) preconditioning    for     symmetric    positive    definite    linear
systems. 
For a set of test problems with large variations in magnitudes of matrix coefficients,
our numerical experiments
demonstrate a
noticeable  improvement in  the convergence  of the  resulting solution
scheme when using the new partitioning approach.
\end{abstract}

\begin{keywords}
Graph partitioning, iterative linear system solution, preconditioning,
Cauchy-Bunyakowski-Schwarz (CBS) constant, symmetric positive definite,
spectral partitioning
\end{keywords}

\begin{AMS}
15A06, 65F08, 65F10, 65N22
\end{AMS}

%

\pagestyle{myheadings}
\thispagestyle{plain}
\markboth{EUGENE VECHARYNSKI, YOUSEF SAAD, AND MASHA SOSONKINA}{GRAPH PARTITIONING USING MATRIX VALUES} 

\section{Introduction}\label{sec:intro}

Partitioning of  a linear system  for its parallel  solution typically
aims  at   satisfying  two  standard   objectives:  \textit{minimizing
  communication  volume} and  \textit{maintaining load  balance} among
different  processors.  Both  of these  requirements are  motivated by
efficiency considerations  of  the  parallel  matrix-vector
product,  which lie  in the  heart of  the  iterative solution
methods.  Once performed  the partitioning  is then 
exploited to construct a parallel preconditioner---another
crucial ingredient which contributes to the overall performance of the
solver.   However,  the   quality  of   the  resulting
preconditioner may depend significantly on the given partitioning, which
generally targets the  efficiency   of  the   parallel  matrix-vector
multiplication, but   ignores its effect on  the resulting 
preconditioner. This preconditioner can be of poor quality
especially  in the cases  when the  coefficient matrices  have entries
with large variations in magnitudes.

For the purpose of obtaining an effective preconditioner, 
we suggest to remove the requirement on the communication volume
and, instead, consider partitionings that favor the quality of 
the preconditioner.
In particular, we focus on the \textit{nonoverlapping additive Schwarz (AS) preconditioners}
for symmetric positive definite (SPD) linear systems~\cite{DDSmith:96, DDWidlund:05}.
The proposed partitioning algorithm 
aims at optimizing the quality of the AS procedure by attempting to minimize
the condition number of the preconditioned matrix,
while maintaining a good load balance.  The new strategy is tested on
several linear systems that arise from discretizations
of partial differential equations (PDE's) with \textit{strongly varying} coefficients.

The choice of the nonoverlapping AS, which is a form of block-diagonal, or block Jacobi, preconditioning,
is motivated by several factors.
First, the procedure represents the simplest Domain Decomposition (DD) type
preconditioner, which is theoretically well-understood and is often of practical interest
due to its high degree of parallelism.
Second,
block-diagonal preconditioning constitutes an important element of a number of more
powerful preconditioning schemes, e.g., overlapping or multilevel Schwarz methods,
substructuring type algorithms, etc.
Our expectation is that
the partitions that improve the quality of the
nonoverlapping AS preconditioners 
are also capable of increasing the robustness of these preconditioning schemes.

The problem of partitioning a linear system $Ax=b$ is commonly
formulated in terms of the adjacency graph $G(A) = (V,E)$ of the coefficient matrix $A=\left(a_{ij}\right)$.
Here, $V = \left\{ 1, 2, \ldots, n \right\}$
is the set of vertices (nodes) corresponding to the equations/unknowns of the system,
and $E$ is the set of edges $(i,j)$, where $(i,j) \in E$ iff $a_{ij} \neq 0$.
Throughout, we assume that $A$ is SPD, i.e., $A = A^* \succ 0$, which, in particular, implies that
the graph $G(A)$ is undirected.

The \textit{standard} goal of graph partitioning is to partition $G(A)$ into $s$ subgraphs
$G_k = (V_k,E_k)$, where $V_k \subseteq V$ and $E_k \subseteq E$, such that
\begin{equation}\label{eqn:partn}
\bigcup_{k = 1,s} V_k = V, \qquad \bigcap_{k=1,s} V_k = \emptyset, \qquad |V_k| \approx n/s \; ,
\end{equation}
and the size of the edge cut between $G_k$ 
(i.e., the size of the set of edges whose end points are in different $G_k$) 
is minimized.
Equations and unknowns with numbers in $V_k$ are then typically mapped to the same processor;
$s$ corresponds to the total number of processors.
The requirement on the small edge cut aims at reducing the cost of communications related to the
parallel matrix-vector multiplication. The condition $|V_k| \approx n/s$
ensures load balancing.
We note that there are alternative models for graph partitioning, based, e.g.,
on bipartite graphs~\cite{Hendrickson.Kolda:00} or hypergraphs~\cite{Catalyurek.Aykanat:99}.
We do not consider these models in the present paper.

The graph partitioning problem is NP-complete.
However, there exist a variety of heuristics for solving the problem;
see, e.g.,~\cite{Fiduccia.Mattheyses:82, Goehring.Saad:94, Kernighan.Lin:70, Miller.Teng.Thurston.Vavasis:93, Pothen.Simon.Liou:90}.
Efficient implementations of partitioning routines are often based on multilevel algorithms,
e.g.,~\cite{Hendrickson.Leland_multilev:95, Karypis.Kumar:98},
and are made available in a number of graph partitioning software packages,
such as Chaco~\cite{Chaco:94}, JOSTLE~\cite{Jostle:07}, MeTiS~\cite{Metis:09}, SCOTCH~\cite{Scotch:08}, etc.

If   the  preconditioner   quality   becomes  an   objective  of   the
partitioning,  then  along  with  the adjacency  graph  $G(A)$, it  is
reasonable  to consider weights  $w_{ij}$ assigned  to  the edges
$(i,j)\in E$, where $w_{ij}$'s are determined by the coefficients of the
matrix $A$.  The corresponding algorithm  should then be able  to take
these weights into account and  properly use them to perform graph
partitioning.   An example  of such  an algorithm  has  been discussed
in~\cite{Saad.Sosonkina:99}.

Indeed, one may consider partitioning as part of the pre-processing
phase of the preconditioner set up.
Then  the  use  of  the
coefficients  of  $A$ at  the  partitioning  step,  e.g., through  the
weights $w_{ij}$, represents a natural option. This approach, however,
faces   a number of issues.  For  example, given  a preconditioning
strategy,  how does  one  assign  the weights?   What  are the  proper
partitioning  objectives? How  can  the partitioning  be performed  in
practice?

In the present work,
these questions
are addressed
for the  case of SPD
linear   systems  and  nonoverlapping AS preconditioners.
Our rationale is to
relate partitioning to the results of the convergence
theory for the Preconditioned Conjugate Gradient (PCG) method with
block-diagonal preconditioning.
More specifically,
we regard bipartitioning (i.e., partitioning of the graph into two parts) 
as an optimization problem that aims at minimizing
an  upper  bound  on   the  condition  number  of  the  preconditioned
matrix over all possible balanced bipartitions.
As a result, we derive a recursive bisection procedure that is built upon
a simple weighting scheme
and a \textit{modification} of the standard partitioning objective.

A straightforward approach for dealing with the partitioning problem of this paper
would be to assign edge weights as the magnitudes of the corresponding matrix entries
and apply a state-of-the-art partitioning algorithm.
While this heuristic indeed often improves convergence, it can be substantially outperformed by the new partitioning strategy as
will be demonstrated in our numerical experiments.
%

%


%
The presented algorithm relies on the computation of
eigenvectors corresponding to the smallest eigenvalues of a generalized eigenvalue problem,
which simultaneously involves weighted and standard graph Laplacians.
As such, the new strategy is a form of the \textit{spectral recursive bisection} (RSB)
based on a ``nonstandard'' eigenvalue problem.
%


Spectral graph partitioning has roots in the works of Fiedler~\cite{Fiedler:73, Fiedler:75}
and Donath and Hoffman~\cite{Donath.Hoffman:72, Donath.Hoffman:73}.
It is extensively used in many applications, including VLSI circuit design, data clustering,
image segmentation; see, e.g.,~\cite{DingMcut:01, Luxburg:07, Shi.Malik:00} and the references therein.
In the scientific computing community, the spectral partitioning
was popularized by Pothen et al. in~\cite{Pothen.Simon.Liou:90} and further
studied, e.g., in~\cite{Hendrickson.Leland:95, Simon:91, Spielman.Teng:96}.

Spectral graph partitioning is typically characterized by a good quality of the resulting
partitions. This can be attributed to the fact that spectral algorithms
utilize global information in contrast to 
combinatorial algorithms that typically rely on local information.
The common criticism of spectral partitioning is its large computational cost,
due to the eigenvalue calculations. 

Although efficient solution of a specific eigenvalue problem is not the focus
of this work, we favor the use of preconditioned eigensolvers. 
An attractive feature of this approach is that it splits the computation
into \textit{preconditioning phase} and \textit{outer iterations},
which are based on the matrix-vector multiplications. Ideally, one can expect that
the preconditioning scheme encapsulates multilevel strategies
(e.g., graph coarsening, uncoarsening, refinement)
inherent to the available combinatorial graph partitioners, combined with
only a few outer iterations. In our numerical experiments, however, for demonstration purposes,
we use Incomplete Cholesky (IC) preconditioning.


The partitioning problem addressed in this paper is closely related to the more traditional task
of \textit{constructing block-diagonal preconditioners}. Indeed, the latter also admits graph formulations
and relies on the use of the matrix coefficients; see, e.g.,~\cite{Fritzsche.Frommer.Szyld:07}.
The main difference, however, lies in the fact that, along with improving the preconditioning quality,
the presented partitioning scheme imposes the additional load balance constraint,
which is not required for the conventional block-diagonal preconditioning.

The paper is organized as following. In Section~\ref{sec:bdiag},
we briefly review several known results concerning the block-diagonal preconditioning for SPD matrices.
These results motivate the new partitioning scheme, introduced in Section~\ref{sec:cbs_partn}.
In Section~\ref{sec:numer}, we report on a few  numerical examples, where the presented approach is compared
to a state-of-the-art partitioning algorithm, MeTiS, with different weighting schemes.

\section{Block-diagonal preconditioning}\label{sec:bdiag}

Consider a block $2$-by-$2$ matrix
\begin{equation}\label{eqn:bk_sys}
A =
\left(
\begin{array}{cc}
 A_{11}      &  A_{12}   \\
 A_{12}^*        & A_{22} \\
\end{array}
\right)\;,
\end{equation}
where the diagonal blocks $A_{11}$ and $A_{22}$ are square of size $m$ and $(n-m)$, respectively;
the off-diagonal block $A_{12}$ is $m$-by-$(n-m)$.
Let $T$ be a block-diagonal preconditioner,
\begin{equation}\label{eqn:bk_prec}
T =
\left(
\begin{array}{cc}
 T_1      &  0   \\
 0        &  T_2 \\
\end{array}
\right)\;,
\end{equation}
where $T_j = T_j^* \succ 0$, $j = 1,2$.
The dimensions of $T_1$ and $T_2$ are same as those of $A_{11}$ and
$A_{22}$, respectively.

Since both $A$ and $T$ are SPD, the convergence of an iterative method
for $Ax=b$,  such as PCG,  is   fully  determined   by  the   spectrum   of  the
preconditioned matrix  $T\inv A$.  If no information on  the exact location
of eigenvalues  of $T\inv A$ is available, then  the worst-case convergence
behavior of PCG  is traditionally described in terms  of the condition
number  $\kappa(T\inv A)$, where  $\kappa(T\inv A)  =  \lambda_{\max}(T\inv A)  /
\lambda_{\min}(T\inv A)$  with  $\lambda_{\max}(T\inv A)$ and  $\lambda_{\min}(T\inv A)$
denoting   the   largest  and   the   smallest   eigenvalues  of   the
preconditioned  matrix, respectively.  The  question which arises is how we can
bound
$\kappa(T\inv A)$ for an arbitrary $A$ and a block-diagonal $T$.
The answer to this question is given, e.g.,
in~\cite[Chapter 9]{Axelsson:94}.
Below, we briefly state the main result.

\begin{definition}\label{def:cbs}
Let $U_1$ and $U_2$ be finite dimensional spaces, such that $A$ in~(\ref{eqn:bk_sys})
is partitioned consistently with $U_1$ and $U_2$.
The constant
\begin{equation}\label{eqn:cbsA}
\gamma = \max_{w_1 \in W_1, w_2 \in W_2} \frac{| (w_1, A w_2) |}{(w_1,A
w_1)^{1/2} (w_2,Aw_2)^{1/2}}\;,
\end{equation}
where $W_1$ and $W_2$ are subspaces of the form
\begin{equation}\label{eqn:cbsW}
W_1 = \left\{
u =
\left(
\begin{array}{c}
 u_1   \\
 \textbf{0}     \\
\end{array}
\right), u_1 \in U_1
\right\}, \
W_2 = \left\{
u =
\left(
\begin{array}{c}
 \textbf{0}   \\
 u_2     \\
\end{array}
\right), u_2 \in U_2
\right\}
\end{equation}
is called the \emph{Cauchy-Bunyakowski-Schwarz (CBS) constant}.
\end{definition}

In~(\ref{eqn:cbsA}), $(u,v) = v^* u$ denotes the standard inner product.
We note that $\gamma$ can be interpreted as a cosine of an angle between subspaces
$W_1$ and $W_2$.
Thus, since, additionally, $W_1 \cap W_2 = (\textbf{0} \ \textbf{0})^*$, it is readily seen that $0 \leq \gamma < 1$.
Also we note that $\gamma$ is the smallest possible constant satisfying the strengthened Cauchy-Schwarz-Bunyakowski
inequality $|(w_1, A w _2)| \leq \gamma (w_1, A w_1)^{1/2}(w_2, A w_2)^{1/2}$,
which motivates its name.



\begin{theorem}[\cite{Axelsson:94}, Chapter 9]\label{thm:A1}
If $T_1 = A_{11}$ and $T_2 = A_{22}$ in~(\ref{eqn:bk_prec}),
and $A$ in~(\ref{eqn:bk_sys}) is SPD, then
$\kappa(T\inv A) \leq (1 + \gamma)/(1 - \gamma)$.
\end{theorem}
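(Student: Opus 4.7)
The plan is to estimate the extreme eigenvalues of $T^{-1}A$ via the generalized Rayleigh quotient $R(x) = (x,Ax)/(x,Tx)$, which since $A,T\succ 0$ satisfies $\lambda_{\min}(T^{-1}A) = \min_x R(x)$ and $\lambda_{\max}(T^{-1}A) = \max_x R(x)$. I would decompose any nonzero $x \in \mathbb{C}^n$ uniquely as $x = w_1 + w_2$ with $w_1 \in W_1$ and $w_2 \in W_2$ (the subspaces of Definition~\ref{def:cbs}), so that $w_1$ carries the top block and $w_2$ the bottom block of $x$. The key observation, which exploits the choice $T_1=A_{11}$, $T_2=A_{22}$, is that $(w_j,Tw_j) = (w_j,Aw_j)$ for $j=1,2$ and $(w_1,Tw_2)=0$, so that
\[
(x,Tx) = (w_1,Aw_1) + (w_2,Aw_2),
\]
while expanding $A$ against $x$ gives
\[
(x,Ax) = (w_1,Aw_1) + 2\,\mathrm{Re}\,(w_1,Aw_2) + (w_2,Aw_2).
\]

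Next I would introduce $a=(w_1,Aw_1)^{1/2}$ and $b=(w_2,Aw_2)^{1/2}$ (note $a,b$ cannot both vanish for $x\neq 0$ because $A\succ 0$). The strengthened Cauchy-Schwarz inequality from Definition~\ref{def:cbs} yields $|\mathrm{Re}\,(w_1,Aw_2)| \leq |(w_1,Aw_2)| \leq \gamma ab$. Substituting these bounds into the ratio gives
\[
\frac{a^2 - 2\gamma ab + b^2}{a^2 + b^2} \;\leq\; R(x) \;\leq\; \frac{a^2 + 2\gamma ab + b^2}{a^2 + b^2}.
\]
A one-line AM--GM step ($2ab \leq a^2 + b^2$) then sharpens these to $1-\gamma \leq R(x) \leq 1+\gamma$ for every nonzero $x$, so $\lambda_{\min}(T^{-1}A) \geq 1-\gamma$ and $\lambda_{\max}(T^{-1}A) \leq 1+\gamma$, and the theorem follows by dividing.

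The only genuine subtlety is making sure the CBS bound is applied correctly to $\mathrm{Re}\,(w_1,Aw_2)$ rather than just $(w_1,Aw_2)$, and handling the edge cases where one of $a,b$ is zero (then $R(x)=1$ trivially, which is well inside $[1-\gamma,1+\gamma]$). Everything else is purely algebraic manipulation, and the fact that $0 \leq \gamma < 1$ (noted after Definition~\ref{def:cbs}) ensures that the lower bound $1-\gamma$ is strictly positive so the quotient $(1+\gamma)/(1-\gamma)$ makes sense as a bound on $\kappa(T^{-1}A)$.
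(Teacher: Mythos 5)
Your proof is correct. The paper does not actually prove Theorem~\ref{thm:A1} --- it states it as a citation to Axelsson's book --- and your argument is precisely the standard one from that source: splitting $x=w_1+w_2$ along $W_1\oplus W_2$, noting $(x,Tx)=(w_1,Aw_1)+(w_2,Aw_2)$ because $T_1=A_{11}$ and $T_2=A_{22}$, bounding the cross term by the strengthened Cauchy--Schwarz inequality, and squeezing the generalized Rayleigh quotient into $[1-\gamma,\,1+\gamma]$. The edge cases ($a$ or $b$ vanishing, and $1-\gamma>0$ from $0\leq\gamma<1$) are handled properly, so nothing is missing.
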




The bound given by Theorem~\ref{thm:A1} is sharp.
In the subsequent sections, we use this result to
derive a new partitioning algorithm.


\section{Partitioning using matrix coefficients}\label{sec:cbs_partn}
Given  decomposition $\left\{ V_k  \right\}_{k=1}^s$ of  the set  $V =
\left\{ 1,  2, \ldots, n \right\}$ (possibly  with overlapping $V_k$),
we  consider the AS  preconditioning for  an SPD  system  $Ax = b$.
The     preconditioning      procedure     is     given     in
Algorithm~\ref{alg:as}. By  $A(V_l,V_k)$ we denote a  submatrix of $A$
located at the intersection of  rows with indices in $V_l$ and columns
with  indices in $V_k$.   Similarly, $r(V_k)$  denotes the subvector of
$r$, containing entries from positions $V_k$.
In this section, we focus on the case where sets (subdomains)
$\left\{ V_k \right\}_{k=1}^s$ are nonoverlapping, i.e.,~(\ref{eqn:partn}) holds. This means that 
Algorithm~\ref{alg:as} gives a \textit{nonoverlapping} AS preconditioner.

\begin{algorithm}[AS preconditioner]\label{alg:as}
\emph{Input:} A, r, $\left\{ V_k \right\}_{k=1}^s$. \emph{Output:} $ w = T\inv r $.
\begin{enumerate}
\item For $k = 1, \ldots, s$, Do
  \item \hspace{0.2in}
         \emph{Set} $A_k := A(V_k,V_k)$, $r_k := r(V_k)$, \emph{and} $w_k = \textbf{0} \in \mathbb{R}^n$.
  \item  \hspace{0.2in}
        \emph{Solve} $A_k \delta  = r_k$.
  \item  \hspace{0.2in}
        \emph{Set} $w_k(V_k) := \delta$.
\item EndDo
\item $w = w_1 + \ldots + w_s$.
\end{enumerate}
\end{algorithm}
\vspace{0.1in}

%

Let $P$ be a permutation matrix
which corresponds to the reordering of $V$ according to the
partition $\left\{ V_k \right\}_{k=1}^s$, where
the elements in $V_1$ are labeled first, in $V_2$ second, etc. Then the AS preconditioner $T$,
given by Algorithm~\ref{alg:as},
can be written in the matrix form as
$ T = P^T \bar T P$,  where $\bar T = \mbox{blockdiag}\left\{A_1 \ldots, A_s\right\}$
and $A_k = A(V_k,V_k)$. Thus, Algorithm~\ref{alg:as} results in the block-diagonal, or block Jacobi,
preconditioner, up to a permutation of its rows and columns.


\subsection{Optimal bipartitions}\label{subsec:opt_bi}

Let $s=2$, so that~(\ref{eqn:partn}) corresponds to a \textit{bipartition}
\begin{equation}\label{eqn:partitIJ}
V = I \cup J, \qquad I \cap J = \emptyset,  \qquad |I|= |J| = n/2 \; .
\end{equation}
%
Here, we assume that $n$ is even.
This guarantees the existence of \textit{fully balanced}
bipartitions, such that vertex sets $I$ and $J$ are of the same size, $n/2$.
Similarly, we assume that each connected component of $G(A)$
also has an even number of vertices.
The above assumptions, however, will not be a restriction
for the practical algorithm described below.

The following theorem provides a relation between a given bipartition and $\kappa(T\inv A)$.
The theorem is a direct consequence
 of Theorem~\ref{thm:A1} and is based on the fact that
symmetric permutations preserve the spectra.

\begin{theorem}
Let $\left\{ I, J \right\}$ in~(\ref{eqn:partitIJ}) be a bipartition of $V$
(possibly unbalanced).
%
Let $T$ be the AS preconditioner for system $Ax=b$ with an SPD matrix $A$,
given by Algorithm~\ref{alg:as}, with respect to the bipartition $\left\{ I, J \right\}$.
Then,
\begin{equation}\label{eqn:kappa}
\kappa(T\inv A)  \leq \frac{1+\gamma_{IJ}}{1 - \gamma_{IJ}}\; ,
\end{equation}
where
\begin{equation}\label{eqn:cbsC}
\gamma_{IJ} = \max_{u \in W_I, v \in W_J} \frac{|(u,A v)|}{(u,A
u)^{1/2}(v, A v)^{1/2}}\; .
\end{equation}
The spaces $W_I$ and $W_J$ are the subspaces of $\mathbb{R}^n$ with dimensions $m$ and ($n-m$), respectively,
such that
\begin{equation}\label{eqn:WIJ}
W_I = \left\{ u \in \mathbb{R}^n \ : \ u(J) = \textbf{0} \right\}\; , \
W_J = \left\{ v \in \mathbb{R}^n \ : \ v(I) = \textbf{0} \right\}\; .
\end{equation}
\end{theorem}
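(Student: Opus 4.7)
The plan is to reduce this theorem directly to Theorem~\ref{thm:A1} via the permutation identity~(\ref{eqn:reorder}). The core observation is that the AS preconditioner $T$ produced by Algorithm~\ref{alg:as} for the bipartition $\{I,J\}$ is, up to a symmetric permutation, exactly the block-diagonal preconditioner treated in Theorem~\ref{thm:A1}, and condition numbers plus CBS constants are invariant under this reordering.

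First I would let $P$ be the permutation matrix that lists the indices of $I$ first and those of $J$ second, and set $\tilde{A} = PAP^T$. By~(\ref{eqn:reorder}), $T = P^T B P$ with $B = \mbox{blockdiag}\{A_{11},A_{22}\}$, where $A_{11} = \tilde{A}(1{:}m,1{:}m) = A(I,I)$ and $A_{22} = \tilde{A}(m{+}1{:}n,m{+}1{:}n) = A(J,J)$. Since $P^T = P^{-1}$, the matrix $T^{-1}A$ is similar to $B^{-1}\tilde{A}$, so $\kappa(T^{-1}A) = \kappa(B^{-1}\tilde{A})$. The matrix $\tilde{A}$ is SPD and is partitioned consistently with $B$, hence Theorem~\ref{thm:A1} applies and yields
\[
\kappa(T^{-1}A) \;=\; \kappa(B^{-1}\tilde{A}) \;\leq\; \frac{1+\tilde{\gamma}}{1-\tilde{\gamma}},
\]
where $\tilde{\gamma}$ is the CBS constant of $\tilde{A}$ with respect to the subspaces $W_1,W_2$ of the form~(\ref{eqn:cbsW}) with $U_1 = \mathbb{R}^m$, $U_2 = \mathbb{R}^{n-m}$.

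The remaining step is to identify $\tilde{\gamma}$ with $\gamma_{IJ}$ as defined in~(\ref{eqn:cbsC})--(\ref{eqn:WIJ}). This is a change of variables: for any $\tilde{w}_1 \in W_1$ and $\tilde{w}_2 \in W_2$, set $u = P^T \tilde{w}_1$ and $v = P^T \tilde{w}_2$. Then $u \in W_I$ and $v \in W_J$ because $P^T$ places the first $m$ entries into the positions indexed by $I$ and the last $n-m$ into positions indexed by $J$, so the respective zero blocks land in $J$ and $I$. This map $P^T : W_1 \oplus W_2 \to W_I \oplus W_J$ is a bijection and preserves all the relevant quadratic forms, since $(\tilde{w}_i, \tilde{A}\tilde{w}_j) = (P^T\tilde{w}_i, A P^T\tilde{w}_j) = (u_i, A u_j)$ (with $u_i = P^T \tilde{w}_i$). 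The three quantities in the ratio defining the CBS constant therefore transform identically, and the maxima coincide: $\tilde{\gamma} = \gamma_{IJ}$.

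The argument is essentially bookkeeping with $P$; I do not expect a genuine obstacle. The one place where care is needed is verifying that the subspaces transform correctly, i.e., that $P^T W_1 = W_I$ and $P^T W_2 = W_J$, which is immediate from how $P$ reorders coordinates but must be stated explicitly so the identification $\tilde{\gamma} = \gamma_{IJ}$ is unambiguous. Once that is in place, inserting this equality into the bound from Theorem~\ref{thm:A1} yields~(\ref{eqn:kappa}) and completes the proof.
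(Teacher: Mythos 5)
Your proposal is correct and follows essentially the same route as the paper's proof: reduce to Theorem~\ref{thm:A1} via the similarity $P(T\inv A)P^T = B\inv(PAP^T)$, then identify the CBS constant of the permuted matrix with $\gamma_{IJ}$ through the substitution $u = P^T w_1$, $v = P^T w_2$. The only cosmetic difference is that you state the verification $P^T W_1 = W_I$, $P^T W_2 = W_J$ explicitly, which the paper leaves largely implicit.
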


\iftoggle{LONG}{
\begin{proof}
For the given bipartition $\left\{ I, J\right\}$ in~(\ref{eqn:partitIJ}), the preconditioner $T$,
constructed by Algorithm~\ref{alg:as}, is of the form
\begin{equation}\label{eqn:prec2x2}
T = P^T B P, \
B =
\left(
\begin{array}{cc}
 A_{I}      & 0   \\
 0        & A_{J}
\end{array}
\right)\; ,
\end{equation}
where $A_{I} = A(I,I)$, $A_J = A(J,J)$, and $P$ is a
permutation matrix corresponding to the reordering of $V$ with
respect to the partition $\left\{ I, J\right\}$.
In particular, for any $x$, the vector $y=Px$ is such that $y = (x(I) \ x(J))^T$,
i.e., the entries of $x$ with indices in $I$ become the first $m$ components
of $y$, while the entries with indices in $J$ get
positions from $m + 1$ through $n$.

%
We observe that the condition number of the matrix $T\inv A$ is the same as the condition number
of the matrix $B\inv C$, where $C = P A P^T$
and $B$ in~(\ref{eqn:prec2x2}). Indeed, since a unitary
similarity transformation
\[
P(T\inv A)P^T = P(P^T B\inv P A) P^T = B\inv (P A P^T) = B\inv C\;,
\]
preserves the eigenvalues of $T\inv A$, we have
$\kappa(T\inv A) = \kappa(B\inv C)$,
where $\kappa(\cdot) = \lambda_{max}(\cdot)/\lambda_{min}(\cdot)$.

The matrix $C$ represents a symmetric permutation of $A$ with respect to the
given bipartition $\left\{ I, J\right\}$, and, thus, can be written in the $2$-by-$2$
block form,
\begin{equation}\label{eqn:C}
C = PAP^T=
\left(
\begin{array}{cc}
 A_{I}      &  A_{IJ}   \\
 A_{IJ}^*        & A_{J} \\
\end{array}
\right)\; ,
\end{equation}
where 
$A_{I} = A(I,I)$, $A_J = A(J,J)$, and $A_{IJ} = A(I,J)$.
Since $C$ is SPD and the preconditioner $B$ in~(\ref{eqn:prec2x2}) is the block diagonal of $C$,
we apply Theorem~{\ref{thm:A1}} to get the upper bound on the condition number $\kappa(B\inv C)$,
and hence bound~(\ref{eqn:kappa}) on $\kappa(T\inv A)$, where, according to Definition~\ref{def:cbs},
the CBS constant $\gamma \equiv \gamma_{IJ}$  is given by
\begin{eqnarray}
\nonumber \gamma_{IJ}        & = & \max_{w_1 \in W_1, w_2 \in W_2} \frac{| (w_1, C w_2) |}{(w_1,C w_1)^{1/2} (w_2, C w_2)^{1/2}} \\
\nonumber               & = & \max_{w_1 \in W_1, w_2 \in W_2} \frac{| (w_1, PAP^T w_2) |}{(w_1,PAP^T w_1)^{1/2} (w_2, PAP^T w_2)^{1/2}} \\
\nonumber               & = & \max_{w_1 \in W_1, w_2 \in W_2} \frac{|
(P^T w_1, AP^T w_2) |}{(P^T w_1, AP^T w_1)^{1/2} (P^T w_2, A P^T w_2)^{1/2}}\;.
\end{eqnarray}
The matrix $P^T$ defines the permutation that is the ``reverse'' of the one corresponding to $P$. Thus, the substitution
$u = P^T w_1$ and $v = P^T w_2$ leads to expression~(\ref{eqn:cbsC})--(\ref{eqn:WIJ}) for $\gamma_{IJ}$,
where the $W_I$ and $W_J$ contain vectors, which can have nonzero entries
only at positions defined by $I$ or $J$, respectively.
\end{proof}
}

The sharp upper bound~(\ref{eqn:kappa}) represents a meaningful indicator
of the preconditioner quality.
Thus, as an optimal bipartition, we can choose $\left\{ I, J \right\}$, such that
$(1+\gamma_{IJ})/(1-\gamma_{IJ})$ or, equivalently, the CBS constant $\gamma_{IJ}$,
is minimized.
More precisely, we define an optimal bipartition $\left\{ I_{opt},J_{opt} \right\}$
to be such that
\begin{equation}\label{eqn:opt}
\left\{ I_{opt},J_{opt} \right\}  =
\underset{
\begin{array}{c}
\scriptstyle{I, J \subset V = \left\{ 1, \ldots, n \right\}}\; ,  \\
\scriptstyle{|I| = |J| = \frac{n}{2}, J = V \setminus I}
\end{array}
}{\operatorname{argmin}}
\gamma_{IJ} \;,
\end{equation}
where
$\gamma_{IJ}$ is defined in~(\ref{eqn:cbsC}).

A straightforward solution of 
optimization problem~\eqref{eqn:opt} entails evaluating 
\eqref{eqn:cbsC} for a very large, namely
$n!/[(n/2)!]^2$, possible choices of the partitions $\left\{ I, J \right\}$.
While this indicates that the problem is likely to be NP-hard,
it is not clear whether an efficient solution to~\eqref{eqn:opt} can
be found.

Therefore, our idea is to replace~(\ref{eqn:opt})
by a related simpler problem, such that the minimizer of the latter
\textit{approximates} (in terms of smallness of the CBS constant)
$\left\{ I_{opt}, J_{opt}\right\}$
rather than determines it exactly.
%
Below, we discuss several approaches.

\subsection{The minimal averaged cut}\label{subsec:uv}
In order to obtain a simpler optimization problem, let us replace $\gamma_{IJ}$
in~(\ref{eqn:opt}) by some $\tilde \gamma_{IJ}$, such that
$\tilde \gamma_{IJ}$
captures information on $\gamma_{IJ}$ and is easy to compute.
We define $\tilde \gamma_{IJ}$ as following.

Given $\left\{ I, J \right\}$,
instead of maximizing the ratio
\begin{equation}\label{eqn:qty}
\frac{|(u,A v)|}{(u,A u)^{1/2}(v, A v)^{1/2}}\; ,
\end{equation}
as required for computing $\gamma_{IJ}$ in~(\ref{eqn:cbsC}),
we introduce a set of pairs
\begin{equation}\label{eqn:sample}
S = \left\{ (e_i, e_j) \ : \ i \in I, j \in J, a_{ij} \neq 0 \right\} \subset W_I \times W_J \; ,
\end{equation}
where $e_k \in \mathbb{R}^n$ denotes the unit vector with $1$
at position $k$,
and calculate~(\ref{eqn:qty}) on all $(e_i, e_j) \in S$.
Note that the cardinality of $S$ is equal to the size of the cut
between $I$ and $J$, further denoted by $\mbox{cut}(I,J)$.
The resulting values are averaged.
This gives a constant $\tilde{\gamma}_{IJ}$, such that
\begin{equation}\label{eqn:subopt}
\tilde{\gamma}_{IJ} = \frac{w(I,J)}{\mbox{cut}(I,J)}\;,
\end{equation}
where $w(I,J) = \sum_{i \in I, j \in J} w_{ij}$, $w_{ij} = |a_{ij}|/\sqrt{a_{ii} a_{jj}} \in (0,1)$.

The constant $\tilde \gamma_{IJ}$ 
has a transparent meaning in terms of the adjacency graph $G(A)$.
Assigning the weights $w_{ij}$ to the edges $(i,j)$,~(\ref{eqn:subopt})
can be interpreted as a ratio of the cut weight, $w(I,J)$,
to the cut size; or, equivalently, as the \textit{averaged} cut weight.
At the same time, $\tilde \gamma_{IJ}$ is closely related to $\gamma_{IJ}$. In
particular, $\tilde \gamma_{IJ} \leq \gamma_{IJ}$ for all $\left\{ I, J \right\}$.
Thus, we 
replace optimization problem~(\ref{eqn:opt}) by 
%
\begin{equation}\label{eqn:min_subopt}
\{ \tilde I_{opt}, \tilde J_{opt} \}  =
\underset{
\begin{array}{c}
\scriptstyle{I, J \subset V = \left\{ 1, \ldots, n \right\}}\; ,  \\
\scriptstyle{|I| = |J| = \frac{n}{2}, J = V \setminus I}
\end{array}
}{\operatorname{argmin}}
\tilde \gamma_{IJ} \;.
\end{equation}
%
The minimizer $\{ \tilde I_{opt}, \tilde J_{opt} \}$
is expected to approximate $\left\{ I_{opt}, J_{opt} \right\}$.
Formally,~(\ref{eqn:subopt})--(\ref{eqn:min_subopt}) represents 
the problem of graph bipartitioning,
where the targeted cut has the smallest, in average, weight.
We call such a cut \textit{the minimal averaged cut (Acut)}.

Minimization of objective~(\ref{eqn:subopt}) is achieved by bipartitions that
provide a balance between the two concurrent requirements on  
minimizing the cut weight and maximizing the number of edges included in the cut.
%
Thus,~(\ref{eqn:subopt})--(\ref{eqn:min_subopt}) gives cuts that include a relatively large number of 
``light-weighted'' edges.
As shown in our numerical experiments, for a number of systems arising from discretizations
of partial differential equations with strongly varying coefficients,
this allows us to recognize the boundaries of subregions that correspond to different coefficient magnitudes.
The resulting subdomains are experimentally shown to 
improve preconditioning quality.

%

\subsection{Spectral Acut computations}\label{subsec:spectr_Acut}
Optimization problem~(\ref{eqn:min_subopt}) is still hard to solve exactly
since it encompasses the original graph partitioning problem which is 
known to be NP-complete~\cite{SimTeng97}. 
Therefore, we address below the question of \textit{approximating} Acut
for practical applications.
We propose a spectral bipartitioning technique.

%
Let $p$ denote the indicator vector of size $n$,
with the components defined as
\begin{equation}\label{eqn:indicator}
p(k) = \left\{
\begin{array}{c}
\ 1,  \ k \in I\; , \\
-1, \ k \in J\; .
\end{array}
\right.
\end{equation}
Then, for a given $\left\{ I, J \right\}$,
\iftoggle{LONG}{
\begin{eqnarray}
\nonumber 4 w (I,J) & = & \sum_{(i,j) \in E} w_{ij} (p(i) - p(j))^2 = \sum_{(i,j) \in E} w_{ij}(p(i)^2 + p(j)^2) - 2 \sum_{(i,j) \in E} w_{ij} p(i) p(j) \\
\nonumber           & = & \sum_{i=1}^n d_w(i) p(i)^2 - \sum_{i,j = 1}^n w_{ij} p(i) p(j)\; ,
\end{eqnarray}
}{
\[
4 w (I,J) = \sum_{i=1}^n d_w(i) p(i)^2 - \sum_{i,j = 1}^n  w_{ij} p(i)p(j) \;,
\]
}
where $d_w(i) = \sum_{j \in N(i)} w_{ij}$ is the weighted degree of the vertex $i$; $N(i)$ denotes the vertices adjacent to $i$.
Thus, $w(I,J)$ can be written as a bilinear form, 
\begin{equation}\label{eqn:wcut}
4 w(I,J) = p^T L_w p, \qquad L_w = D_w - W\; ,
\end{equation}
where $D_w = \mbox{diag}(d_w(1), \ldots, d_w(n))$ is the weighted degree matrix and
$W = (w_{ij})$ denotes the weighted adjacency matrix.
%
Similarly,
setting $w_{ij} = 1$ for all edges $(i,j)$ of $G(A)$, we get the expression for $\mbox{cut}(I,J)$,
\begin{equation}\label{eqn:cut}
4 \mbox{cut}(I,J) = p^T L p, \qquad L = D - Q\; ,
\end{equation}
where $D$ is the degree matrix and $Q=(q_{ij})$ is the adjacency matrix.
The matrices $L_w$ and $L$ denote the weighted and unweighted \textit{graph Laplacians}.
Thus, given $\{I,J\}$,~(\ref{eqn:wcut}) and (\ref{eqn:cut}) allow representing~(\ref{eqn:subopt}) as a ratio
of two bilinear forms, i.e., $\tilde \gamma_{IJ} = p^T L_w p/p^T L p$.

Let us assume that $G(A)$ has $q \geq 1$ connected components $(V_l,E_l)$,
where $|V_l|$ are even.
We introduce vectors $z_1, \ldots, z_q$, such that
\begin{equation}\label{eqn:z}
z_l(k) = \left\{
\begin{array}{l}
1,  \ k \in V_l\; , \\
0, \ k \notin V_l\; ,
\end{array}
\right.
\end{equation}
i.e., the entries of $z_l$ corresponding to vertices in $V_l$ are $1$, and $0$ elsewhere.
Note that if $q=1$, then we obtain a single vector of ones.

Problem~(\ref{eqn:min_subopt}) can now be written as
\begin{equation}\label{eqn:rq_discrete}
\tilde p_{opt}  =
\underset{p}{\operatorname{argmin}}
\frac{p^T L_w p}{p^T L p}, \qquad p^T z_l = 0, \ \qquad l = 1,\ldots,q\; ,
\end{equation}
where the minimizer $\tilde p_{opt}$ is searched over all feasible indicator vectors.
The condition $p^T z_l = 0$ ensures that all the connected components
are bipartitioned into two equal-sized sets of vertices. Hence, $|I| = |J| = n/2$, as required.


In order to approximate solution of~(\ref{eqn:rq_discrete}), we relax the problem by
embedding it into the real space.
More specifically,
we consider the minimization
\begin{equation}\label{eqn:rq}
\min_{v \in \mathbb{R}^n}
\frac{v^T L_w v}{v^T L v}, \qquad v \in  \mbox{span}\{z_1,\ldots,z_q\}^{\perp},
\end{equation}
of the \textit{generalized Rayleigh quotient} on the orthogonal complement of the subspace spanned
by vectors $z_l$ in~(\ref{eqn:z}). We expect the minimizer of~(\ref{eqn:rq}) to provide an approximation
to the optimal indicator vector $\tilde p_{opt}$ from~(\ref{eqn:rq_discrete}).

Both $L_w$ and $L$ are symmetric positive semi-definite, with the dimension of the nullspace equal to the number of
connected components of $G(A)$. In particular,
\[
\mbox{null}(L_w) = \mbox{null}(L) = \mbox{span}\{z_1, \ldots, z_q\}.
\]
Thus, $v^T L_w v/v^T L v$ in~(\ref{eqn:rq}) is minimized on the orthogonal complement
of the nullspace of the two matrices, where both $L$ and $L_w$ are SPD.
This implies that the minimum in~(\ref{eqn:rq}) exists.
It is achieved on the eigenvector associated with smallest eigenvalue of the symmetric generalized eigenvalue problem
\begin{equation}\label{eqn:evp}
L_w v = \lambda L v, \qquad v \in  \mbox{span}\{z_1,\ldots,z_q\}^{\perp} \;.
\end{equation}

Solution of~(\ref{eqn:evp}) can be viewed as an analogue of the Fiedler vector~\cite{Fiedler:73, Fiedler:75}.
The bipartition is formed by assigning the indices of its $\lceil n/2 \rceil$ smallest components
to $I$ and the rest to $J$. As shown in our numerical experiments,
the eigenvector of~(\ref{eqn:evp}) may
deliver disconnected subdomains, even though the original graph is connected.
%
Note that the assumption on the even sizes of the vertex sets of $G(A)$
and its connected components is not restrictive any more, and is skipped
for relaxed problem~(\ref{eqn:evp}).

Finally, let us observe that if the weights $w_{ij}$ 
are the same for all edges, then the graph Laplacians $L_w$ and $L$
represent multiples of each other.
%
%
In this case, the solution of~(\ref{eqn:evp}) is given by multiple orthogonal
eigenvectors that
correspond to the only nonzero eigenvalue of multiplicity $n-q$,
i.e., the result of spectral Acut computations is highly uncertain.
Such a situation is an indicator of the fact that
the coefficient matrix $A$ has extremely regular behavior of its entries.
Therefore, if all $w_{ij}$ are the same, or only slightly different,
we suggest using \textit{standard} partitioning criteria. 

\subsection{The minimal weighted cut}\label{subsec:mincut}

We now relate the problem of minimizing the CBS constant to the
\textit{standard} objective for graph partitioning.
In particular, we consider replacing~(\ref{eqn:opt}) by minimization of
the cut weight under the load balance constraint.

Given a bipartition $\left\{ I, J \right\}$ in~(\ref{eqn:partitIJ}),
similarly to~(\ref{eqn:sample}),
we define the set
\begin{equation}\label{eqn:sample_mincut}
\bar S = \left\{ (e_i, e_j) \ : \ i \in I, j \in J \right\} \subset W_I \times W_J\; .
\end{equation}
%
Unlike~(\ref{eqn:sample}), (\ref{eqn:sample_mincut})
contains \textit{all} pairs $(e_i, e_j)$ with $i \in I$ and $j \in J$,
including those that correspond to $a_{ij}=0$.
Instead of maximizing~(\ref{eqn:qty}), as required to compute $\gamma_{IJ}$,
we evaluate~(\ref{eqn:qty}) on all $(n/2)^2$ pairs in~(\ref{eqn:sample_mincut})
and then find the average.
Thus, for a given $\{I,J\}$ we define a constant
$\bar{\gamma}_{IJ} \leq \gamma_{IJ}$, such that
\begin{equation}\label{eqn:subopt_mincut}
\bar{\gamma}_{IJ} =  \frac{4}{n^2} w(I,J)\; ,
\end{equation}
where $w(I,J)$ is the cut weight, defined as in~(\ref{eqn:subopt}).

Following the pattern of the previous subsections,
instead of~(\ref{eqn:opt}), we suggest solving optimization
problem
\begin{equation}\label{eqn:min_subopt_mincut}
\{ \bar I_{opt}, \bar J_{opt} \}  =
\underset{
\begin{array}{c}
\scriptstyle{I, J \subset V = \left\{ 1, \ldots, n \right\}}\; ,  \\
\scriptstyle{|I| = |J| = \frac{n}{2}, J = V \setminus I}
\end{array}
}{\operatorname{argmin}}
\bar \gamma_{IJ} \;,
\end{equation}
where $\{\bar I_{opt}, \bar J_{opt}\}$ is expected to approximate
an optimal $\{I_{opt},J_{opt}\}$ in~(\ref{eqn:opt}).
It is readily seen that~(\ref{eqn:subopt_mincut})--(\ref{eqn:min_subopt_mincut}) delivers the well-known
problem of graph partitioning, which aims at finding
equal-sized vertex sets $I$ and $J$ with the minimal cut weight.
The solution of~(\ref{eqn:subopt_mincut})--(\ref{eqn:min_subopt_mincut}) can be approximated
by an available graph partitioning scheme which admits edge weighting. In our numerical examples,
we use MeTiS.

The principal difference between~(\ref{eqn:subopt})--(\ref{eqn:min_subopt})
and~(\ref{eqn:subopt_mincut})--(\ref{eqn:min_subopt_mincut}) is in
that~(\ref{eqn:subopt_mincut})--(\ref{eqn:min_subopt_mincut}) delivers cuts of
a \textit{cumulatively} small weight, which may generally contain both ``heavy-weighted''
and ``light-weighted'' edges.
At the same time, cuts given
by~(\ref{eqn:subopt})--(\ref{eqn:min_subopt}) favor edges with small weights.
For example if $A$ is given by a PDE with jumps in coefficients,
such edges connect subdomains corresponding to different coefficient magnitudes, whereas
edges with large $w_{ij}$ are in the interior of these subdomains.
Therefore,
unless forced by the load balance constraint,
(\ref{eqn:subopt})--(\ref{eqn:min_subopt}) excludes the undesirable possibility of ``cutting''
inside jump subregions.



\subsection{Acut by recursive spectral bisection}\label{subsec:recur}

Let $\left\{ I, J \right\}$ be given by Acut described in
Subsection~\ref{subsec:spectr_Acut}.
A natural way to construct further partitions~(\ref{eqn:partn}) is to apply the bipartitioning process separately for
subgraphs of $G(A)$ corresponding to $I$ and $J$,
then to the resulting partitions and so on, until all the subpartitions are sufficiently small.
We summarize this in the following algorithm. 

\vspace{0.1in}
\begin{algorithm}[Acut-RSB($A$)]\label{alg:partn1}
\emph{Input:} $A$. \emph{Output:} Partition $\left\{ V_i \right\}$. 
\begin{enumerate}
\item \emph{Form} $G(A)$. \emph{Assign weights} $w_{ij} = |a_{ij}|/\sqrt{a_{ii} a_{jj}}$.
\item \emph{Construct} $L_w = D_w - W$ \emph{and} $L = D - Q$.
\item \emph{Find connected components} $\{E_l,V_l\}$. \emph{Define} $z_l$ \emph{in}~(\ref{eqn:z}).
\item \emph{Find the eigenvector associated with the smallest eigenvalue of}~(\ref{eqn:evp}).
\item \emph{Define} $\{I,J\}$ \emph{based on the computed eigenvector}.
\item \emph{Apply the algorithm recursively:} \\
\textbf{If} $|I| > \mbox{\em maxSize}$ \textbf{then} \emph{call Acut-RSB}($A(I,I)$), \textbf{else} \emph{return} $I$.\\
\textbf{If} $|J| > \mbox{\em maxSize}$ \textbf{then} \emph{call Acut-RSB}($A(J,J)$), \textbf{else} \emph{return} $J$. \\
\end{enumerate}
\end{algorithm}
\vspace{0.1in}

The parameter {\em maxSize} in Algorithm~\ref{alg:partn1}
is provided by the user,
and should be chosen to ensure that $|V_k| \approx n/s$.
%
The connected components in Step 3 can be detected by standard algorithms based on the
breadth-first search or the depth-first search~\cite{Cormen:2001}.
Note that weights $w_{ij}$ are the same on all levels of the
recursion. In practice, they are assigned only once, on the top level.

Clearly, if eigenvalue problem~(\ref{eqn:evp}) in step 4 is replaced by
computing the Fiedler vector of either $L_w v = \lambda v$ or $L v = \lambda v$,
then Algorithm~\ref{alg:partn1} reduces to the well-known RSB scheme; see, e.g.,~\cite{Saad:03}.
We suggest that before the recursive Acut-RSB calls in Step 6, one checks if
$A(I,I)$ and $A(J,J)$ indeed have large variations in magnitudes of their entries,
e.g., by assessing the variance of the coefficients.
If the entries of $A(I,I)$ or $A(J,J)$ exhibit a regular behavior, we recommend
invoking one of the \textit{standard}, unweighted, partitioning algorithms.

\section{Numerical results}\label{sec:numer}

%
In this section,
we apply Acut-RSB to several SPD test problems and
compare the partitioning results to more traditional approaches
delivered by the MeTiS algorithm with various weighting schemes.
For each test problem, we use the partitions produced by all 
algorithms to construct the AS preconditioners, which are then
supplied to the PCG iteration. We further refer to this solution
scheme as PCG--AS.

In our examples, we consider three types of MeTiS partitions.
The first type results from applying the algorithm to the \textit{unweighted}
adjacency graph, so that the coefficient information is completely skipped.
In contrast,
the other two types are obtained by running MeTiS for the weighted
graphs, where the weights are based on the matrix coefficients.
In particular, we use weighting schemes $y = \{ y_{ij} \}$ and
$t = \{ t_{ij}\}$, with the edge weights
$y_{ij} = \lceil \: \gamma |a_{ij}| / \sqrt{a_{ii} a_{jj}} \: \rceil$ and
%
$t_{ij} =  \lceil \: \delta |a_{ij}|  \: \rceil$.
Here, the parameter $\gamma$ is a large integer and $\delta >0$.
%
We denote the corresponding partitioning methods by MeTiS($y$) and MeTiS($t$),
respectively.
Note that a closely related weighting scheme has been used in~\cite{Have.Masson.Nataf.Szydlarski.Zhao:11}.

Let us remark that $y_{ij}$ can be written as $y_{ij} = \lceil \gamma w_{ij} \rceil$,
where $w_{ij}$ are the weights
in the CBS constant based partitioning formulations~(\ref{eqn:subopt})--(\ref{eqn:min_subopt})
and~(\ref{eqn:subopt_mincut})--(\ref{eqn:min_subopt_mincut}).
Thus, whereas Acut-RSB targets~(\ref{eqn:subopt})--(\ref{eqn:min_subopt}),
MeTiS($y$) is expected to approximate partitions in~(\ref{eqn:subopt_mincut})--(\ref{eqn:min_subopt_mincut}).
The constant $\gamma$ and the ``ceil'' operation in the definition of $y_{ij}$
are introduced to satisfy the MeTiS requirement on the integer weight values 
(recall that $w_{ij} \in (0,1)$).

In our numerical experiments, we have observed that MeTiS($y$) gives better results than
the RSB scheme based
on the Fiedler vector for $L_w v = \lambda v$, which approximates solution
of~(\ref{eqn:subopt_mincut})--(\ref{eqn:min_subopt_mincut}). 
Therefore, the latter is not reported in the examples below.

The choice of $t_{ij} = \lceil \: \delta |a_{ij}|  \: \rceil$ for MeTiS($t$) is motivated 
by the computational experience,
suggesting that each edge should be weighted with (essentially) the absolute value of the
corresponding matrix entry. While straightforward and well-known among 
practitioners,
we were not able to find direct references to such a weighting 
scheme in the literature.

In our tests, we define $t_{ij}$ with $\delta$ from $(0,1]$. This choice of $\delta$ ``damps'' the
magnitudes of $|a_{ij}|$. It ensures that the weighted cut sizes computed by MeTiS($t$) do not exceed
the upper limit of the integer data type size, in which case the behavior of the algorithm 
can be unpredictable. 
Similar to $y_{ij}$, the ``ceil'' operation is introduced to maintain integer
weights. Note that if $A$ is diagonally scaled, $y_{ij}$ and $t_{ij}$ coincide.

Throughout, the reported partitioning results produced by MeTiS($y$) and MeTiS($t$) correspond to the
best values of $\gamma$ and $\delta$ observed during our experiments. In all cases,
MeTiS has been invoked with the ``PartGraphRecursive'' option, which enables the \textit{recursive
bipartitioning}~\cite{Karypis.Kumar_kway:98} and guarantees the \textit{strict} load balance.
Note that all our experiments are performed in {\sc matlab}; the MeTiS library is accessed
through the MEX interface.

Unlike MeTiS, which represents a combinatorial partitioning technique,
the new Acut-RSB algorithm relies on computing eigenvectors.
In all of our tests, as an eigensolver, we use Locally Optimal Block Preconditioned Conjugate
Gradient (LOBPCG) method~\cite{Knyazev:01}.
Our choice has been motivated mainly by the fact that,
unlike the  Lanczos algorithm~\cite{Parlett:98},  LOBPCG can solve
\textit{generalized} eigenvalue problems, such as~(\ref{eqn:evp}),
without requiring
any factorization of the singular matrices $L_w$ and $L$.
Additionally, LOBPCG encapsulates preconditioning to accelerate convergence
and has a relatively modest storage requirement.

%

The LOBPCG algorithm is a form of a (block) three-term recurrence, which locally
optimizes the Rayleigh quotient; see~\cite{Knyazev:01} for more details.
Given a suitable SPD preconditioner, the method is known to be efficient
for large-scale eigenvalue computations.
In our experiments, we construct preconditioners using 
IC factorization
of matrices $L_w + \sigma I$ with a drop tolerance of $10^{-3}$.
To ensure that the IC procedure
is correctly applied to the SPD matrices,
the parameter $\sigma$ is assigned a small positive value. In particular, we set $\sigma = 0.1$.

In all LOBPCG runs, 
we choose a random initial guess from $\mbox{span}\{ z_1, \ldots, z_q \}^{\perp}$
and at each step project the preconditioned residuals to this subspace,
so that the LOBPCG iterations are kept in $\mbox{span}\{ z_1, \ldots, z_q \}^{\perp}$.
Note that the amount of storage and computations required to orthogonalize
against $\mbox{span}\{ z_1, \ldots, z_q \}^{\perp}$ does not depend on $q$, i.e.,
is the same as for the $n$-vector of all ones.

Finally, let us remark that the partitioning runtimes exhibited by MeTiS
have been notably lower than those of Acut-RSB. This can be attributed 
to the spectral nature of the new algorithm. Additionally, the runtime difference has been
exacerbated by the fact that, in contrast to the MEX-interfaced MeTiS library calls,
our Acut-RSB code is purely in {\sc matlab}. 

However, as demonstrated below, despite longer runtimes, Acut-RSB often leads 
to a significantly smaller number of iterations of the linear solver. 
In this context, the partitioning cost (i.e., the pre-processing phase of the preconditioner
set up) becomes less important. In fact, our goal is precisely this: to get a more reliable iterative
solution procedure by paying a higher cost in the preprocessing.

\paragraph{\textbf{\emph{$2$D diffusion equation}}}

Let us consider the diffusion equation on a unit square,
\begin{equation}\label{eqn:pde}
-\frac{\partial}{\partial x} \left( a(x,y) \frac{\partial u}{\partial x}\right) -
\frac{\partial}{\partial y} \left( b(x,y) \frac{\partial u}{\partial
y}\right) = f(x,y)\;, \ (x,y) \in [0,1] \times [0,1],
\end{equation}
with zero Dirichlet boundary conditions and
the coefficients given by the piecewise constants
\begin{equation}\label{eqn:j_xy}
a(x,y) = b(x,y) = \left\{\begin{array}{cl}
	10^5, & 0.25<x,y<0.75 \\
	1,   & \mbox{otherwise}\; ;
	   \end{array}\right.
\end{equation}
which strongly vary (jump) across the two subdomains; see Figure~\ref{fig:grid1}.
%

\begin{figure}[h]
 \begin{center}
 \includegraphics[width = 6cm]{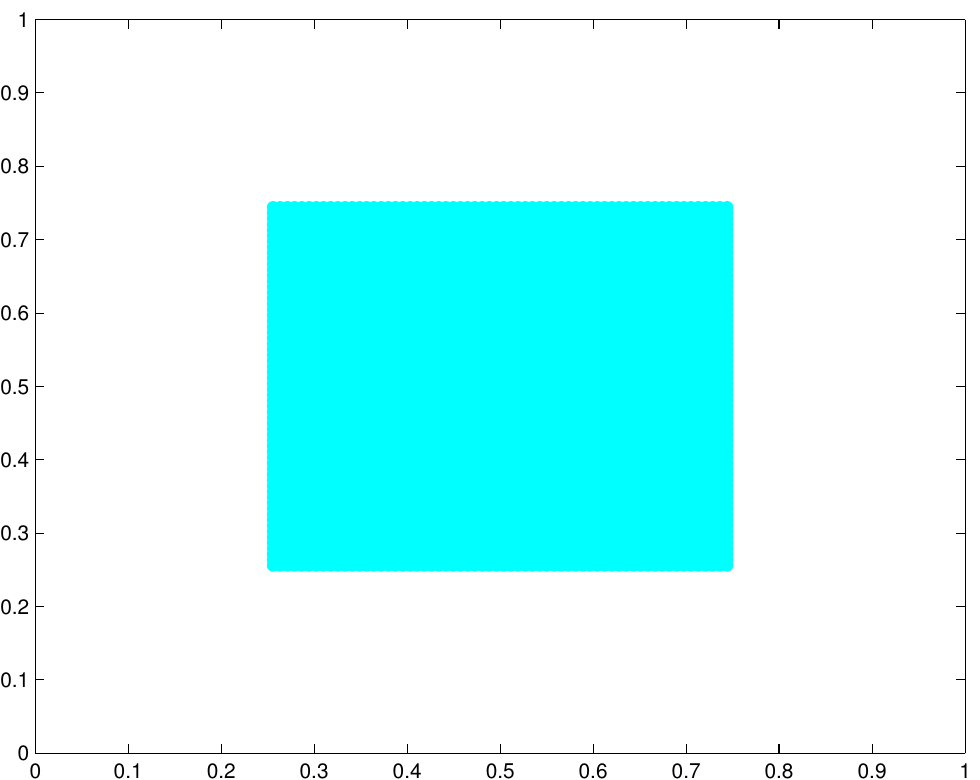}
 \end{center}
 \caption{Location of the jump in coefficients~(\ref{eqn:j_xy}) for problem~(\ref{eqn:pde}).
  The shaded (cyan for the color plot) subdomain corresponds to the jump region $(0.25,0.75) \times (0.25,0.75)$.}
 \label{fig:grid1}
\end{figure}


In order to discretize~(\ref{eqn:pde}), we introduce a $128$-by-$128$ (interior points)
uniform grid and use the standard $5$-point finite difference (FD) stencil.
The resulting linear system, $Ax = b$, is SPD of size $n=16,384$.
For testing purposes, the right-hand side $b$ is randomly chosen.
Here, and below, the FD matrices have been generated using the SPARSKIT library~\cite{Saad:SPARSKIT}.

Figure~\ref{fig:bipartn1} shows the \textit{bipartitions} produced by different methods.
It is readily seen that Acut-RSB precisely detects the jump region and places most
of the cut along its boundary, without ``cutting'' inside.
In the framework of the DD type methods,
the latter is consistent with a common recommendation
to include the regions corresponding to different model parameters into
separate subdomains.
The eigenvector, used to define the bipartition in Acut-RSB, is shown in Figure~\ref{fig:ev1} (left).
The values of $\gamma$ and $\delta$ for the weights in MeTiS($y$) and MeTiS($t$) have been set
to $10^5$ and $1$, respectively. The convergence tolerance for LOBPCG is $10^{-7}$.

\begin{figure}[h]
 \begin{center}
 \includegraphics[width = 6.0cm]{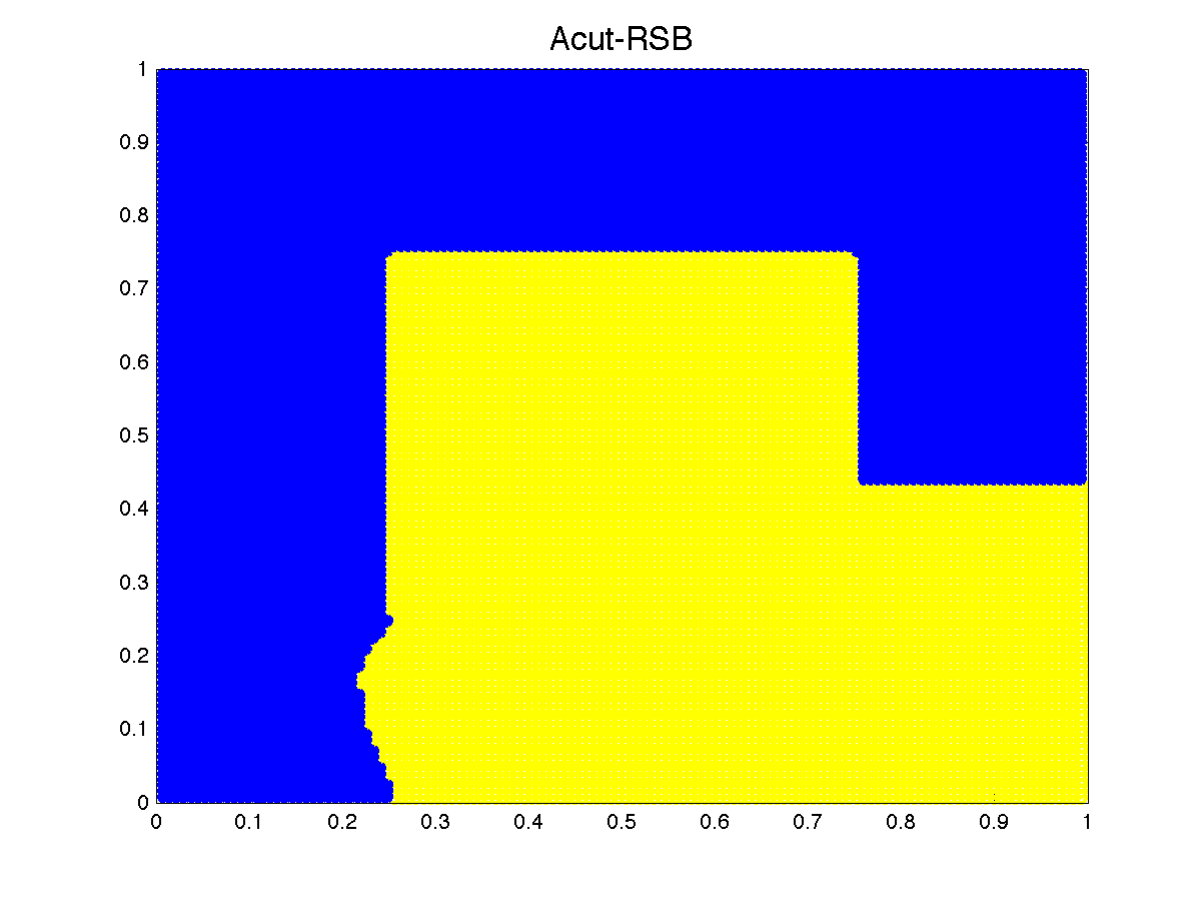}
 \includegraphics[width = 6.0cm]{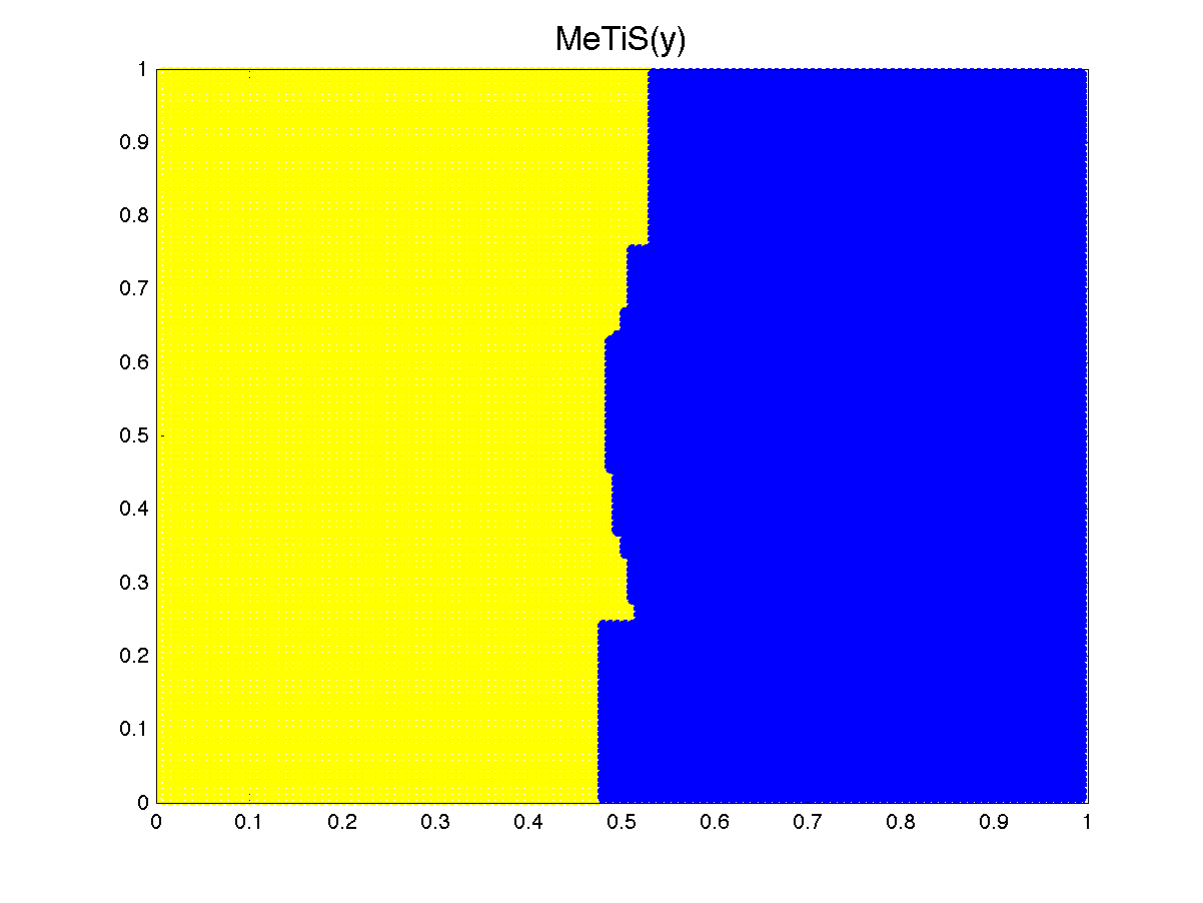} \\
 \includegraphics[width = 6.0cm]{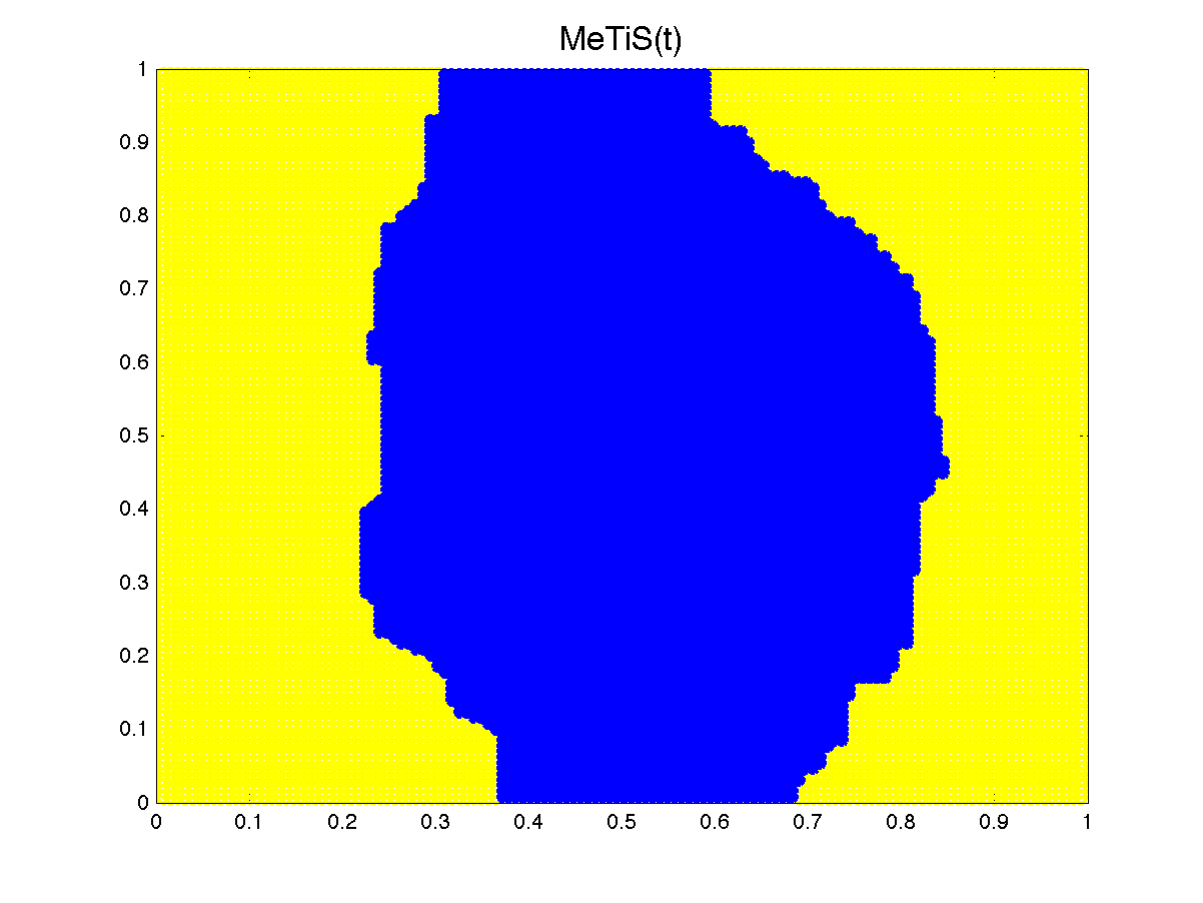}
 \includegraphics[width = 6.0cm]{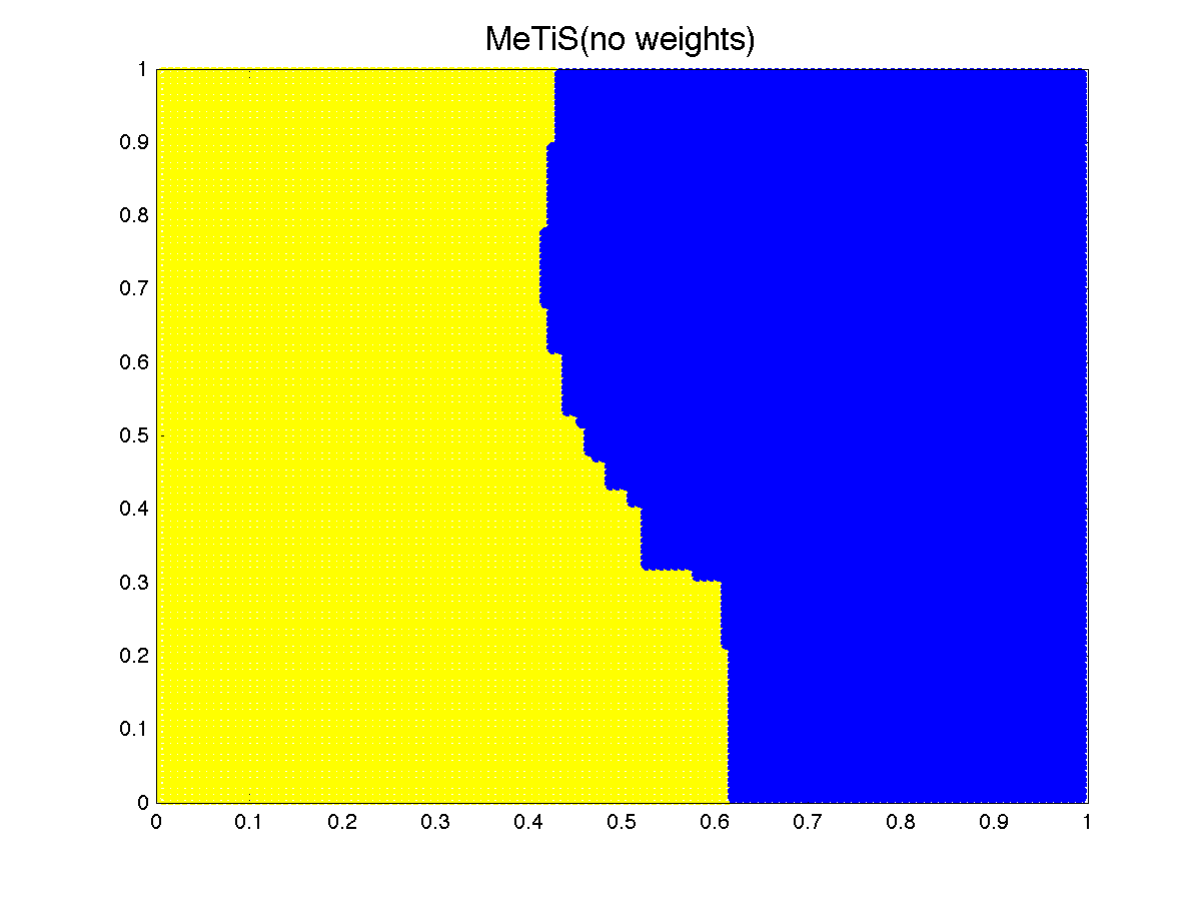}
 \end{center}
 \caption{Bipartitions for problem~(\ref{eqn:pde}) with coefficients in~(\ref{eqn:j_xy}).}
 \label{fig:bipartn1}
\end{figure}

Similar to Acut-RSB, we observe that the bipartition produced by MeTiS($t$) also
recognizes the region of the jump and isolates it in a separate subdomain.
The visual difference, however, is in that the boundaries of this subdomain are not as smooth
as for the Acut and do not tightly follow the boundaries of the jump region.

In Figure~\ref{fig:ex1_4D}, we present the results obtained after an additional step
of the recursive bipartitioning (with the  same $\gamma$, $\delta$, and the LOBPCG
convergence tolerance as above).
In this case, the partitioning procedures deliver 4 subdomains.
Note that, regardless of the weighting scheme, all the three MeTiS runs perform long cuts within
the jump region. In contrast, Acut-RSB essentially ``crops'' the central square, with the top right
corner assigned to a different subdomain merely to ensure the strict load balance.

\begin{figure}[h]
 \begin{center}
 \includegraphics[width = 6.0cm]{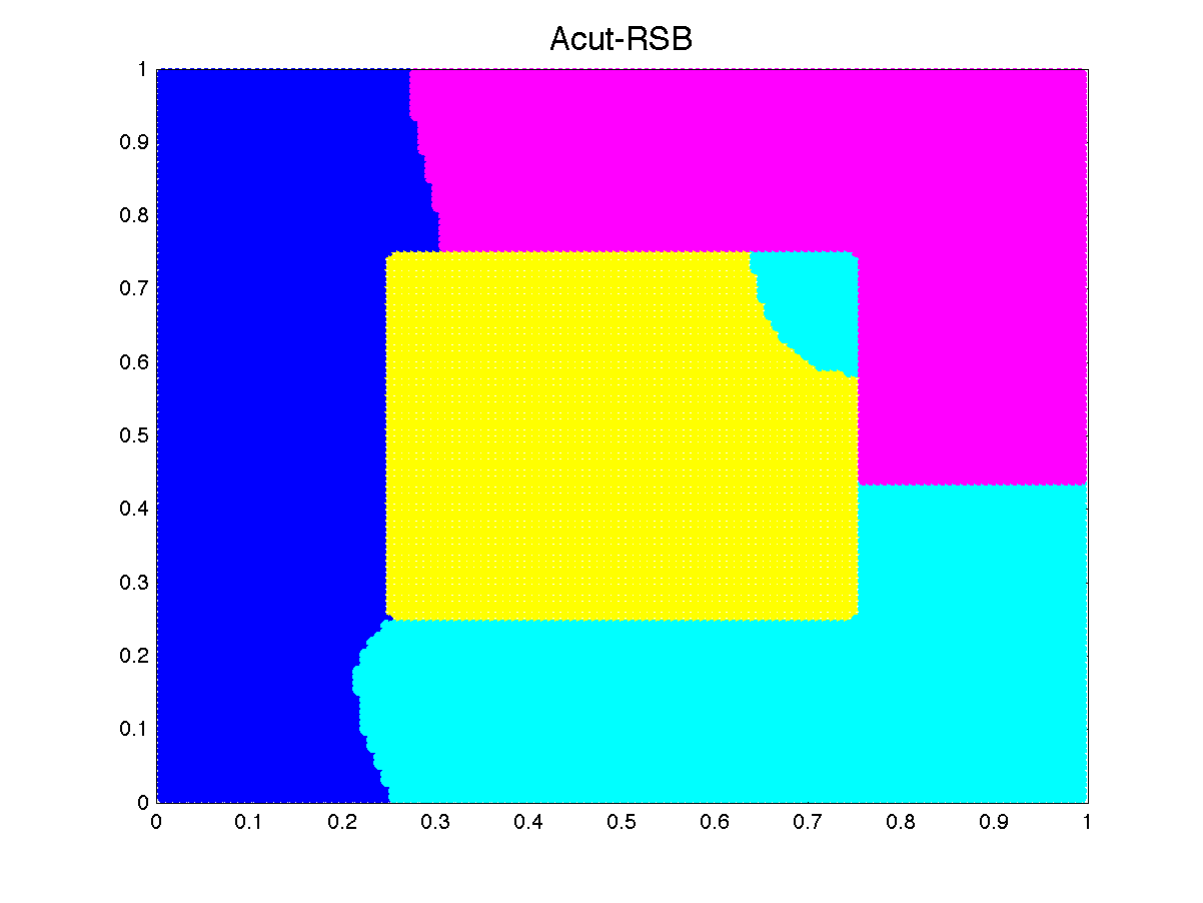}
 \includegraphics[width = 6.0cm]{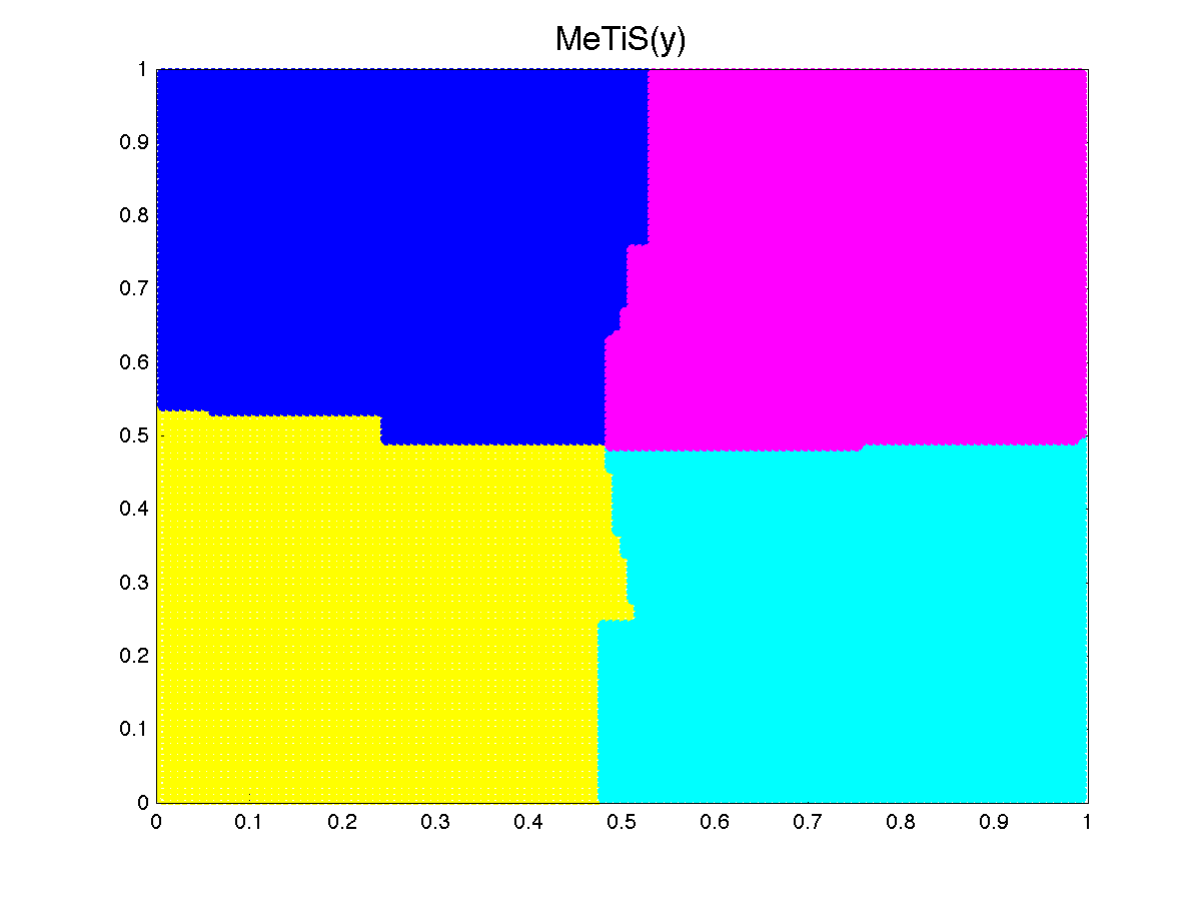} \\
 \includegraphics[width = 6.0cm]{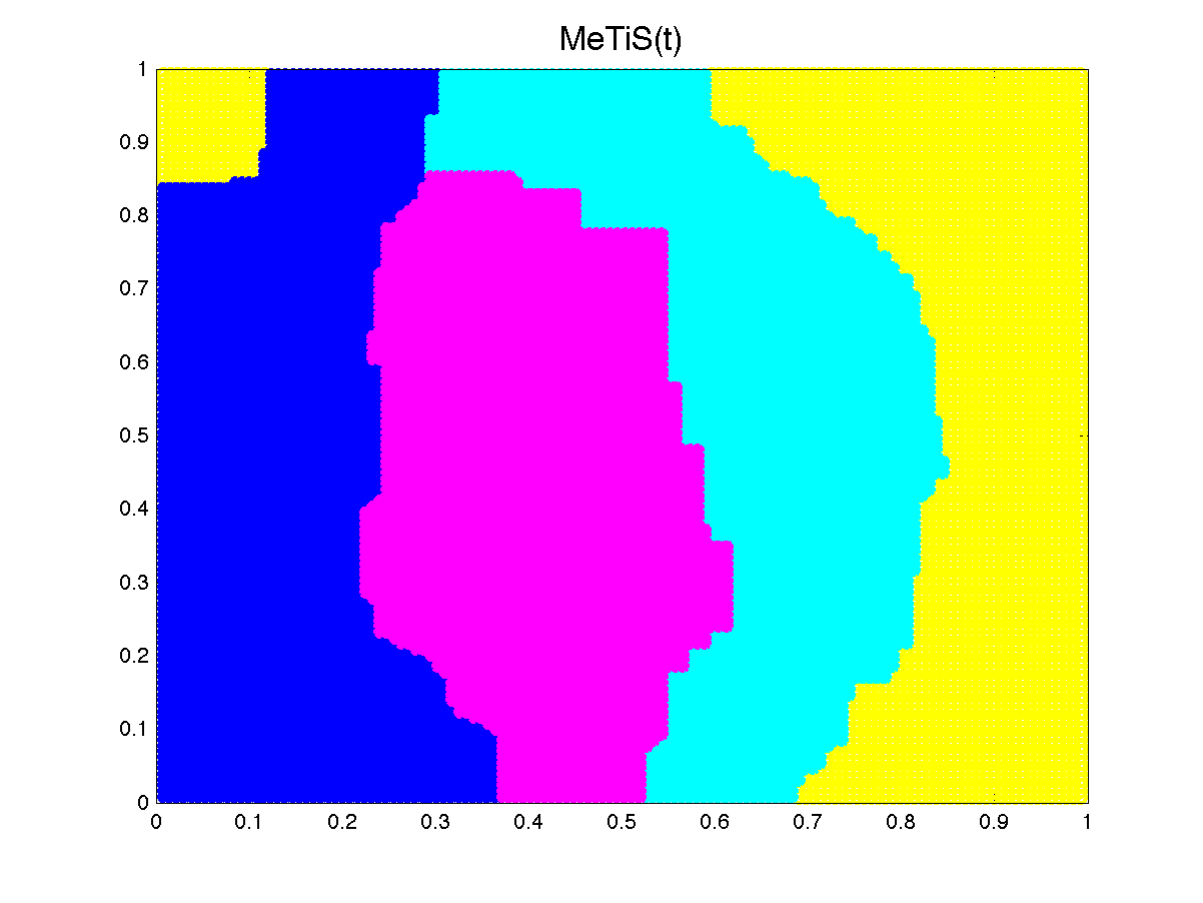}
 \includegraphics[width = 6.0cm]{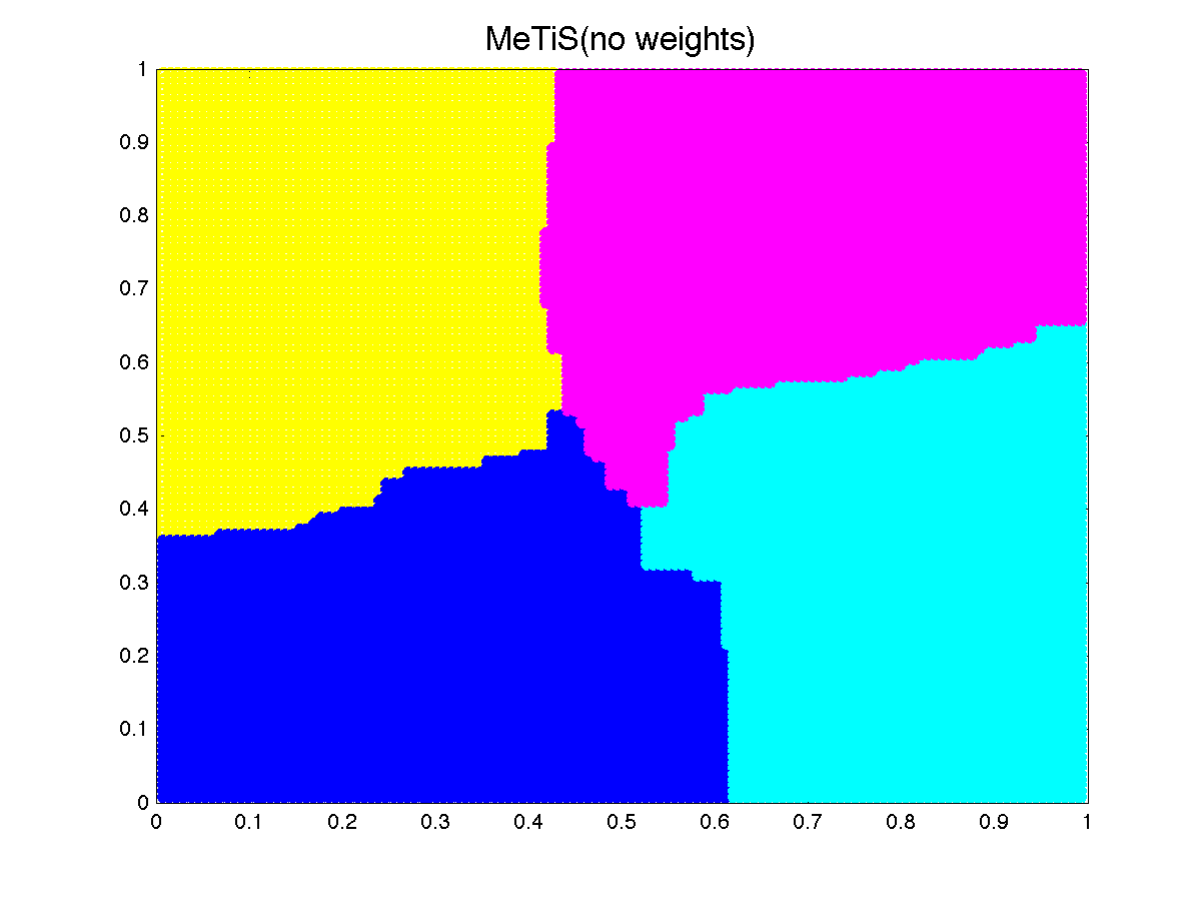}
 \end{center}
 \caption{Partitioning into $4$ subdomains for problem~(\ref{eqn:pde}) with coefficients in~(\ref{eqn:j_xy}).}
 \label{fig:ex1_4D}
\end{figure}

Since one of the two subdomains resulting from the Acut-RSB bipartioning does not contain a jump
region (see Figure~\ref{fig:bipartn1}, top left), we use a standard partitioning approach at the
recursive step on this subdomain. In particular, we use the spectral bisection based on the
Fiedler vector~\cite{Pothen.Simon.Liou:90}.

Throughout all of our experiments, we set the number of targeted subdomains 
to be small, i.e., at most 16 as in the 3D elasticity example considered below.     
Due to simplicity of the geometries of the tested problems,   
convergence of the Acut based solver deteriorates if the number of subdomains is increased. 
In this case, Acut-RSB begins to partition \textit{inside} the regions with similar coefficient
magnitudes, i.e., it is forced by the load balance constraint
to discard ``heavy'' edges. 
At some point, this hinders the convergence.
Generally, we expect that the number of subdomains, and hence of processors, will 
depend on the problem geometry and coefficients.



In Figure~\ref{fig:bipartn1_cv}, we show the effects of partitioning on the
convergence of a preconditioned iterative scheme.
In particular, we plot the convergence curves of PCG--AS with the preconditioners
defined on different partitions.
In both cases, with 2 and 4 subdomains, we observe that PCG--AS based on the
Acut-RSB partitioning in Algorithm~\ref{alg:partn1} performs a significantly
smaller number of iterations. 

\begin{figure}[h]
 \begin{center}
 \includegraphics[width = 6cm]{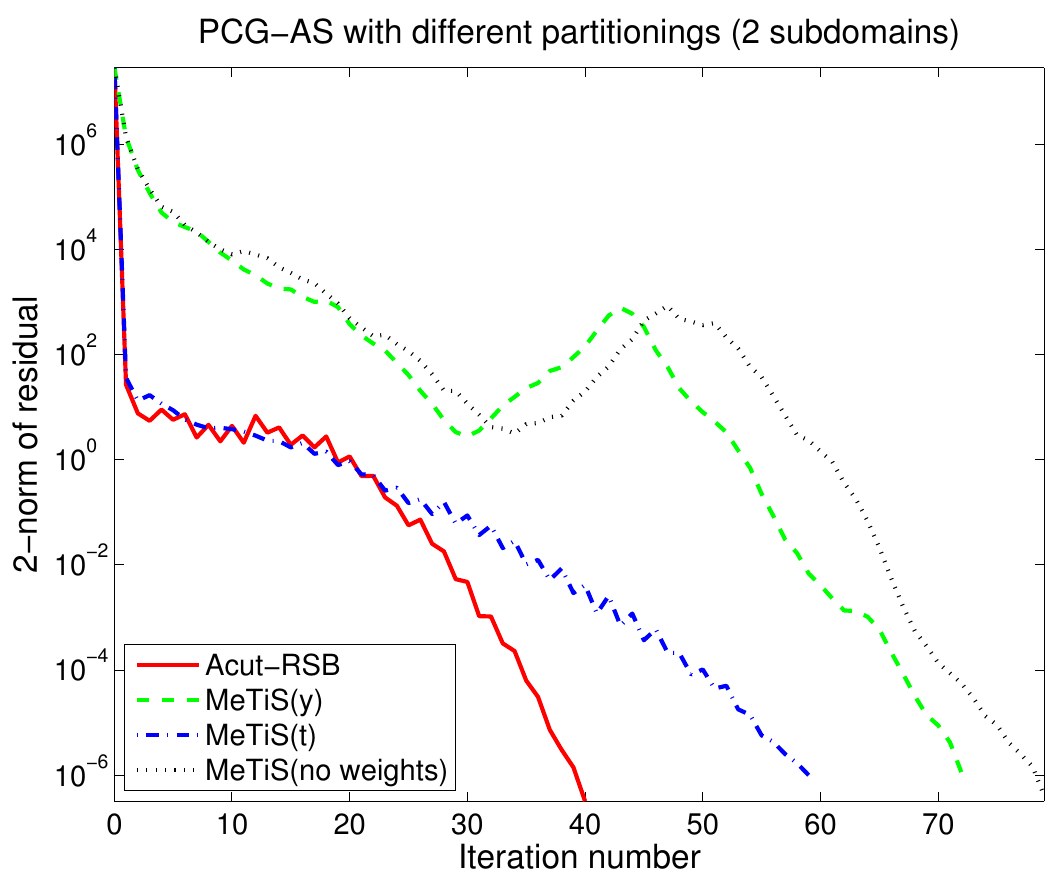}
 \includegraphics[width = 6cm]{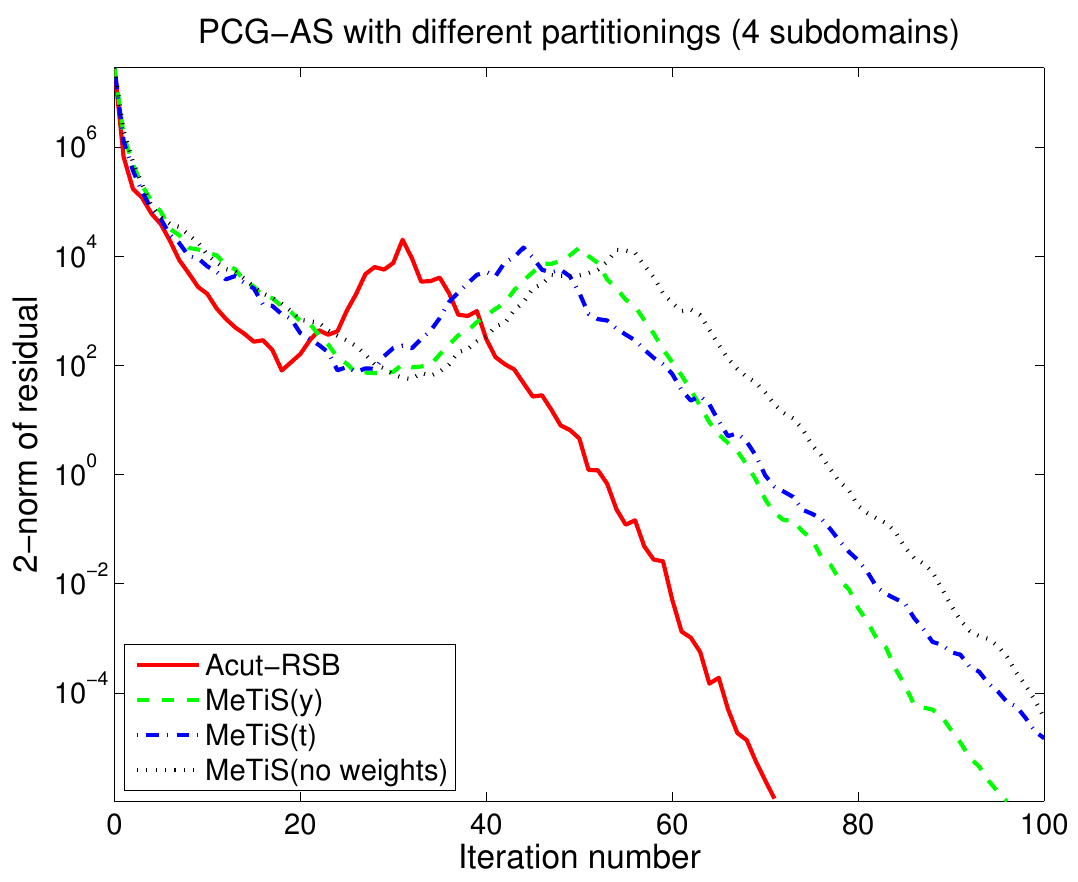}
 \end{center}
 \caption{Convergence of PCG--AS with different partitions for problem~(\ref{eqn:pde})
  with coefficients in~(\ref{eqn:j_xy}). The linear system of size $n = 16,384$ is partitioned into
  2 (left) and 4 (right) subdomains.}
 \label{fig:bipartn1_cv}
\end{figure}

In Table~\ref{tbl:reg}, we show the relative cut sizes and amounts of the coefficient
information discarded from the matrix after partitioning to construct the preconditioners.
In particular, we report the quantities
\[
\mbox{relcut} = \frac{\mbox{cut size}}{\mbox{nnz}} \times 100\%, \qquad \mbox{relcoef} = \frac{\sum_{(i,j)\in \mbox{cut}}|a_{ij}|}{\sum_{i,j} |a_{ij}|} \times 100\%,
\]
where ``cut size'' is the number of edges in the given cut and ``nnz'' denotes the number
of nonzeros in $A$, i.e., the total number of edges in the adjacency graph; ``cut'' is the edge cut of interest.
%
%

We see that
the cut sizes corresponding to Acut-RSB are relatively large
in terms of the edge count, compared to the unweighted MeTiS and MeTiS($y$). At the same time,
the amount of the coefficient information discarded for preconditioning is significantly smaller.
The similar properties are exhibited by MeTiS($t$).
%
Note that, in the case of bipartitioning, 
``relcoef'' for Acut-RSB is only slightly smaller than that for MeTiS($t$), whereas 
the quality of the associated preconditioner is much higher;
see Figure~\ref{fig:bipartn1_cv} (left).

\begin{table}
\begin{center}
\caption{Relative cut sizes and amounts of the coefficient information discarded to construct preconditioners for
problem~(\ref{eqn:pde}) with coefficients in~(\ref{eqn:j_xy}).}
\label{tbl:reg}
\begin{tabular}{|c||c|c||c|c|}
\cline{2-5}
\multicolumn{1}{c|}{} & \multicolumn{2}{c||}{2 subdomains} & \multicolumn{2}{c|}{4 subdomains}\tabularnewline
\hline
Partitioning & relcut & relcoef & \multicolumn{1}{c|}{relcut} & relcoef\tabularnewline
\hline
\hline
Acut-RSB & 0.76 & $3\times10^{-5}$ & 1.25 & 0.42\tabularnewline
\hline
MeTiS($y$) & 0.44 & 0.85 & 0.86 & 1.62\tabularnewline
\hline
MeTiS($t$) & 1.06 & $4\times10^{-5}$ & 1.67 & 0.88\tabularnewline
\hline
MeTiS(no w.) & 0.48 & 1.07 & 1.01 & \multicolumn{1}{c|}{2.13}\tabularnewline
\hline
\end{tabular}
\end{center}
\end{table}

To the best of our knowledge, there are no available convergence bounds that are based 
on ``relcoef''. 
However, from the practical point of view,  
it is reasonable to expect 
that the smaller ``relcoef'' the higher is the quality of the corresponding preconditioner.
%
In particular, if a standard graph partitioner is at hand, then common
approaches for assigning the edge weights, such as in MeTiS($t$), are motivated exactly
by this heuristic. 
While often achieving the smallest values of
``relcoef'' 
indeed leads to better results, 
we demonstrate 
that the dependence does not hold in general.
%
%
%
%

More precisely, we keep track of ``relcoef'' to show that the preconditioning quality is affected not only by 
the sum of (absolute values of) the discarded coefficients, 
but also by the 
\textit{actual combination of edges} in the cut.
While the traditional MeTiS based approaches can succeed in the former, Acut-RSB is capable of 
choosing more suitable edge combinations for the resulting cuts.
This point is further elaborated on in the following examples.

\paragraph{\textbf{\emph{$2$D diffusion equation: the ``checkerboard'' jump location}}}

Let us now consider a discretization of equation~(\ref{eqn:pde})
with zero Dirichlet boundary conditions, where the jumps in coefficients are located in the ``checkerboard''
fashion,
\begin{equation}\label{eqn:j_xy_checker}
a(x,y) = b(x,y) = \left\{\begin{array}{cl}
	10^5, & \mbox{``black''} \\
	1,   & \mbox{``white''}\; ;
	   \end{array}\right.
\end{equation}
as shown in Figure~\ref{fig:grid1_checker}. As in the above example, we use a 5-point FD stencil
on a 128-by-128 uniform grid, which leads to an SPD linear system of size $n=16,384$.

\begin{figure}[h]
 \begin{center}
 \includegraphics[width = 6cm]{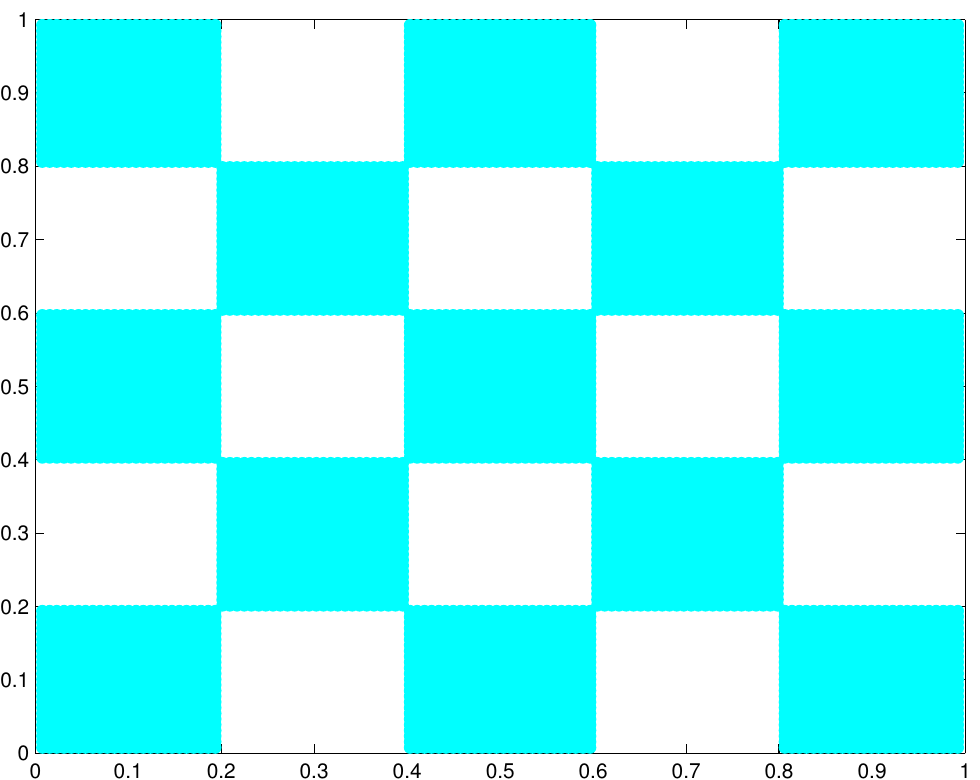}
 \end{center}
 \caption{The ``checkerboard'' location of the jump in coefficients~(\ref{eqn:j_xy_checker}) for problem~(\ref{eqn:pde}).
  The shaded (cyan for the color plot) subdomains correspond to the ``black'' jump regions.}
 \label{fig:grid1_checker}
\end{figure}

The results of bipartitioning produced by different methods are presented in Figure~\ref{fig:bipartn1_checker}.
Similar to the previous example, Acut-RSB perfectly detects the jump regions and avoids ``cutting'' inside.
Note that the resulting two subdomains are disconnected. The eigenvector of~(\ref{eqn:evp}), used to define the Acut-RSB
bipartition, is shown in Figure~\ref{fig:ev1} (right). The parameters $\gamma$ and $\delta$ are the same as in the previous example,
i.e., $10^5$ and $1$, respectively. The LOBPCG convergence tolerance is set to $10^{-7}$.

\begin{figure}[h]
 \begin{center}
 \includegraphics[width = 6.0cm]{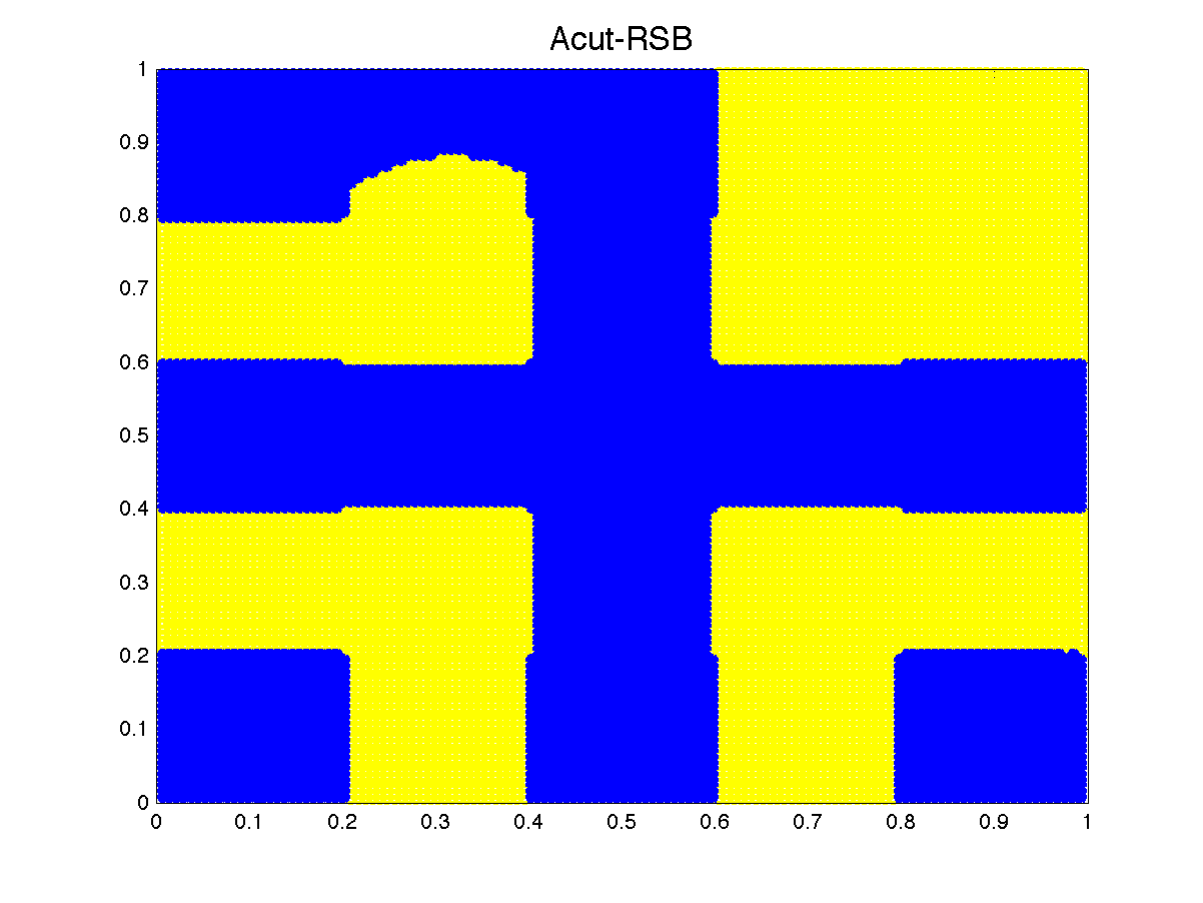}
 \includegraphics[width = 6.0cm]{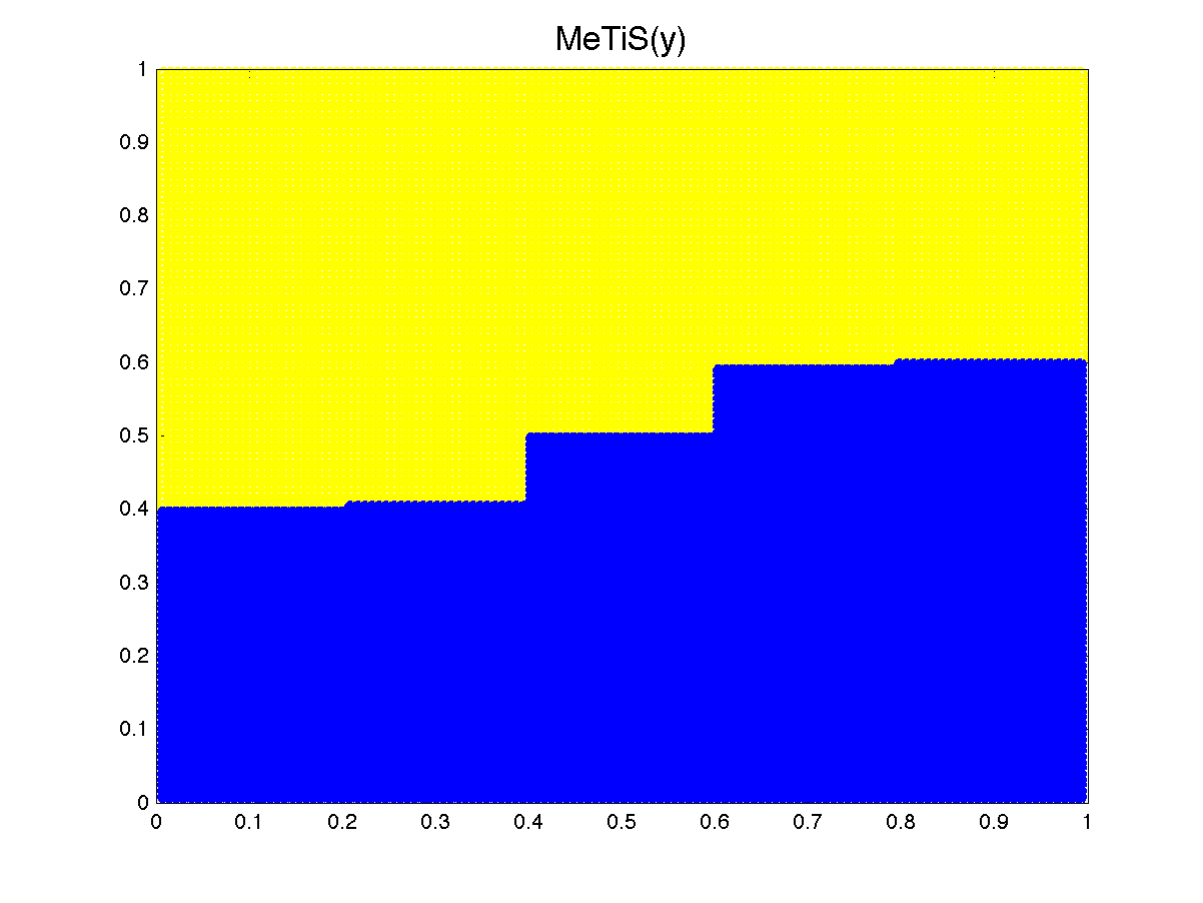} \\
 \includegraphics[width = 6.0cm]{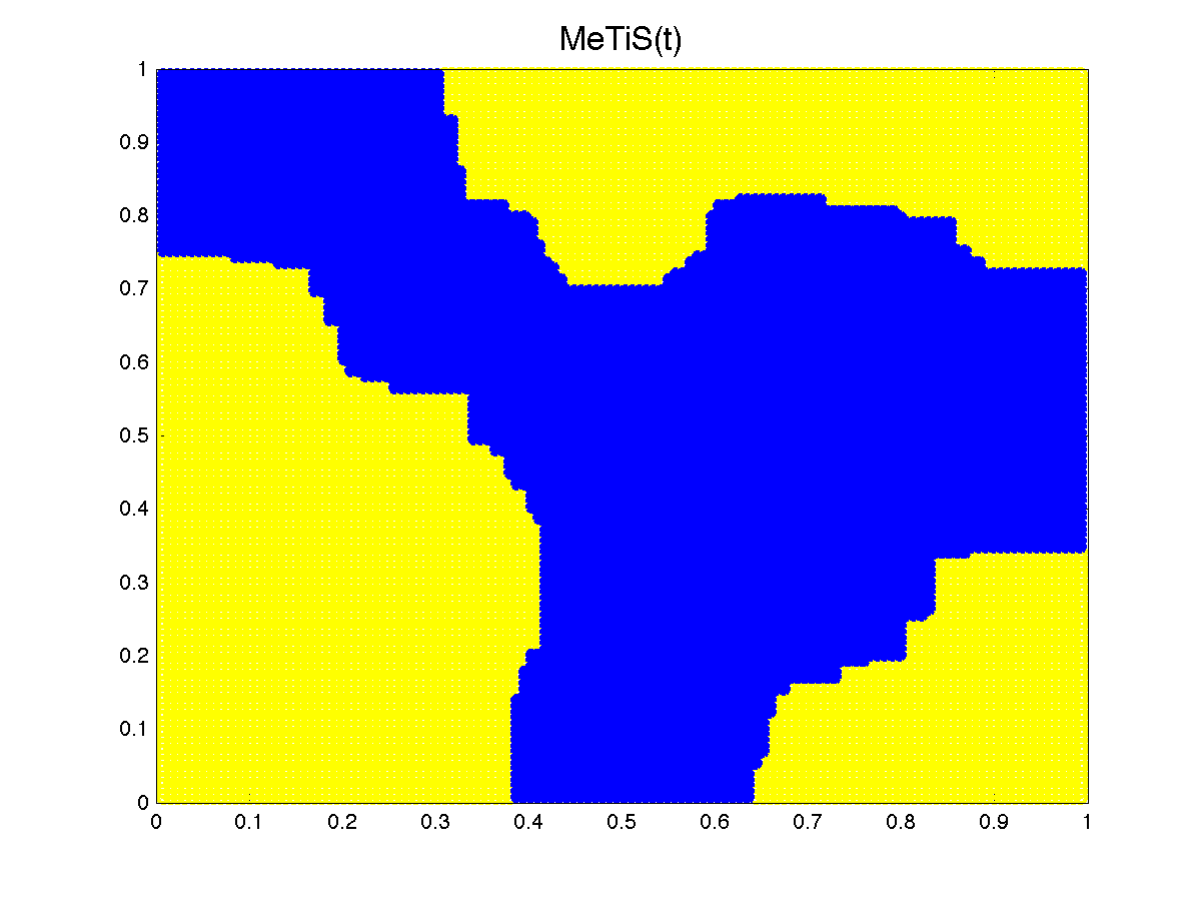}
 \includegraphics[width = 6.0cm]{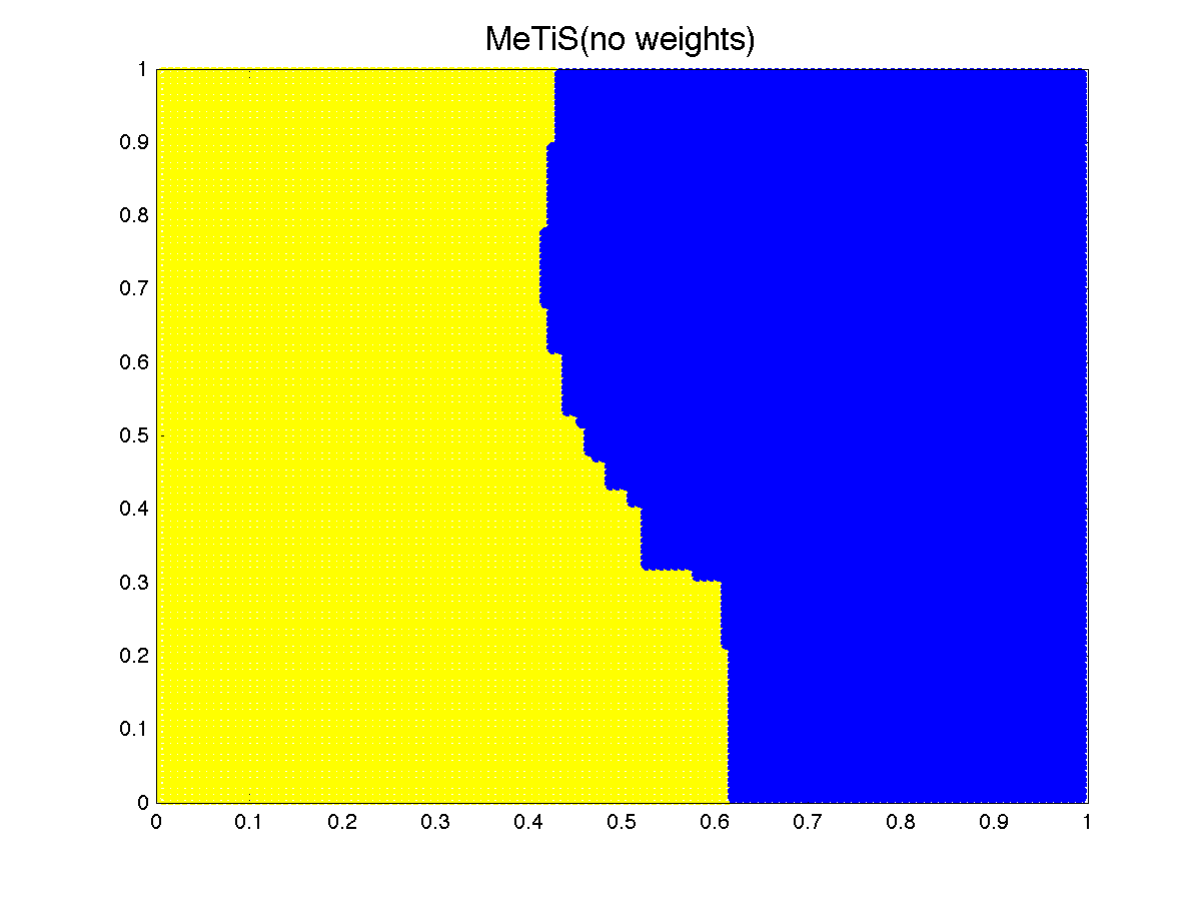}
 \end{center}
 \caption{Bipartitions for problem~(\ref{eqn:pde}) with coefficients in~(\ref{eqn:j_xy_checker}).}
 \label{fig:bipartn1_checker}
\end{figure}

Figure~\ref{fig:bipartn1_checker} demonstrates that, unlike the unweighted MeTiS,
the runs of MeTiS($y$) and MeTiS($t$) deliver partitions that adapt to the geometry of the
jumps, i.e., attempt to follow the boundaries of the ``black'' subregions.
The advantage, with respect to convergence of PCG--AS,
of using matrix coefficients
at the partitioning stage
is further confirmed in Figure~{\ref{fig:ex1_cv_4D}}.

\begin{figure}[h]
 \begin{center}
 \includegraphics[width = 6cm]{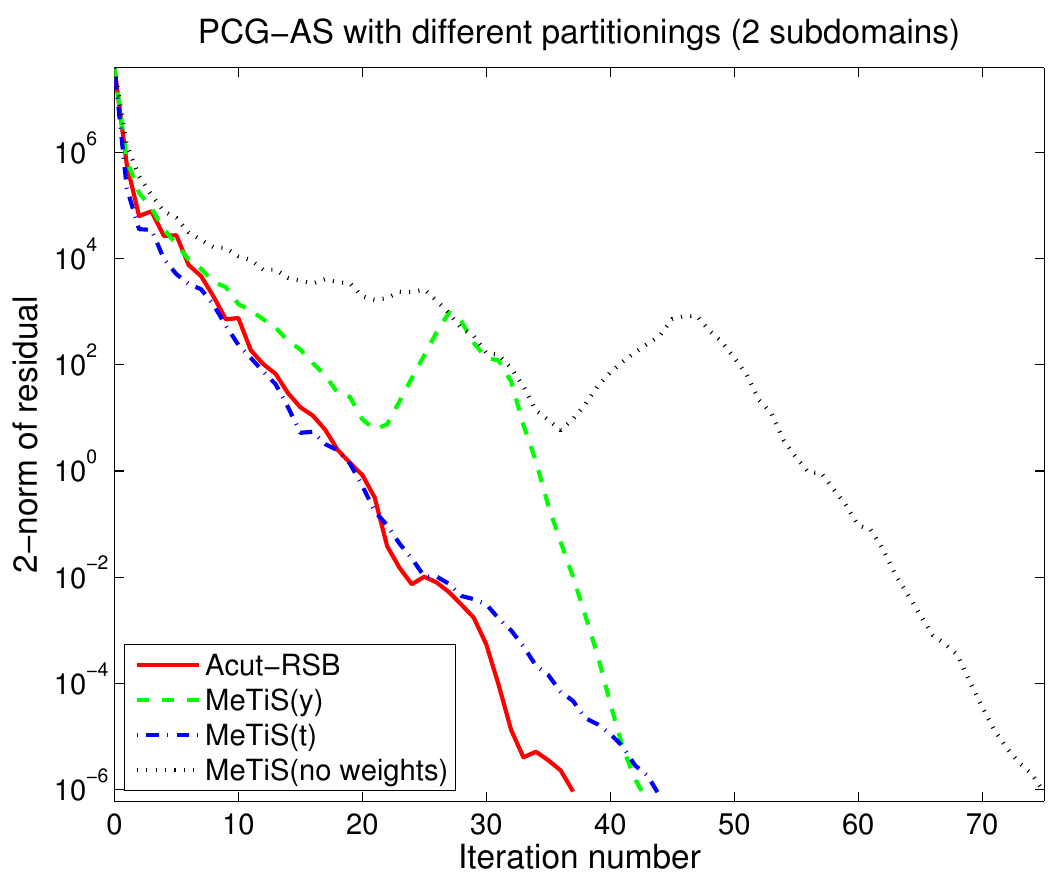}
 \end{center}
 \caption{Convergence of PCG--AS with different bipartitions for problem~(\ref{eqn:pde})
  with coefficients in~(\ref{eqn:j_xy_checker}). The linear system is of size $n = 16,384$.}
 \label{fig:ex1_cv_4D}
\end{figure}

Figure~{\ref{fig:ex1_cv_4D}} shows that, in terms of iteration count, PCG--AS with Acut-RSB
slightly outperforms the analogues based on MeTiS($y$) and MeTiS($t$).
Interestingly, however, the coefficient sum of (absolute values of) the matrix coefficients
discarded to construct the preconditioner, reported in Table~\ref{tbl:bipartn1_checker},
is \textit{not} the smallest for Acut-RSB.
More precisely, the ``relcoef'' value corresponding to Acut-RSB is three times larger than that of
MeTiS($t$).
This observation clearly allows one to conclude that the preconditioning quality is affected not only by the
magnitudes of the discarded matrix entries,
but also by the 
specific combination of edges selected to the cut that defines 
the underlying partitions.

\begin{table}
\begin{center}
\caption{Relative cut sizes and amounts of the coefficient information discarded to construct preconditioners for
bipartitioning problem~(\ref{eqn:pde}) with coefficients in~(\ref{eqn:j_xy_checker}).}
\label{tbl:bipartn1_checker}
\begin{tabular}{|l||c|c|}
\hline
Partitioning & relcut & relcoef  \tabularnewline
\hline
\hline
      Acut-RSB         &  1.78 & 0.06   \tabularnewline
\hline
      MeTiS($y$)   &  0.47 & 0.17   \tabularnewline
\hline
      MeTiS($t$)  &  1.23 & 0.02  \tabularnewline
\hline
      MeTiS(no w.)&  0.48 & 0.47  \tabularnewline
\hline
\end{tabular}
\end{center}
\end{table}

\begin{figure}[h]
 \begin{center}
 \includegraphics[width = 6cm]{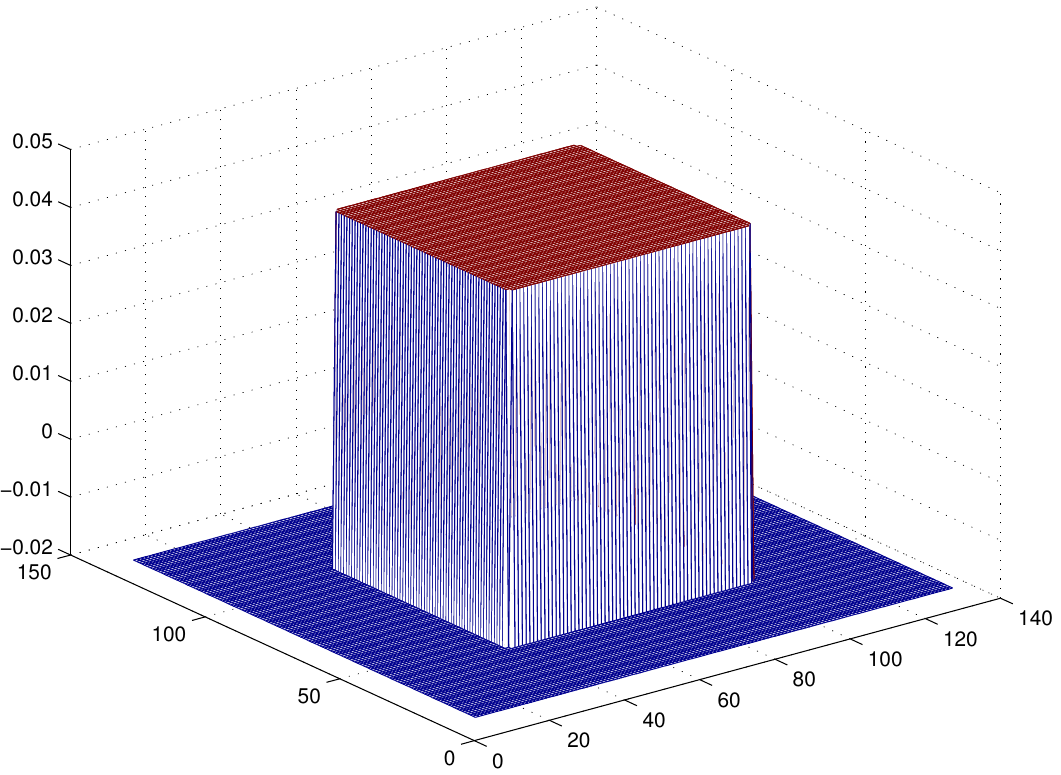}
 \includegraphics[width = 6cm]{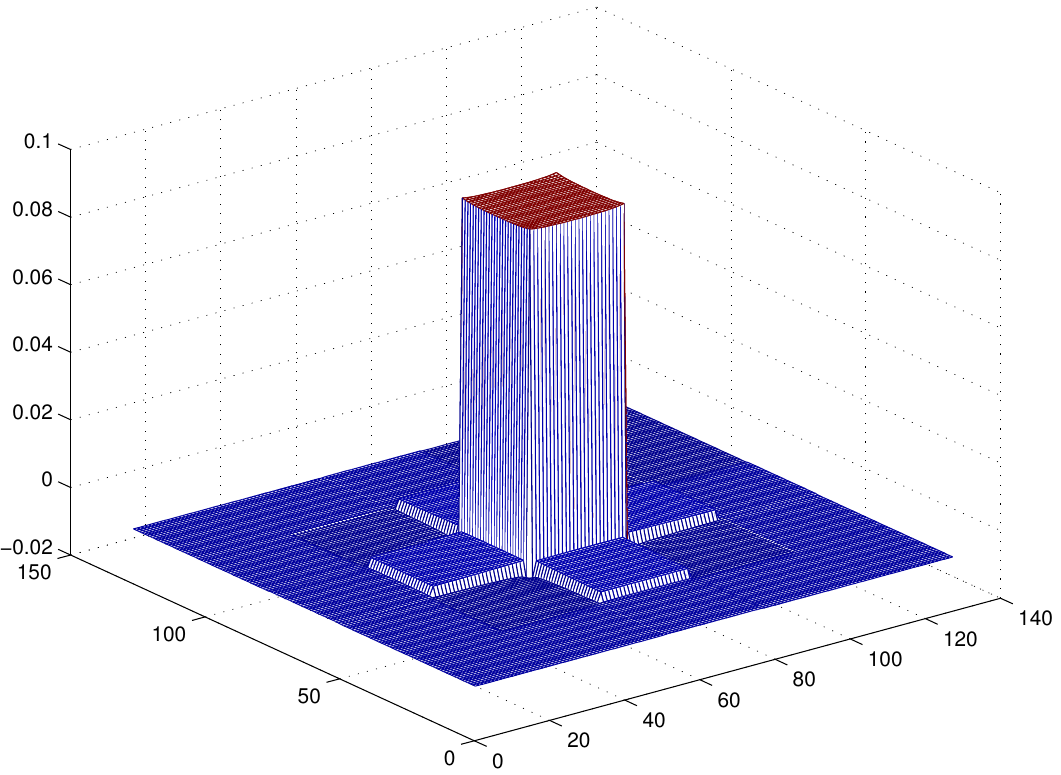}
 \end{center}
 \caption{``Mesh'' plot of the eigenvector $v$ of~(\ref{eqn:evp}) 
  used for bipartitioning problem~(\ref{eqn:pde}) with coefficients in~(\ref{eqn:j_xy}) (left) and~(\ref{eqn:j_xy_checker}) (right).
  Both eigenvectors capture the discontinuities in coefficients of the corresponding problems.}
 \label{fig:ev1}
\end{figure}

\paragraph{\textbf{\emph{$3$D linear elasticity}}}
Our next experiment is based on the example constructed by Mandel et al. in~\cite{Mandel.Sousedik.Sistek:12}
to test the performance of the adaptive Balancing Domain Decomposition by Constraints (BDDC)~\cite{Dohrmann:03}
in three dimensions. In this example, the authors consider the 3D linear elasticity problem (see, e.g.,~\cite{Hughes:00})
in a cube with material parameters $E = 10^6$ Pa and $\nu = 0.45$, penetrated by four bars with parameters
$E = 2.1 \times 10^{11}$ Pa and $\nu = 0.3$; see Figure~\ref{fig:cube}. Zero Dirichlet boundary conditions are assumed.

\begin{figure}[h]
 \begin{center}
 \includegraphics[width = 5.5cm]{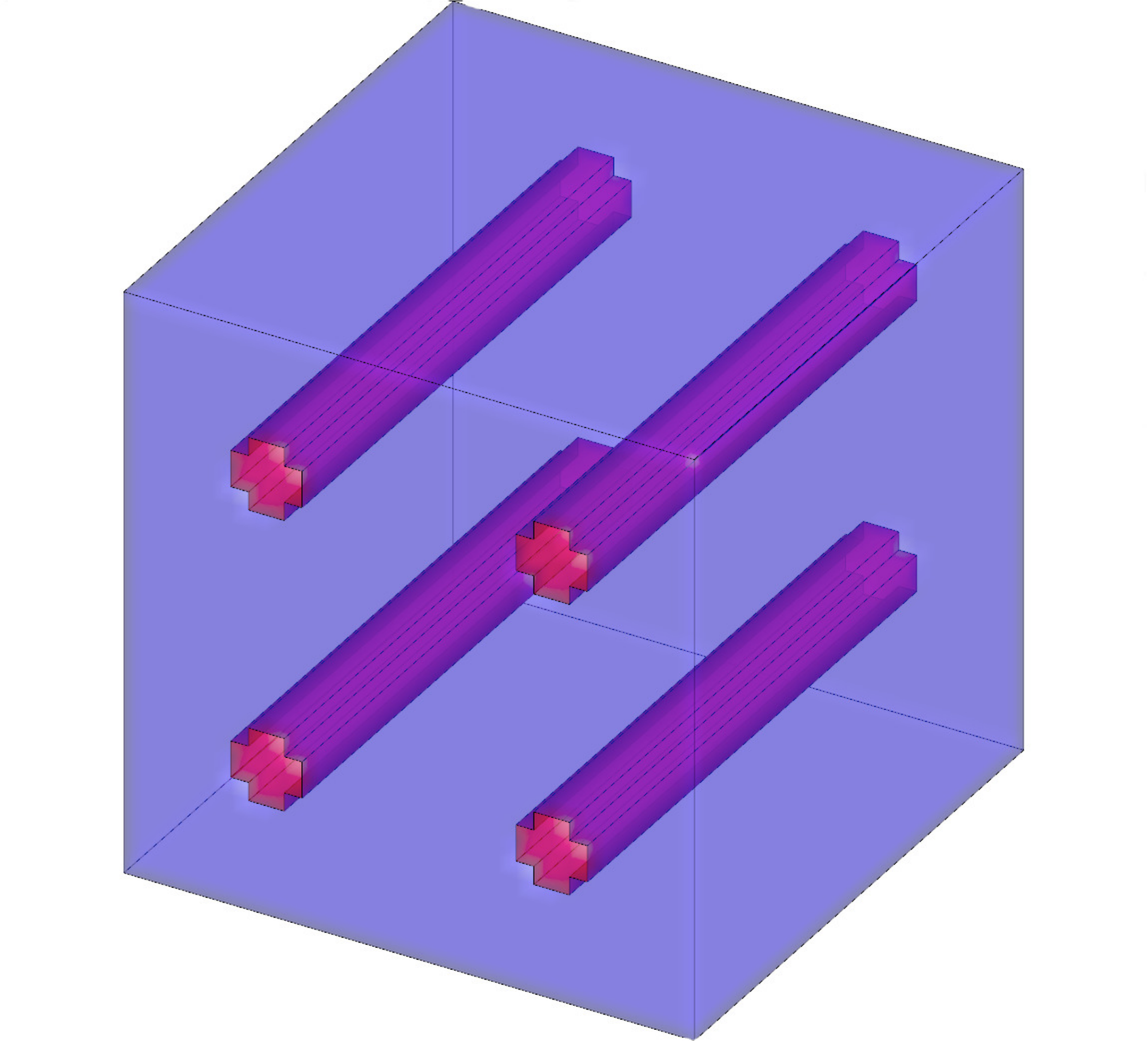}
 \end{center}
 \caption{3D linear elasticity: cube with jumps in coefficients. Example by Mandel et al.~\cite{Mandel.Sousedik.Sistek:12}.}
 \label{fig:cube}
\end{figure}

The problem has been discretized using bilinear finite elements (FE), resulting in
$107,811$ degrees of freedom. In our tests, we apply different
schemes to partition the problem into 8 and 16 subdomains,
and assess the quality of the obtained preconditioners according to the number of PCG--AS
iterations.
The parameters $\gamma$
and $\delta$ for MeTiS($y$) and MeTiS($t$) have been set to $10^4$ and $10^{-4}$,
respectively. The LOBPCG convergence tolerance is $10^{-4}$. In the case of 16 subdomains,
on the bottom level of the Acut-RSB recursion (after producing 8 subdomains),
we decrease the tolerance to $10^{-7}$.
The right-hand side
$b$ is chosen as a random unit vector.


%

\begin{figure}[h]
 \begin{center}
 \includegraphics[width = 6cm]{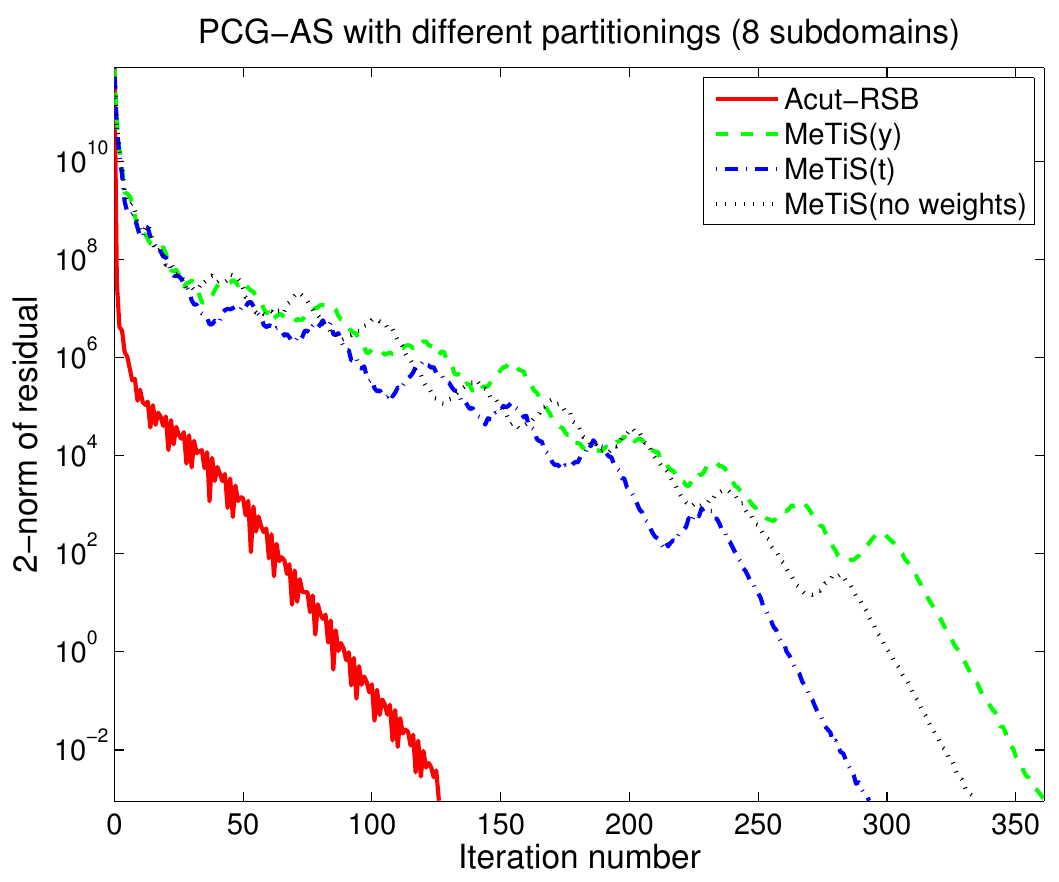}
 \includegraphics[width = 6cm]{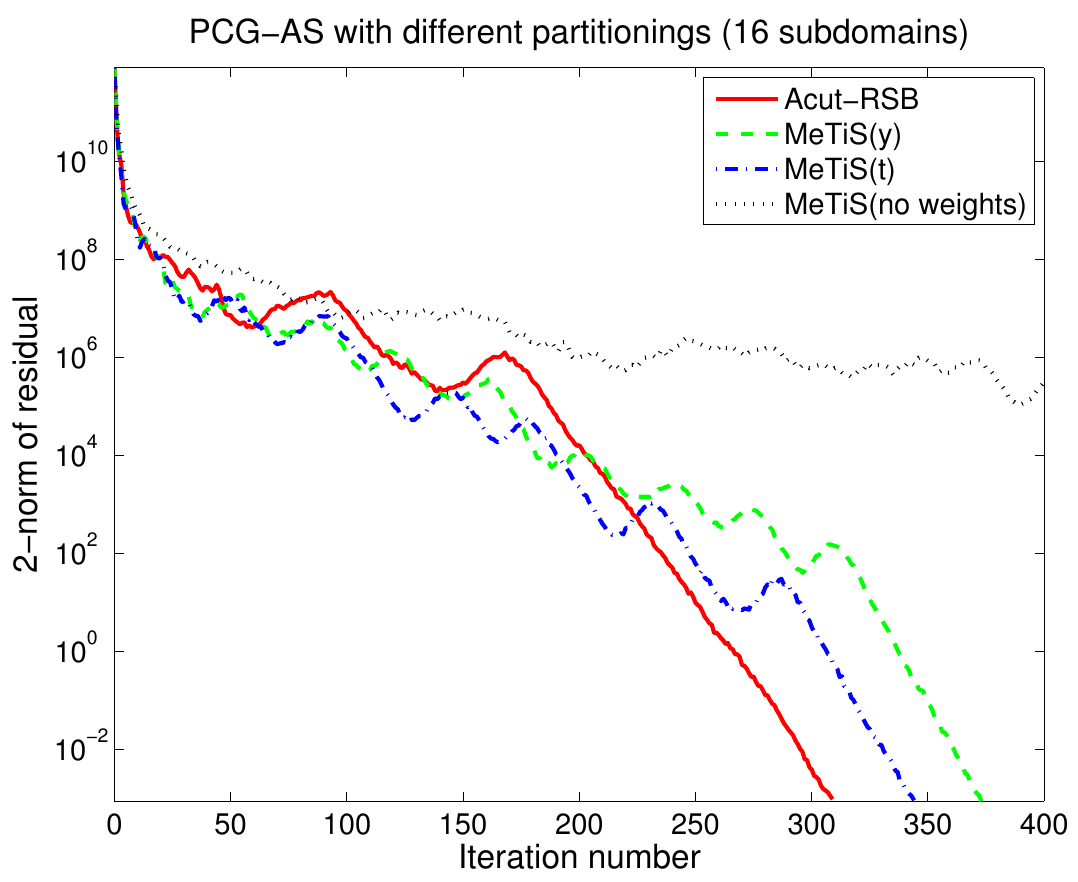}
 \end{center}
 \caption{Convergence of PCG--AS with different partitions (8 (left) and 16 (right) subdomains) for the 3D linear elasticity
in a cube with large jumps in coefficients; see Figure~\ref{fig:cube}. The size of the linear
system arising from the FE discretization is $n = 107,811$.}
 \label{fig:cube_cv}
\end{figure}

Figure~\ref{fig:cube_cv} shows that Acut-RSB leads to the best convergence of PCG--AS with both
8 and 16 subdomains. Note that, according to the results reported in Table~\ref{tbl:cube},
the cut size produced by Acut-RSB is about $2$--$2.5$ times larger than that delivered
by the MeTiS runs, implying extra communication overhead if implemented in parallel.
The gain from this ``loss of parallelism'', however, is the (approximately) $10\%$ to $45\%$ decrease in the iteration count
compared to the closest competitor, MeTiS($t$), and more than (approximately) $60\%$ decrease compared
to the unweighted MeTiS. Once again, from Table~\ref{tbl:cube}, we note that for 16 subdomains
``relcoef'' for Acut-RSB is larger than that for MeTiS($t$), although the convergence of the former is better.

\begin{table}
\begin{center}
\caption{Relative cut sizes and amounts of the coefficient information discarded to construct preconditioners for
3D linear elasticity problem in a cube with jumps in coefficients; see Figure~\ref{fig:cube}.}
\label{tbl:cube}
\begin{tabular}{|c||c|c||c|c|}
\cline{2-5}
\multicolumn{1}{c|}{} & \multicolumn{2}{c||}{8 subdomains} & \multicolumn{2}{c|}{16 subdomains}\tabularnewline
\hline
Partitioning & relcut & relcoef & \multicolumn{1}{c|}{relcut} & relcoef\tabularnewline
\hline
\hline
Acut-RSB & 19.98 & 0.01 & 25.39 & 1.55\tabularnewline
\hline
MeTiS($y$) & 8.03 & 2.51 & 11.14 & 2.51\tabularnewline
\hline
MeTiS($t$) & 7.30 & 1.50 & 11.99 & 1.51\tabularnewline
\hline
MeTiS(no w.) & 6.77 & 2.24 & 10.61 & \multicolumn{1}{c|}{10.28}\tabularnewline
\hline
\end{tabular}
\end{center}
\end{table}

\paragraph{\textbf{\emph{$2$D diffusion equation: unstructured grid}}}

In this concluding example, we consider a FE discretization of diffusion equation~(\ref{eqn:pde})
on an unstructured grid. We assume Dirichlet boundary conditions and $f(x,y)=1$.
The problem domain represents a unit square with four inscribed circles of the same radius;
see~Figure~\ref{fig:irreg_geom} (left).

\begin{figure}[h]
 \begin{center}
 \includegraphics[width = 6cm]{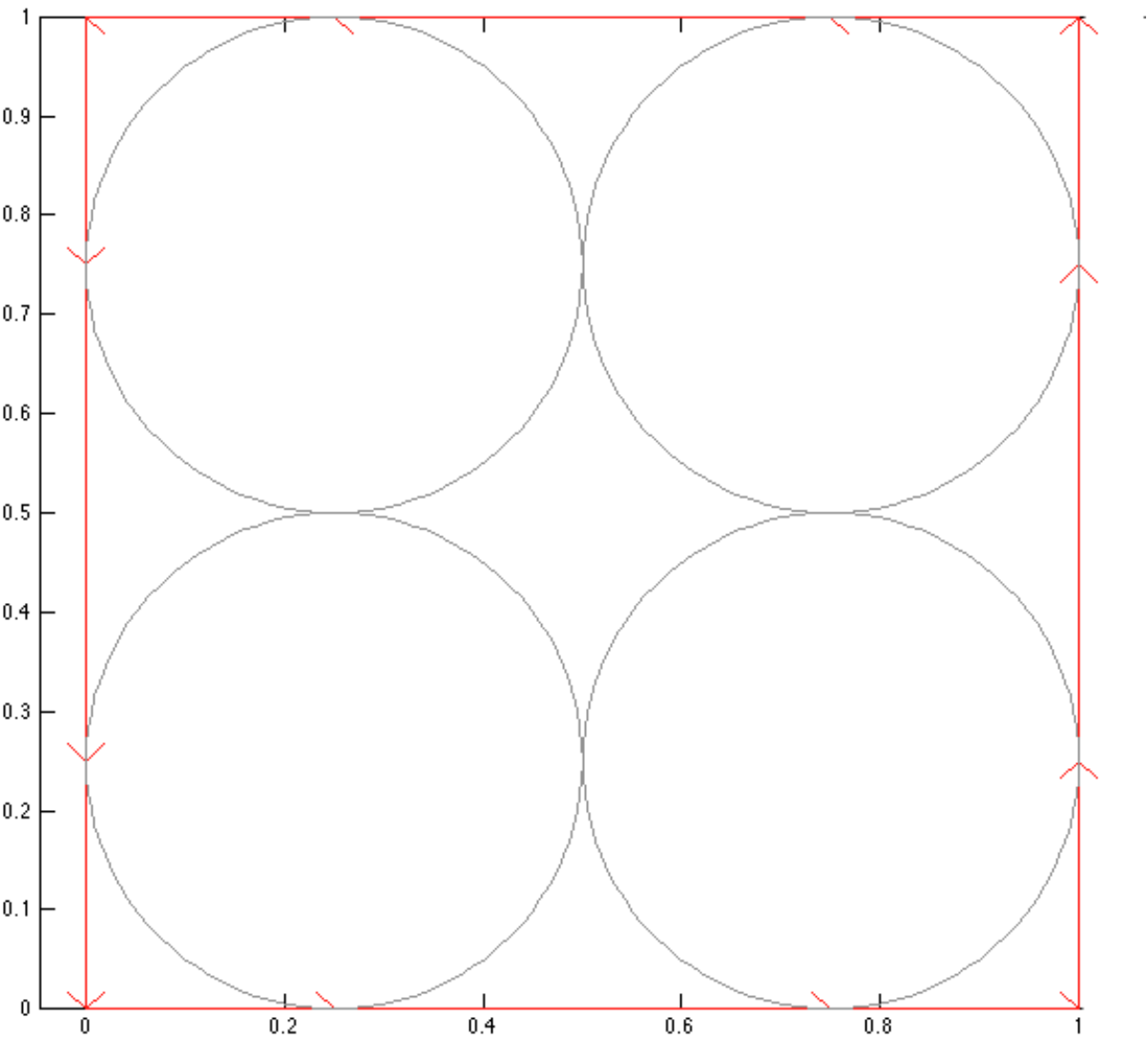}
 \includegraphics[width = 6cm]{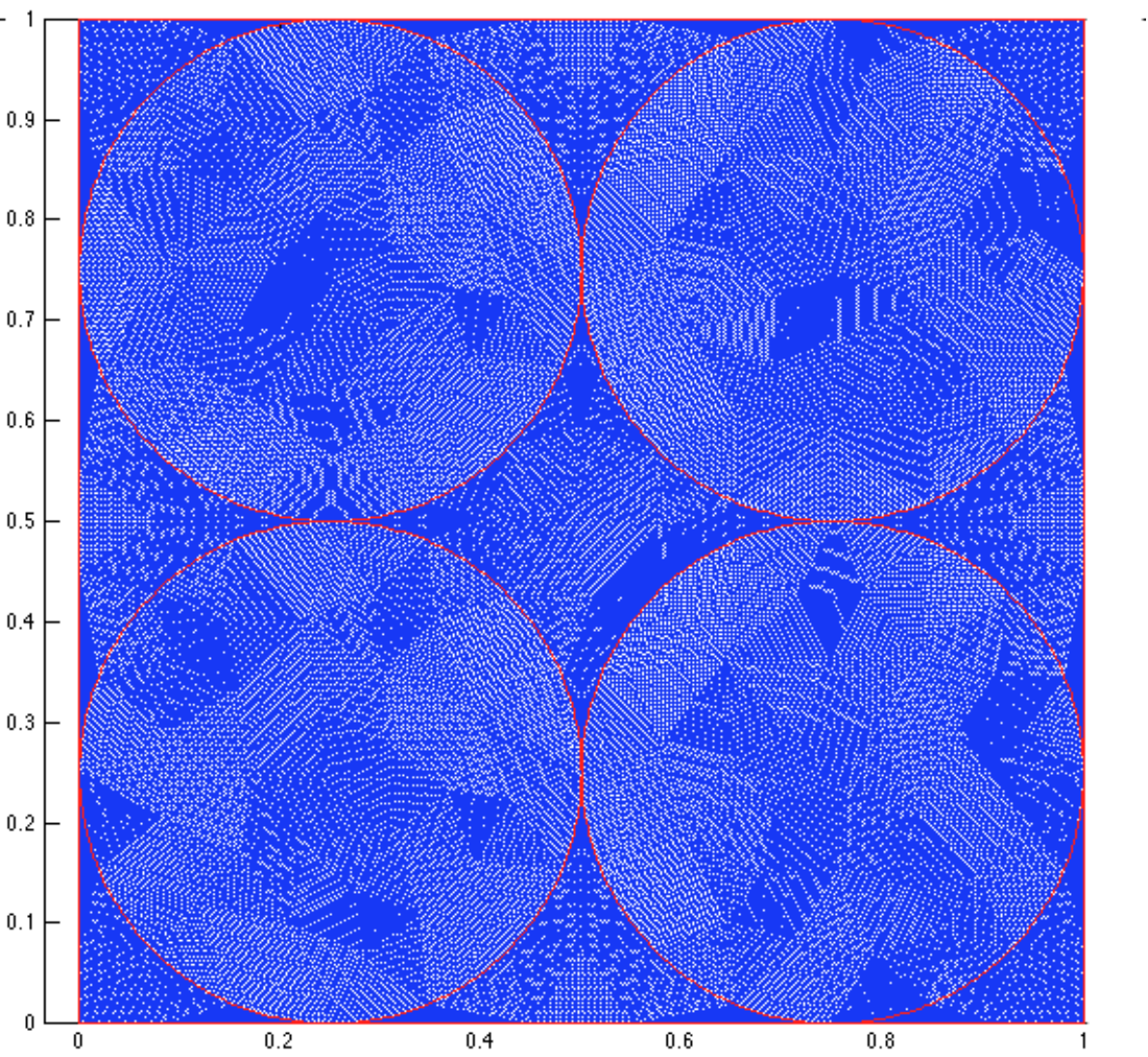}
 \end{center}
 \caption{Problem domain (left) and the corresponding FE mesh (right) for equation~(\ref{eqn:pde}).}
 \label{fig:irreg_geom}
\end{figure}

In order to discretize the equation on an unstructured grid, we use {\sc matlab PDE Toolbox}.
The toolbox allows one to define the problem geometry as well as the PDE
coefficients and to introduce an initial
triangulation,  which is further modified by a few refinement steps. The latter results in
an unstructured FE mesh with $47,713$ degrees of freedom, shown in Figure~\ref{fig:irreg_geom} (right).

The goal of the current example is twofold.
On the one hand, we show that Acut-RSB can be successfully
applied to problems on unstructured grids.
On the other hand, the example reveals potential
difficulties with the new partitioning strategy, which should be addressed in future research.

Let us first consider the following definition of the coefficients:
\begin{equation}\label{eqn:j_xy_irreg}
a(x,y) = b(x,y) = \left\{\begin{array}{cl}
	10^7, & \mbox{``outside circles and on circles' boundaries''} \\
	1,   & \mbox{``inside circles''}\; ;
	   \end{array}\right.
\end{equation}
i.e., $a(x,y)$ and $b(x,y)$ are the piecewise constants taking a large value outside of
the four circles and a small value inside.
We partition the problem into two and four subdomains using different partitioning schemes
and observe the convergence of the corresponding PCG--AS runs.
The parameters $\gamma$ and $\delta$ for MeTiS($y$) and MeTiS($t$) are set to
$10^4$ and $10^{-2}$, respectively. The LOBPCG convergence tolerance is $10^{-7}$
for the case of two, and $10^{-8}$ for the case of four, subdomains.
The partitioning results are given in Figures~\ref{fig:irreg_partn}~and~\ref{fig:irreg_partn4D}.

\begin{figure}[h]
 \begin{center}
 \includegraphics[width = 6.4cm]{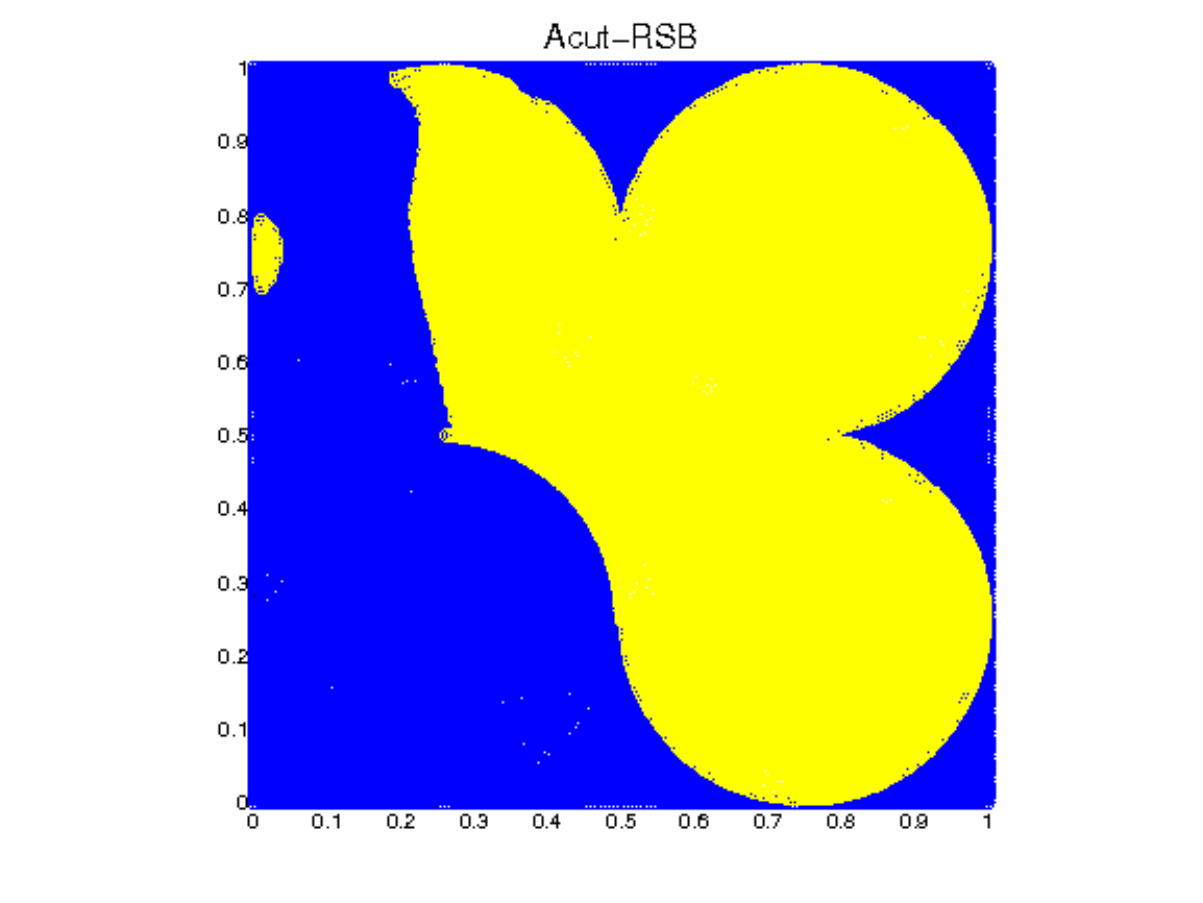}
 \includegraphics[width = 6.4cm]{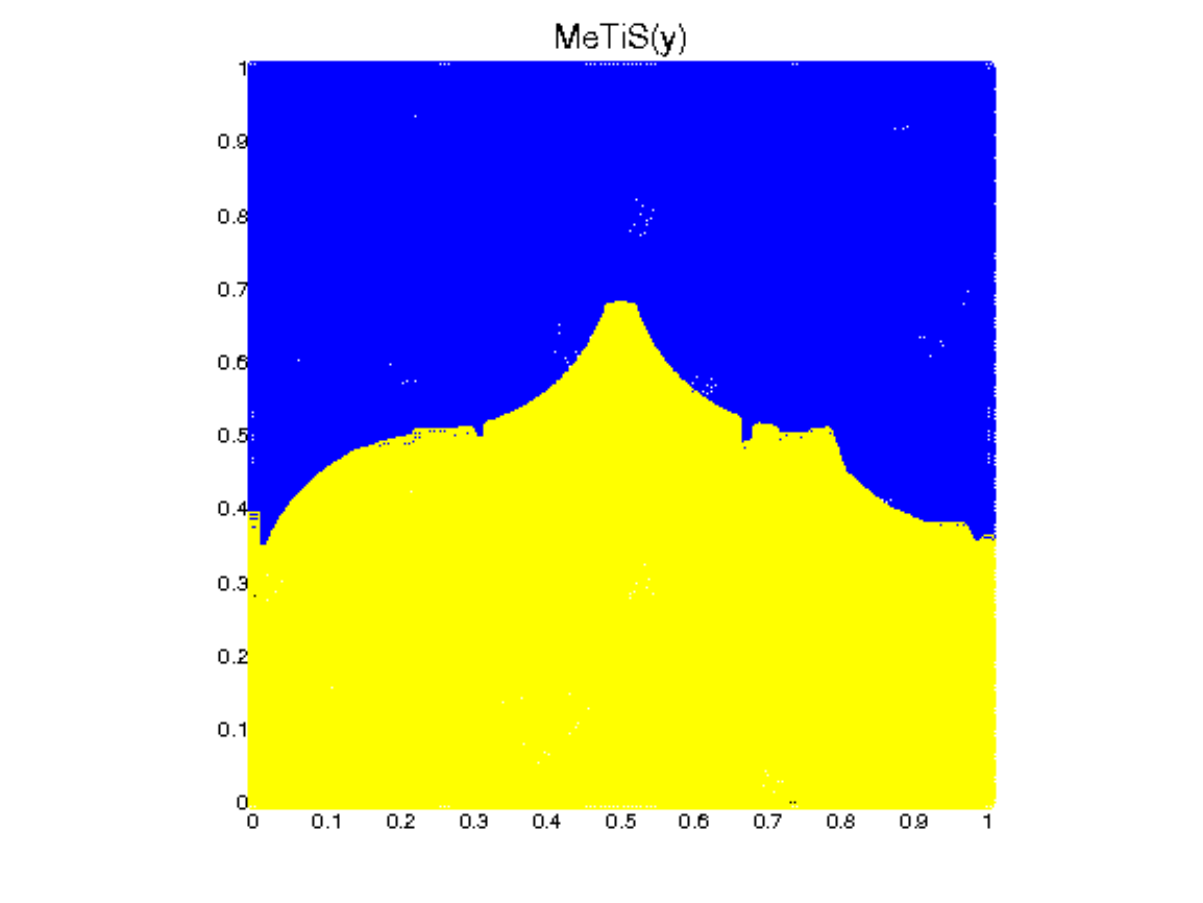} \\
 \includegraphics[width = 6.4cm]{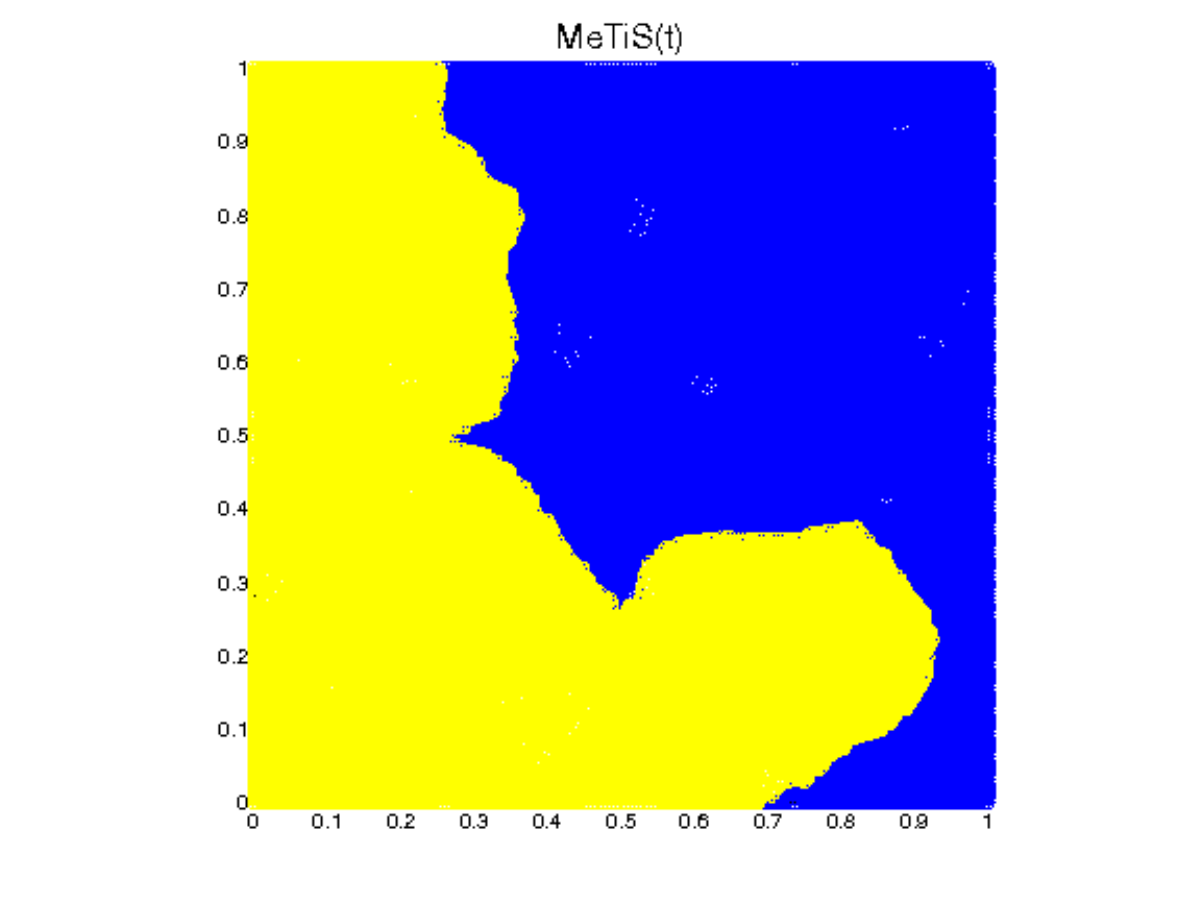}
 \includegraphics[width = 6.4cm]{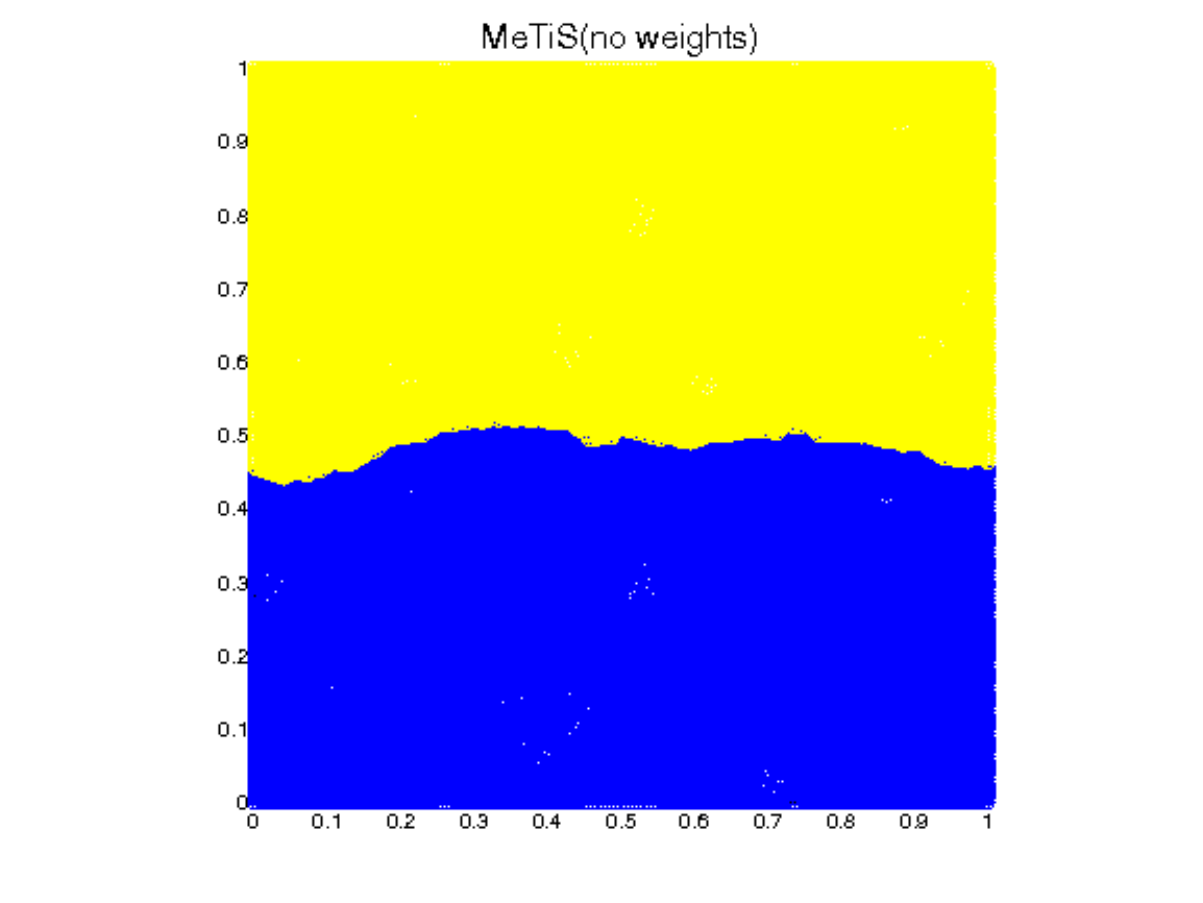}
 \end{center}
 \caption{Bipartitions of unstructured mesh (Figure~\ref{fig:irreg_geom}) for problem~(\ref{eqn:pde}) 
with coefficients in~(\ref{eqn:j_xy_irreg}).}
 \label{fig:irreg_partn}
\end{figure}

\begin{figure}[h]
 \begin{center}
 \includegraphics[width = 6.4cm]{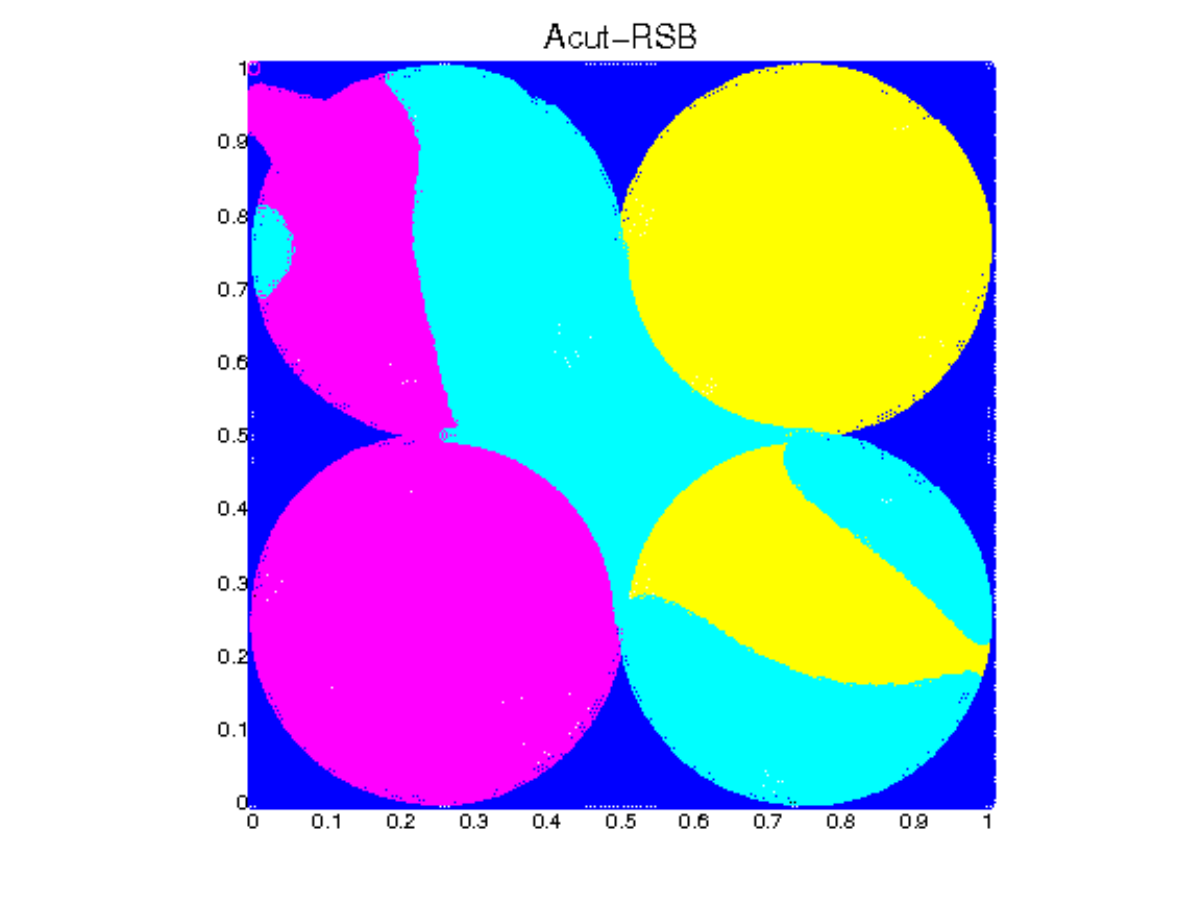}
 \includegraphics[width = 6.4cm]{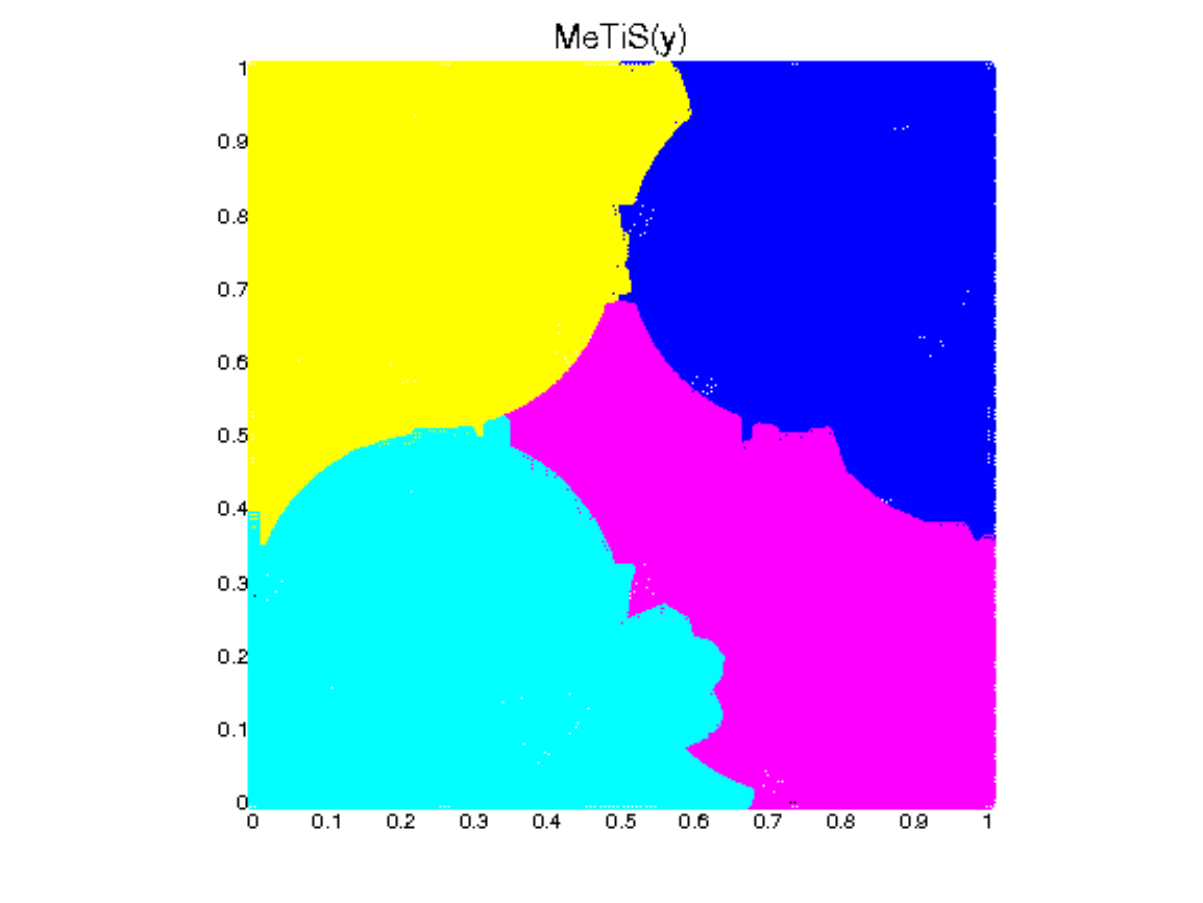} \\
 \includegraphics[width = 6.4cm]{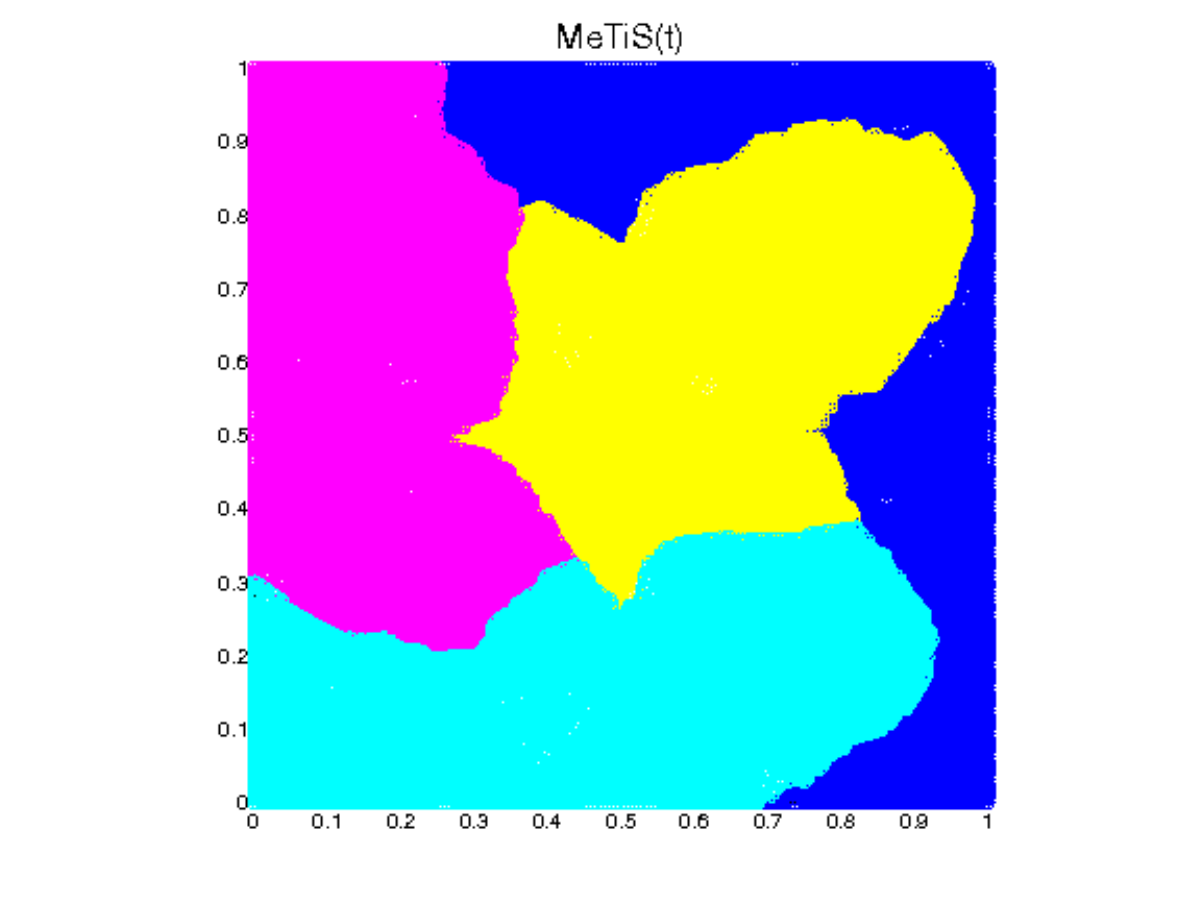}
 \includegraphics[width = 6.4cm]{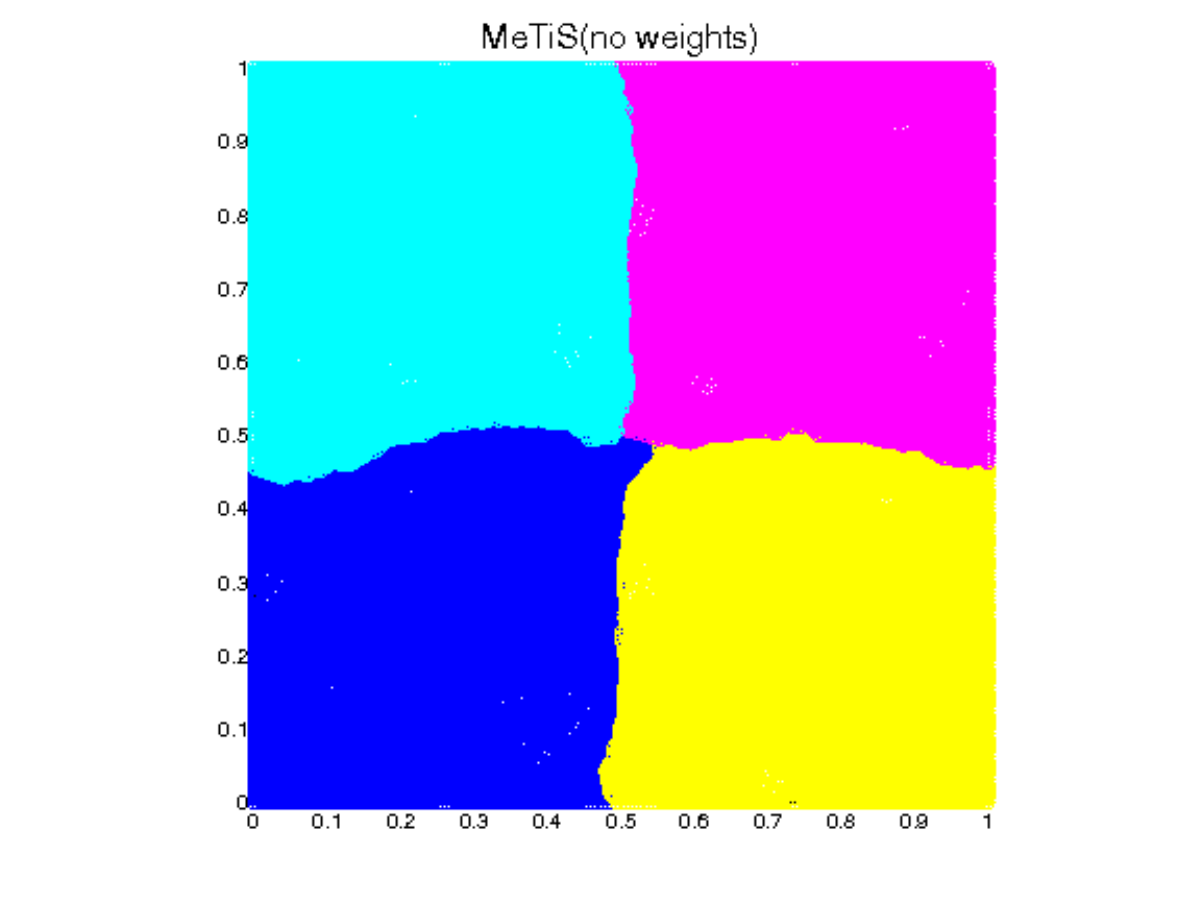}
 \end{center}
 \caption{Partitioning of unstructured mesh (Figure~\ref{fig:irreg_geom}) into 4 subdomains 
for problem~(\ref{eqn:pde}) with coefficients in~(\ref{eqn:j_xy_irreg}).}
 \label{fig:irreg_partn4D}
\end{figure}


\begin{figure}[h]
 \begin{center}
 \includegraphics[width = 6cm]{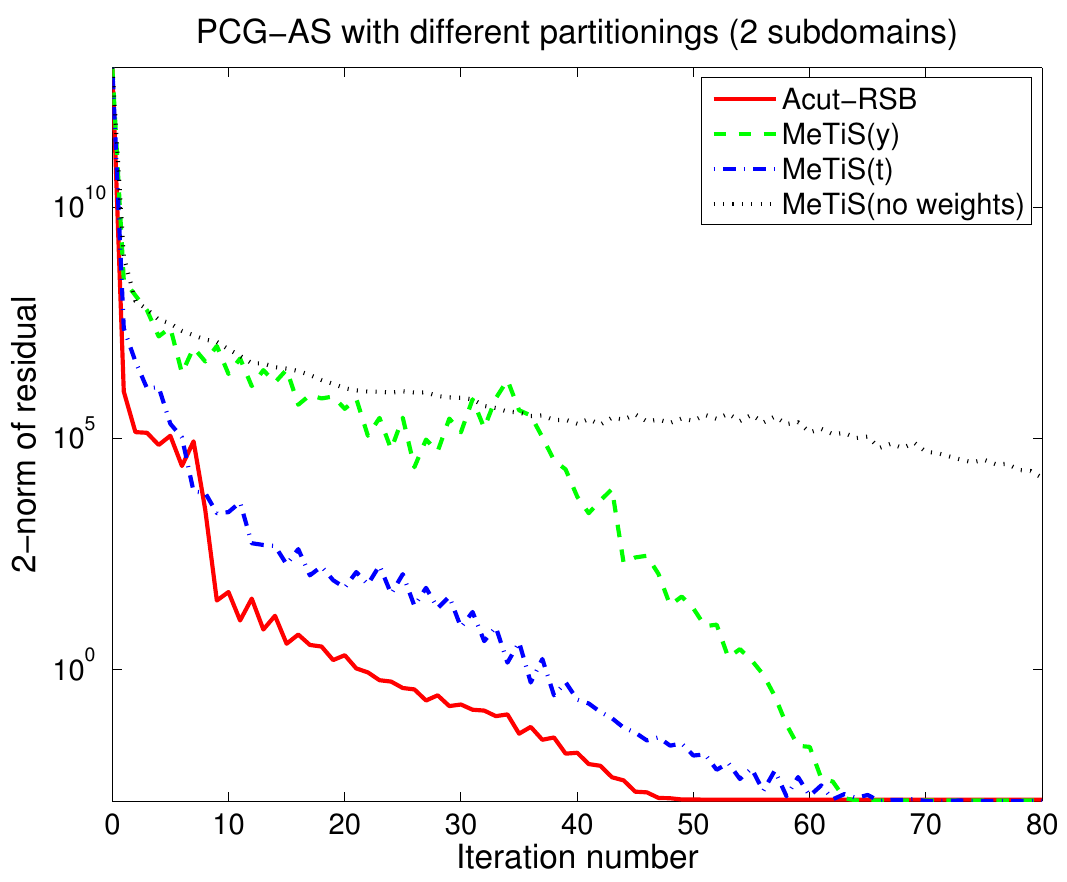}
 \includegraphics[width = 6cm]{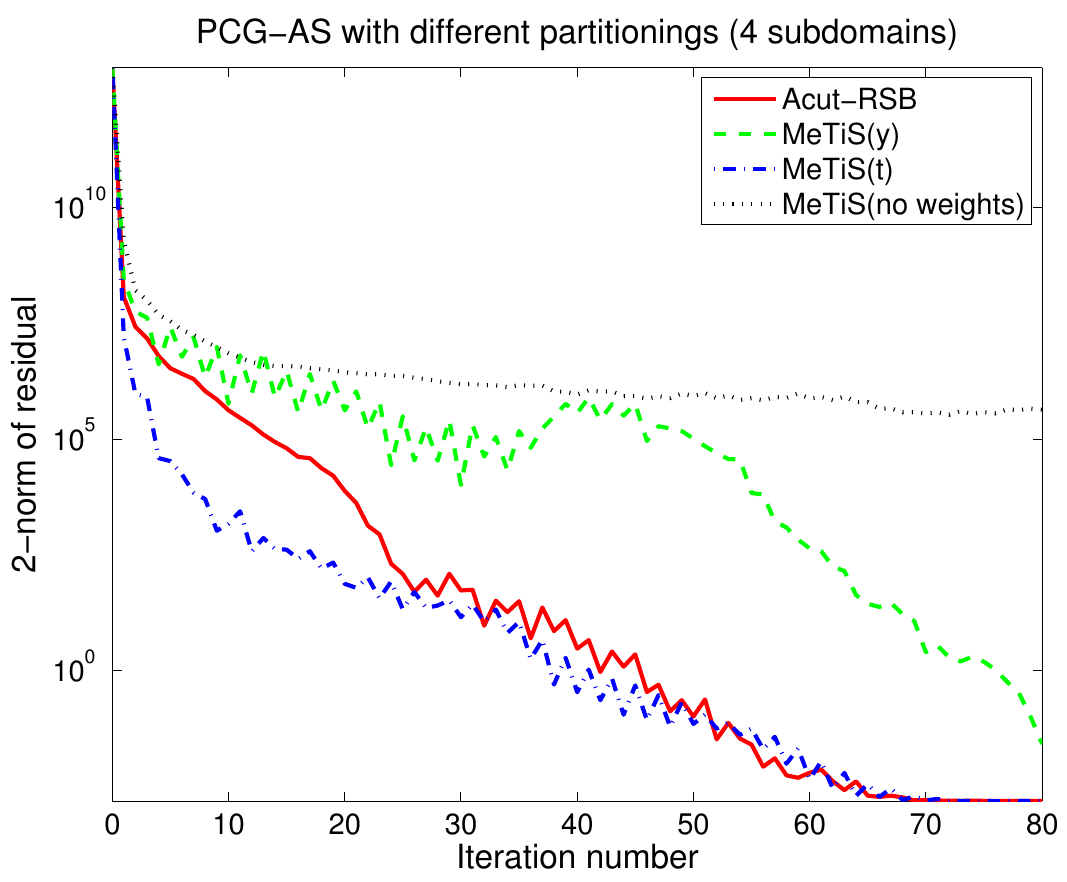}
 \end{center}
 \caption{Convergence of PCG--AS with different partitions for problem~(\ref{eqn:pde})
  with coefficients in~(\ref{eqn:j_xy_irreg}). The domain and the corresponding FE mesh are given
  in Figure~\ref{fig:irreg_geom}. The linear system of size $n = 47,713$ is partitioned into
  2 (left) and 4 (right) subdomains.}
 \label{fig:irreg_cv}
\end{figure}

Figure~\ref{fig:irreg_cv} shows that the use of Acut-RSB reduces the iteration count by
approximately $15\%$ compared to the weighted MeTiS runs for the case of two subdomains,
and gives a result comparable to MeTiS($t$) for four subdomains. Note that in the latter case
the comparable convergence results are produced even though the ``relcoef'' for MeTiS($t$)
is 6 times larger than that of Acut-RSB, as can be seen from Table~\ref{tbl:irreg}.

\begin{table}
\begin{center}
\caption{Relative cut sizes and amounts of the coefficient information discarded to construct preconditioners for
problem~(\ref{eqn:pde}) with coefficients in~(\ref{eqn:j_xy_irreg}); see Figure~\ref{fig:irreg_geom}.}
\label{tbl:irreg}
\begin{tabular}{|c||c|c||c|c|}
\cline{2-5}
\multicolumn{1}{c|}{} & \multicolumn{2}{c||}{2 subdomains} & \multicolumn{2}{c|}{4 subdomains}\tabularnewline
\hline
Partitioning & relcut & relcoef & \multicolumn{1}{c|}{relcut} & relcoef\tabularnewline
\hline
\hline
Acut-RSB & 1.03 & $3 \times 10^{-4}$ & 1.96 & 0.06\tabularnewline
\hline
MeTiS($y$) & 0.46 & 0.10 & 0.85 & 0.16\tabularnewline
\hline
MeTiS($t$) & 0.53 & $5 \times 10^{-4}$ & 1.04 & 0.01\tabularnewline
\hline
MeTiS(no w.) & 0.28 & 0.47 & 0.56 & \multicolumn{1}{c|}{1.24}\tabularnewline
\hline
\end{tabular}
\end{center}
\end{table}

Now, let us ``invert'' the definition of the coefficients, so that
\begin{equation}\label{eqn:j_xy_irreg_bad}
a(x,y) = b(x,y) = \left\{\begin{array}{cl}
	10^7, & \mbox{``inside circles and on circles' boundaries''} \\
	1,   & \mbox{``outside circles''}\; ;
	   \end{array}\right.
\end{equation}
i.e., $a(x,y)$ and $b(x,y)$ are the piecewise constants taking a small value outside of
the four circles and a large value inside. The problem geometry and the mesh are the same as
in Figure~\ref{fig:irreg_geom}. The parameters $\gamma$, $\delta$, and LOBPCG convergence tolerance
remain unchanged. As above, the mesh is partitioned into 2 and 4 subdomains. The results
of bipartitioning are shown in Figure~\ref{fig:irreg_partn_bad}.

\begin{figure}[h]
 \begin{center}
 \includegraphics[width = 6.4cm]{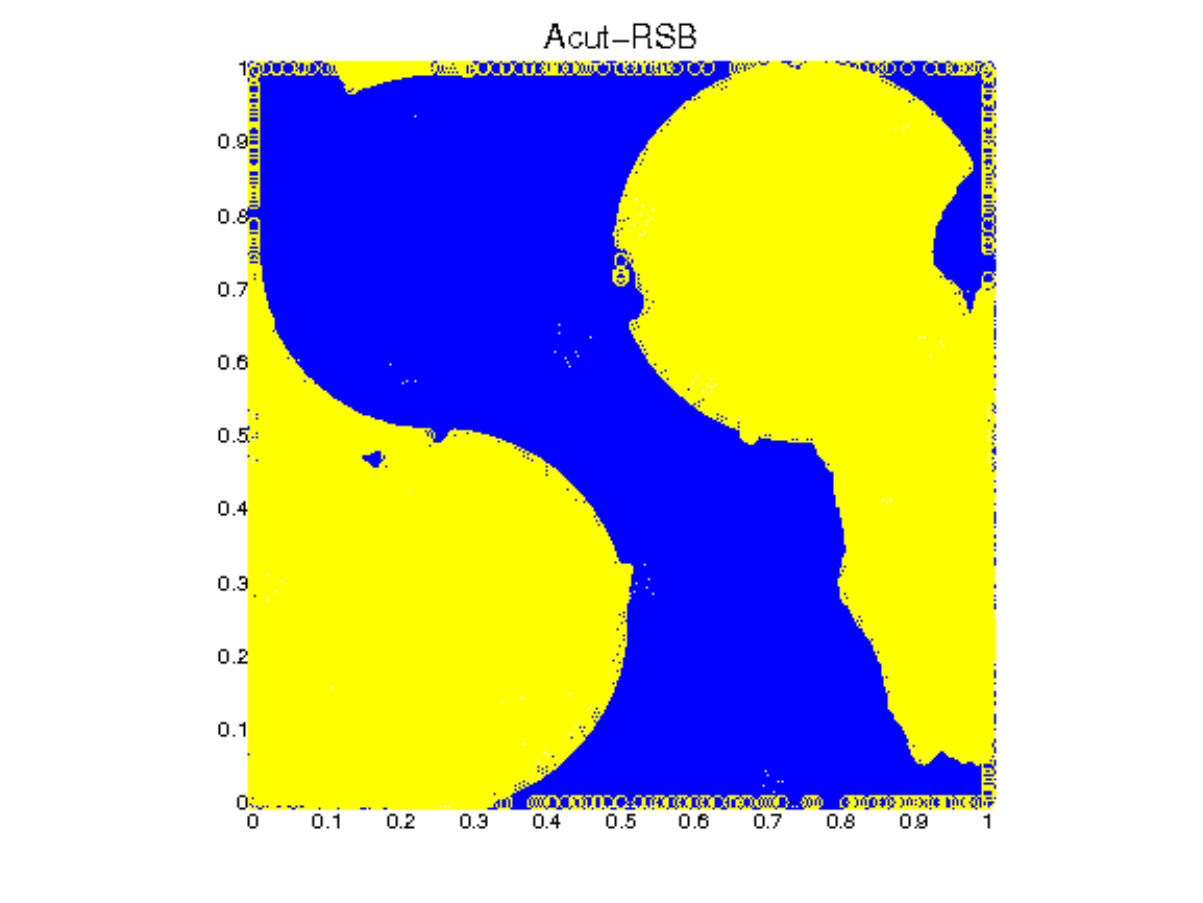}
 \includegraphics[width = 6.4cm]{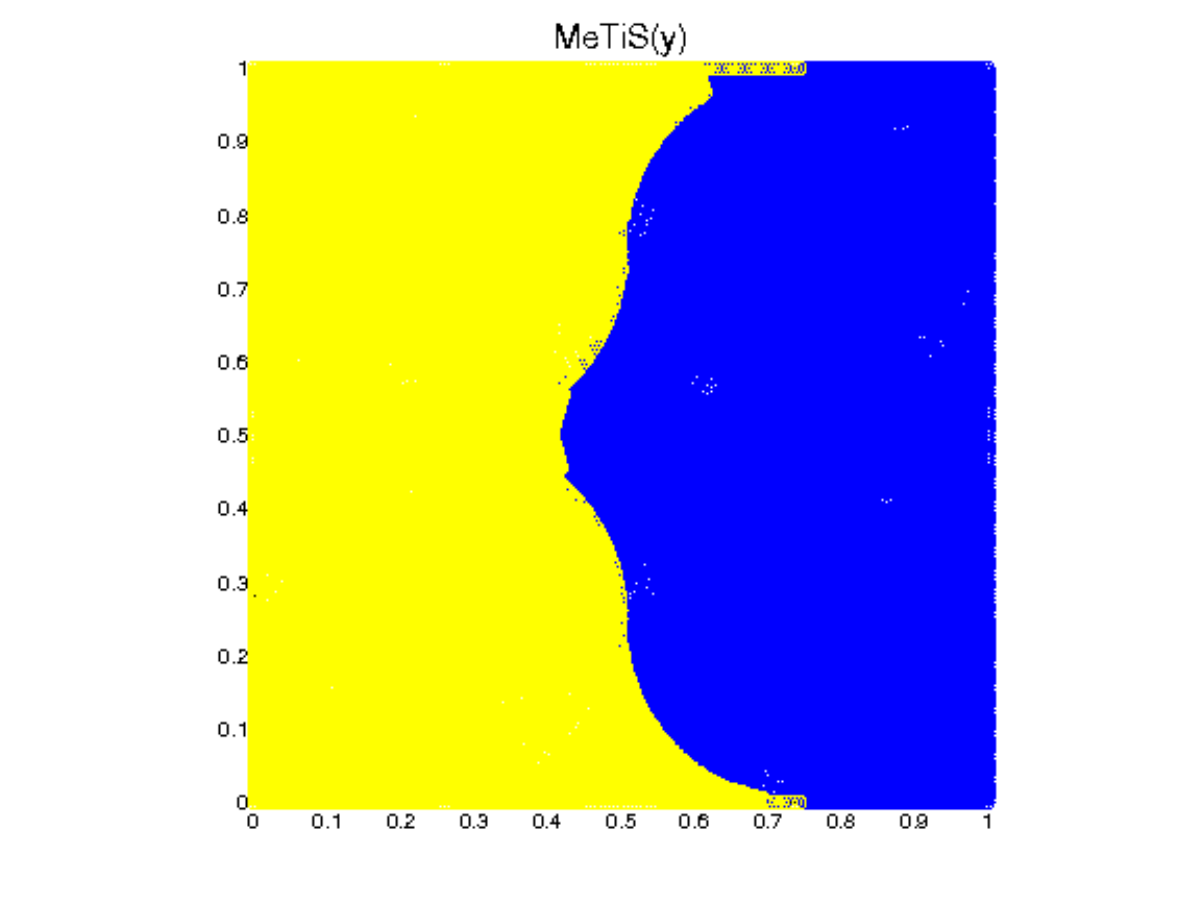} \\
 \includegraphics[width = 6.4cm]{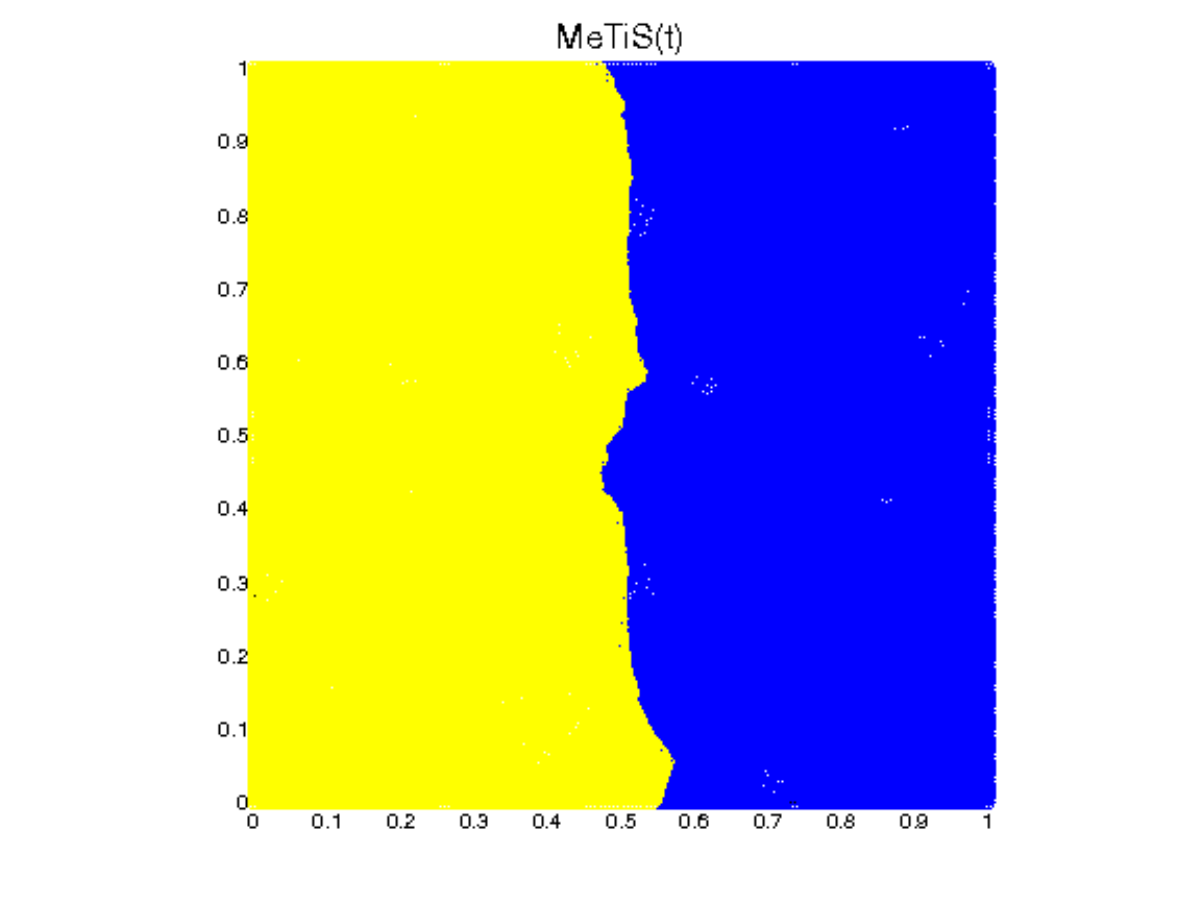}
 \includegraphics[width = 6.4cm]{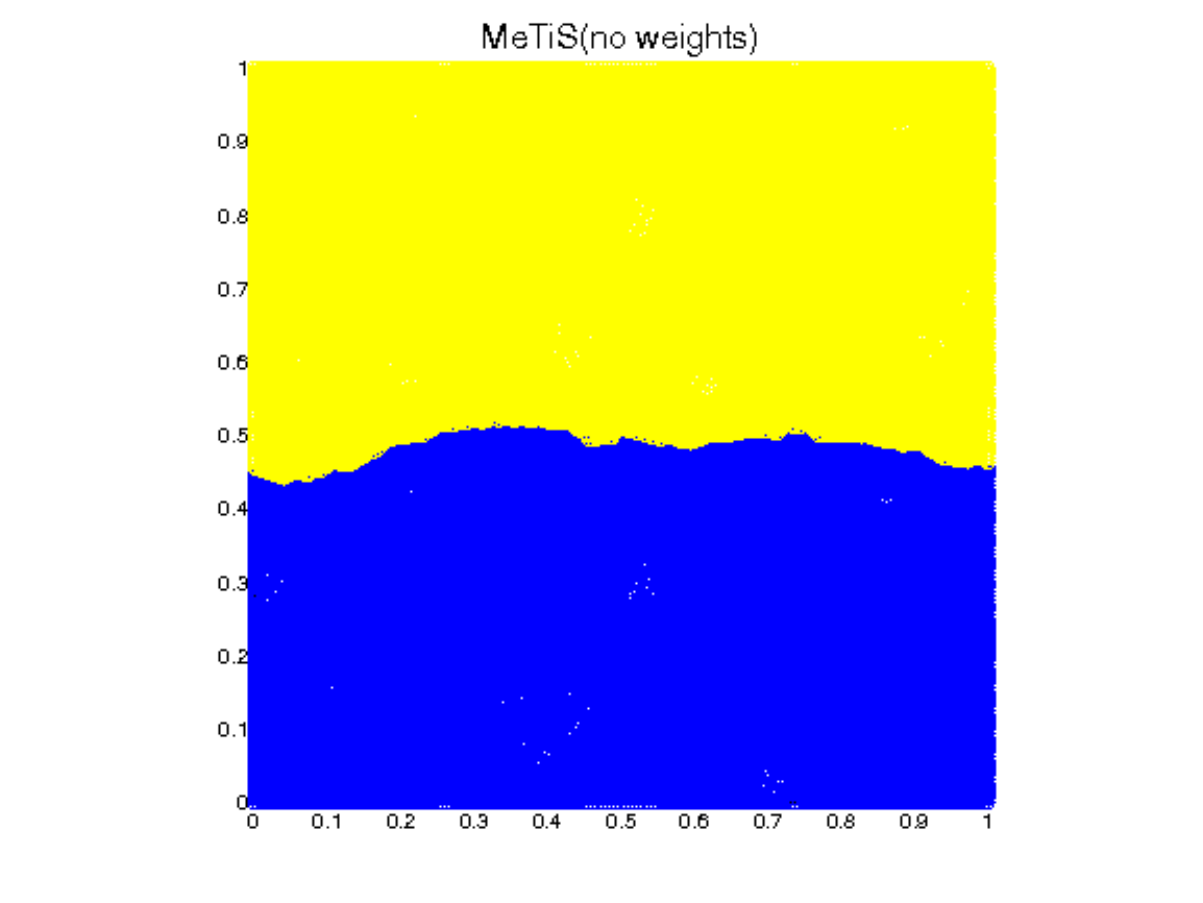}
 \end{center}
 \caption{Bipartitions of unstructured mesh (Figure~\ref{fig:irreg_geom}) for problem~(\ref{eqn:pde}) 
with coefficients in~(\ref{eqn:j_xy_irreg_bad}).}
 \label{fig:irreg_partn_bad}
\end{figure}


\begin{figure}[h]
 \begin{center}
 \includegraphics[width = 6cm]{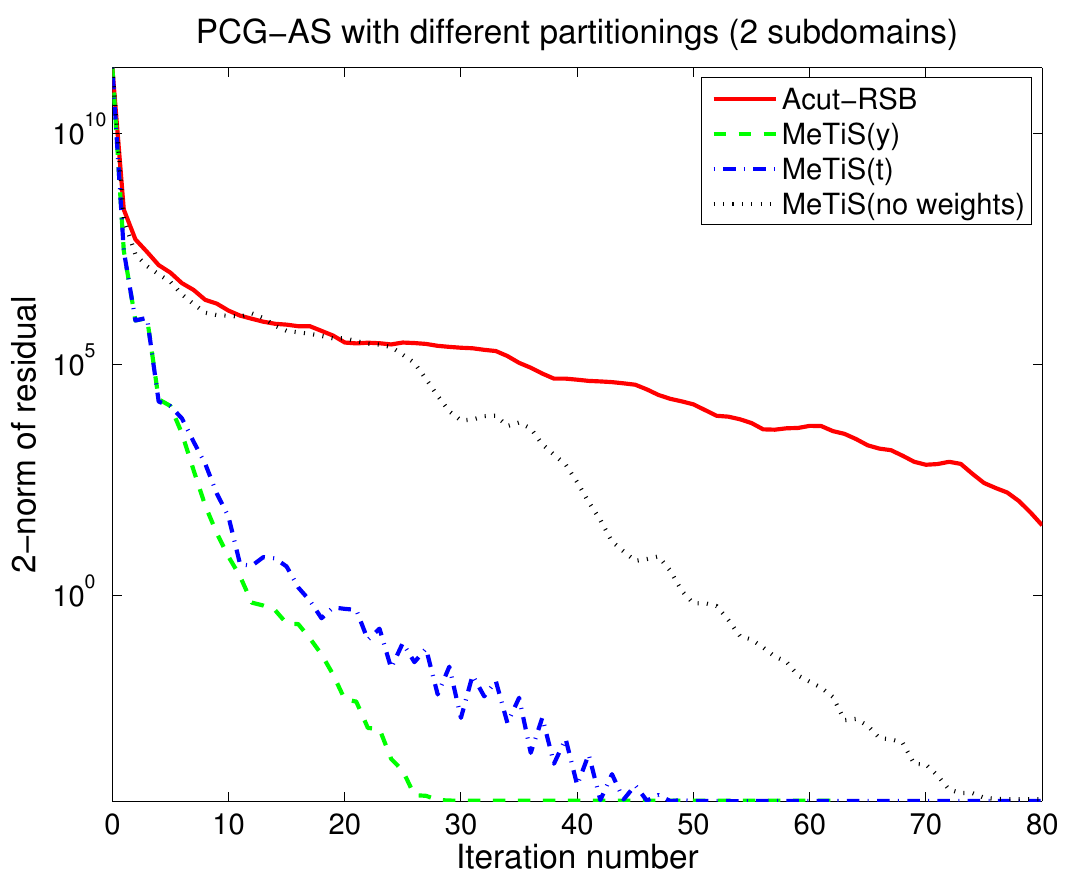}
 \includegraphics[width = 6cm]{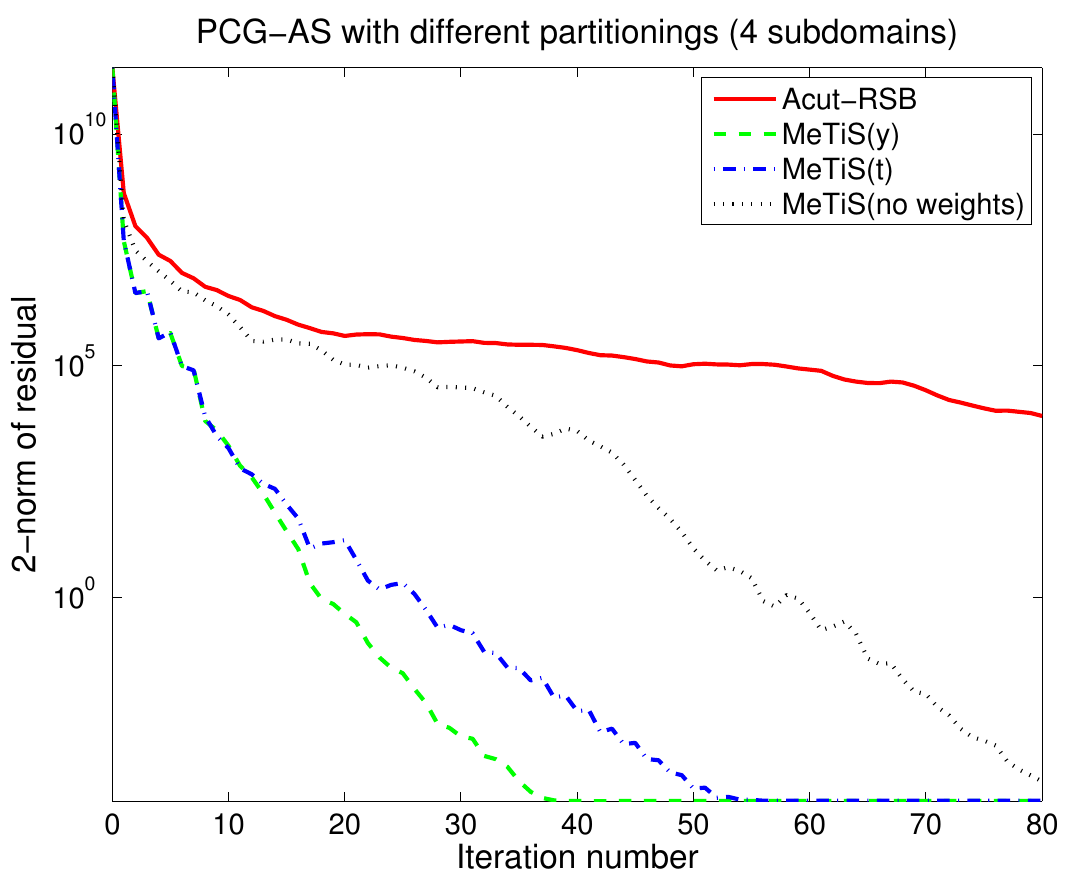}
 \end{center}
 \caption{Convergence of PCG--AS with different partitions for problem~(\ref{eqn:pde})
  with coefficients in~(\ref{eqn:j_xy_irreg_bad}). The domain and the corresponding FE mesh are given
  in Figure~\ref{fig:irreg_geom}. The linear system of size $n = 47,713$ is partitioned into
  2 (left) and 4 (right) subdomains.}
 \label{fig:irreg_cv_bad}
\end{figure}

Figure~\ref{fig:irreg_cv_bad} demonstrates that for the above choice of problem coefficients
Acut-RSB delivers partitions that fail to produce a satisfactory preconditioning quality.
The convergence of the corresponding PCG--AS with Acut-RSB runs is noticeably inferior to that of PCG--AS
with MeTiS partitions.

We explain this poor behavior by presence of a large number
of edges that connect vertices on the circles' boundaries and those outside
the circles; see finer mesh regions along the exterior of the circles' boundaries in
Figure~\ref{fig:irreg_geom} (right). According to~(\ref{eqn:j_xy_irreg_bad}), such vertices belong to subdomains with 
different coefficient magnitudes. Therefore, by definition of weights $w_{ij}$, 
the corresponding edges are the ones targeted by Acut-RSB; see Figure~\ref{fig:irreg_partn_bad} (top left).
As a result, the cuts produced by Acut-RSB turn out to be  significantly
larger (about 5--6 times) than those made by MeTiS; see ``relcut'' in
Table~\ref{tbl:irreg_bad}.
Even though the entries $|a_{ij}|$ corresponding to the edges in Acut are
relatively small, they accumulate into
an excessively large amount of information discarded from $A$ (see ``relcoef''
in Table~\ref{tbl:irreg_bad}),
thereby leading
to the inferior convergence of PCG--AS. 
In other words, in this example, 
minimization of~(\ref{eqn:subopt}) is mainly contributed by the increase of the number of cut 
edges rather than the decrease of their weight,
which appears to hinder the convergence.

\begin{table}
\begin{center}
\caption{Relative cut sizes and amounts of the coefficient information discarded to construct preconditioners for
problem~(\ref{eqn:pde}) with coefficients in~(\ref{eqn:j_xy_irreg_bad}); see Figure~\ref{fig:irreg_geom}.}
\label{tbl:irreg_bad}
\begin{tabular}{|c||c|c||c|c|}
\cline{2-5}
\multicolumn{1}{c|}{} & \multicolumn{2}{c||}{2 subdomains} & \multicolumn{2}{c|}{4 subdomains}\tabularnewline
\hline
Partitioning & relcut & relcoef & \multicolumn{1}{c|}{relcut} & relcoef\tabularnewline
\hline
\hline
Acut-RSB & 1.80 & 0.44 & 3.67 & 1.51 \tabularnewline
\hline
MeTiS($y$) & 0.38 & 0.01 & 0.76 & 0.01\tabularnewline
\hline
MeTiS($t$) & 0.27 & 0.01 & 0.54 & 0.01\tabularnewline
\hline
MeTiS(no w.) & 0.28 & 0.10 & 0.56 & \multicolumn{1}{c|}{0.11}\tabularnewline
\hline
\end{tabular}
\end{center}
\end{table}

In contrast, the number of edges that connect vertices on the circles' boundaries and those inside
the circles is relatively small; see coarser mesh regions along the interior of the 
circles' boundaries in Figure~\ref{fig:irreg_geom} (right). 
If problem coefficients are defined by~(\ref{eqn:j_xy_irreg}), then these are the edges targeted by
Acut-RSB; see Figure~\ref{fig:irreg_partn} (top left). 
However, their number is smaller than that of the edges discraded by the 
(unsuccessful) run of Acut-RSB with coefficients~(\ref{eqn:j_xy_irreg_bad}).
%
Therefore, as reported in Table~\ref{tbl:irreg}, the values of ``relcut'' and ``relcoef''
corresponding to Acut-RSB are not large, 
i.e., minimization of~(\ref{eqn:subopt}) is given by a suitable balance between the 
cut weight and the number of cut edges. As a result,
a 
better
convergence
behavior is observed in Figure~\ref{fig:irreg_cv}.

Finally, let us remark that, unlike in all the previous examples, the best
convergence in Figure~\ref{fig:irreg_cv_bad}  is given by
PCG--AS with MeTiS($y$). In particular, this shows that it is not clear how to \textit{optimally} define MeTiS
weights if the preconditioning quality becomes an objective.
We also note that the values of ``relcoef'' in Table~\ref{tbl:irreg_bad} for MeTiS($y$) and MeTiS($t$)
are essentially the same (up to the sixth decimal digit), while the convergence of the former is
noticeably superior.

\section{Conclusion}
This paper introduces a new approach for partitioning SPD linear systems.
The suggested technique is based on approximating the so-called Acut of the
matrix adjacency graph. The information about matrix coefficients is utilized
through the graph's edge weights.

The resulting partitioning procedure represents a form of RSB,
where each step of the recursion requires solving
a generalized eigenvalue problem that simultaneously
involves weighted and standard graph Laplacians.
It is shown that, for a number of test problems, 
the new partitioning significantly improves the quality of the associated nonoverlapping AS preconditioners, 
compared to MeTiS with several different
weighting schemes. In the context of parallel solution, the observed increase in the robustness
of the iterative method occurs at a price of extra communication overhead.

The new partitioning strategy is shown to be effective for test linear systems with 
large variations in matrix coefficients. We have observed that the quality of 
the result strongly depends on the difference in the magnitude of coefficients, as well as 
(in terms of PDE's) on the problem geometry and mesh structure. Future research should address
the development of practical recommendations on when the proposed partitioning method
is preferable to the existing techniques. Since at the current exploratory stage
the test problems have been relatively small and simple, it is of interest to benchmark 
the partitoning quality and performance of the new approach at a larger scale and for more
complex geometries.


While in this work we have disregarded the requirement on minimizing communication volume,
future research should also address the trade-off  between
preconditioning quality and parallel efficiency.
In particular, we hope that this will lead to the containment of the large cut sizes
produced by Acut. 

This paper considers only the case of nonoverlapping AS (block Jacobi) preconditioners.
However, it is of interest to apply the same partitioning approach for other preconditioning
strategies. For example, our experiments (not reported here) suggest that results
for the \textit{overlapping} AS are similar to the above reported \textit{nonoverlapping} AS.
The overlaps have been introduced by growing a few layers of nodes for each nonoverlapping subdomain.

The new partitioning procedure is heavily rooted in the SPD properties of
 the coefficient matrix. Therefore, it is not clear if successful results
can be obtained for symmetric indefinite or only structurally symmetric matrices.

Finally, we hope that applications of the Acut based partitioning can be found in other
areas of science and engineering, 
 such as data clustering and network
analysis and logistics.

\paragraph{\emph{\textbf{Acknowledgments}}} The authors thank Dr.~Bed\v{r}ich Soused\'{\i}k for sharing
the FE assembling codes to reproduce the 3D linear elasticity example from~\cite{Mandel.Sousedik.Sistek:12},
and Ruipeng Li for assisting with SPARSKIT matrix generation.


\def\refname{\centerline{\footnotesize\rm REFERENCES}}

\end{document}